\def\BQ{{\mathbb Q}}
\def\BZ{{\mathbb Z}}
\def\BC{{\mathbb C}}
\def\BR{{\mathbb R}}
\def\BF{{\mathbb F}}
\def\ib{{\mathbf i}}
\def\Gal{\mathrm{Gal}}
\def\Perm{\mathrm{Perm}}
\def\tr{\mathrm{tr}}
\def\so{\frak{so}}
                                   \def\HH{\mathrm{H}}
\def\Hdg{\mathrm{Hdg}}
\def\End{\mathrm{End}}
\def\Aut{\mathrm{Aut}}
\def\Hom{\mathrm{Hom}}
\def\Mat{\mathrm{Mat}}
\def\fchar{\mathrm{char}}
\def\GL{\mathrm{GL}}
\def\SL{\mathrm{SL}}
\def\Sp{\mathrm{Sp}}
\def\dim{\mathrm{dim}}
                          \def\RR{{\mathfrak R}}
                    \def\sll{\mathfrak{sl}}
\newtheorem{thm}{Theorem}[section]
\newtheorem{lem}[thm]{Lemma}
\newtheorem{cor}[thm]{Corollary}
\newtheorem{prop}[thm]{Proposition}
\theoremstyle{definition}
\newtheorem{defn}[thm]{Definition}
\newtheorem{ex}[thm]{Example}
\newtheorem{sect}[thm]{}
           \newtheorem{rem}[thm]{Remark}
\newtheorem{rems}[thm]{Remarks}
\begin{document}

\title[2-simple Complex Tori] {Endomorphism Algebras and Automorphism Groups of certain Complex Tori}

\author{Yuri  G. Zarhin}
\address{Pennsylvania State University, Department of Mathematics, University Park, PA 16802, USA}
\email{zarhin@math.psu.edu}
\thanks{The  author was partially supported by Simons Foundation Collaboration grant   \# 585711
 and the Travel Support for Mathematicians Grant MPS-TSM-00007756 from the Simons Foundation.
Most  of this work was done in January--May 2022 and December 2023 during his stay at the Max-Planck Institut f\"ur Mathematik (Bonn, Germany), whose hospitality and support are gratefully acknowledged.}

\begin{abstract}
We study the endomorphism algebra, the automorphism  group and the Hodge group of  complex tori, whose second rational cohomology group enjoys a certain Hodge
property introduced by
 F. Campana.

 \end{abstract}

\subjclass[2010]{ 32M05, 32J18, 32J27,   14J50}
\keywords{complex tori, Hodge structures and Hodge groups, endomorphism algebras and automorphism groups}
\maketitle
 \tableofcontents

\section{Introduction}
As usual, we write $\BQ,\BR,\BC$ for the fields of rational, real, complex numbers and $\BZ$ for the ring of integers.
We write $\bar{\BQ}$ for the subfield of all algebraic nunbers in $\BC$, which is an algebraic closure of $\BQ$.
If $p$ is a prime then $\BF_p, \BZ_p,\BQ_p$ stand for the $p$-element (finite) field, the ring of $p$-adic integers, the field of $p$-adic numbers respectively.
If $E$ is a number field of degree $n=[E:\BQ]$ then $r_E$ and $s_E$ are nonnegative integers such that the $\BR$-algebra $E_{\BR}=E\otimes_{\BQ}\BR$
is isomorphic to a product $\BR^{r_E} \times \BC^{s_E}$. (In other words, $r_E$ is the number of ``real'' field embeddings $E \hookrightarrow \BR$ and $2s_E$
is the number of ``imaginary'' field embeddings $E \hookrightarrow \BC$, whose images do {\sl not} lie in $\BR$.) In particular,
\begin{equation}
\label{rsE}
[E:\BQ]=r_E+2s_E.
\end{equation}

Let $X$ be a connected compact complex K\"ahler manifold, 
  $\mathrm{H}^2(X,\BQ)$ its second rational cohomology group equipped with the canonical rational Hodge structure,  i.e., there is the Hodge decomposition
$$\mathrm{H}^2(X,\BQ)\otimes_{\BQ}\BC=\mathrm{H}^2(X,\BC)=\mathrm{H}^{2,0}(X)\oplus \mathrm{H}^{1,1}(X)\oplus \mathrm{H}^{2,0}(X)$$
where 
$\mathrm{H}^{2,0}(X)=\Omega^2(X)$ is the space of holomorphic $2$-forms on $X$,
$\mathrm{H}^{0,2}(X)$ is the ``complex-conjugate'' of $\mathrm{H}^{2,0}(X)$ and 
$\mathrm{H}^{1,1}(X)$ coincides with its own ``complex-conjugate'' (see 
\cite[Sections 2.1--2.2]{DeligneH2}, \cite[Ch. VI-VII]{V})).
 The following property of $X$ was introduced and studied by 
F. Campana \cite[Definition 3.3]{Campana}.    (Recently, it was used in the study of coisotropic and lagrangian submanifolds of symplectic manifolds \cite{AC}.)

\begin{defn}
\label{CampDef}
A  manifold $X$ is {\sl irreducible in weight 2} (irr\'eductible en poids 2) if it
enjoys the following property.

Let $H$ be a rational Hodge substructure of $\mathrm{H}^2(X,\BQ)$ such that
$$H_{\BC}\cap \mathrm{H}^{2,0}(X)\ne \{0\}$$
where $H_{\BC}:=H\otimes_{\BQ}\BC$.

Then $H_{\BC}$ contains the whole $\mathrm{H}^{2,0}(X)$.
\end{defn}

Our aim  is to study  complex tori $T$ that enjoy this property. Namely, we discuss their endomorphism algebras, automorphism groups and Hodge groups.

Let $T=V/\Lambda$ be a complex torus of positive dimension $g$ where $V$  is a $g$-dimensional complex vector space, and $\Lambda$ is a discrete lattice of rank $2g$ in $V$.
 One may naturally identify $\Lambda$ with the first integral homology group $\mathrm{H}_1(T,\BZ)$ of $T$ and
$$\Lambda_{\BQ}=\Lambda\otimes \BQ=\{v \in V \mid \exists n\in \BZ\setminus \{0\} \ \text{ such that } nv \in \Lambda\}$$
with the first rational homology group $\mathrm{H}_1(T,\BQ)$ of $T$. 
 There are also natural isomorphisms of real vector spaces
 $$\Lambda \otimes \BR =\Lambda_{\BQ}\otimes_{\BQ}\BR \to V, \ \lambda\otimes r \mapsto r\lambda$$
 that may be viewed as isomorphisms related to the first real cohomology group $\mathrm{H}_1(T,\BR)$ of $T$:
 $$\mathrm{H}_1(T,\BR)=\mathrm{H}_1(T,\BZ)\otimes\BR =\mathrm{H}_1(T,\BQ)\otimes_{\BQ}\BR \to V.$$
 In particular, there is a canonical isomorphism of real vector spaces
 \begin{equation}
 \label{VR}
  \mathrm{H}_1(T,\BR)=V,
  \end{equation}
  and a canonical isomorphism of complex vector spaces
  \begin{equation}
 \label{VC}
  \mathrm{H}_1(T,\BC) =\mathrm{H}_1(T,\BQ)\otimes_{\BQ}\BC=\mathrm{H}_1(T,\BR)\otimes_{\BR}\BC=V\otimes_{\BR}\BC=:V_{\BC}
  \end{equation}
  where $ \mathrm{H}_1(T,\BC)$ is the first complex homology group of $T$.
  
There are natural isomorphisms of $\BR$-algebras
$$\End_{\BZ}(\Lambda)\otimes \BR \cong \End_{\BR}(V), \  u\otimes r\mapsto ru,$$
$$\End_{\BQ}(\Lambda_{\BQ})\otimes \BR \cong \End_{\BR}(V), \  u\otimes r\mapsto ru,$$
which give rise to the natural ring embeddings
\begin{equation}
\label{embedZQR}
\End_{\BZ}(\Lambda)\subset \End_{\BQ}(\Lambda_{\BQ})\subset \End_{\BR}(V)\subset  \End_{\BR}(V)\otimes_{\BR}\BC=\End_{\BC}(V_{\BC}).
\end{equation}
Here the structure of a $2g$-dimensional {\sl complex} vector space on $V_{\BC}$ is defined by
 $$z (v\otimes s)=v\otimes zs \quad \forall v\otimes s \in V\otimes_{\BR}\BC=V_{\BC},  \ z \in \BC.$$
 If $u \in \End_{\BR}(V)$ then we write $u_{\BC}$ for the corresponding $\BC$-linear operator in $V_{\BC}$, i.e.,
 \begin{equation}
 \label{notationAbuse}
 u_{\BC}(v\otimes z)=u(v)\otimes z \quad \forall u \in V, z \in \BC, v\otimes z\in V_{\BC}.
 \end{equation}
 \begin{rem}
 \label{notAB}
 Sometimes, we will  identify $\End_{\BR}(V)$ with its image 
 $\End_{\BR}(V)\otimes 1\subset \End_{\BC}(V_{\BC})$ and write $u$ instead of $u_{\BC}$, slightly abusing notation.
 \end{rem}
 
 As usual, one may naturally extend the complex conjugation $z \mapsto \bar{z}$ on $\BC$ to the $\BC$-antilinear involution
 $$V_{\BC} \to V_{\BC},  \ w \mapsto \bar{w}, \
 v\otimes z \mapsto \overline{v\otimes z}=v \otimes \bar{z},$$
 which is usually called the complex conjugation on $V_{\BC}$. Clearly, 
 \begin{equation}
 \label{uBar}
 u_{\BC}(\bar{w})=\overline{u(w)} \quad \forall u \in  \End_{\BR}(V), w \in V_{\BC}.
 \end{equation}
 This implies easily that the set of fixed points of the involution is
 $$V=V\otimes 1 \subset V_{\BC}.$$

Let $\End(T)$ be the endomorphism ring of the complex commutative Lie group $T$ and
$\End^0(T)=\End(T)\otimes\BQ$ the corresponding endomorphism algebra, which is a finite-dimensional algebra over 
$\BQ$,
see \cite{OZ,BL,BZ}.
There are well known canonical isomorphisms
$$\End(T)=\End_{\BZ}(\Lambda)\cap  \End_{\BC}(V), \quad
\End^{0}(T)=\End_{\BQ}(\Lambda_{\BQ})\cap  \End_{\BC}(V).$$

Let  $g \ge 2$ and $$\mathrm{H}^2(T,\BQ)=\wedge^2_{\BQ}(\Lambda_{\BQ},\BQ)$$ be the {\sl second rational cohomology group} of $T$,
which carries a natural  rational Hodge structure of weight two:
 $$\mathrm{H}^2(T,\BC)= \mathrm{H}^2(T,\BQ)\otimes_{\BQ}\BC=\mathrm{H}^{2,0}(T)\oplus \mathrm{H}^{1,1}(T)\oplus \mathrm{H}^{0,2}(T)$$
 where $\mathrm{H}^{2,0}(T)=\Omega^2(T)$ is the $g(g-1)/2$-dimensional space of holomorphic $2$-forms on $T$.

\begin{defn}
\label{simple2}
Let $g=\dim(T)\ge 2$. We say that $T$ is {\sl $2$-simple} if it is {\sl irreducible in weight 2}, i.e.,
enjoys the following property.

Let $H$ be a rational Hodge substructure of $\mathrm{H}^2(T,\BQ)$ such that
$$H_{\BC}\cap \mathrm{H}^{2,0}(T)\ne \{0\}$$
where $H_{\BC}:=H\otimes_{\BQ}\BC$.

Then $H_{\BC}$ contains the whole $\mathrm{H}^{2,0}(T)$.
\end{defn}

\begin{rem}
We call such  complex tori $2$-simple, because they are simple in the usual meaning of this word if $g>2$, see Theorem \ref{main}(i) below.

\end{rem}

\begin{ex} (See \cite[Example 3.4(2)]{Campana}.)
If $g=2$ then $\dim_{\BC}(\mathrm{H}^{2,0}(T))=1$. This implies that (in the notation of Definition 
\ref{simple2}) if $H_{\BC}\cap \mathrm{H}^{2,0}(T)\ne \{0\}$ then $H_{\BC}$ contains the whole
$\mathrm{H}^{2,0}(T)$. Hence, every $2$-dimensional complex torus is $2$-simple.
\end{ex}

In what follows we write $\Aut(T)=\End(T)^{*}$ for the automorphism group of the complex Lie group $T$.
We will need the following well known definition.

\begin{defn} A number field $E$ is called {\sl primitive} if either $E=\BQ$ or
the only proper subfield of $E$ is $\BQ$.
\end{defn}

Our main result is the following assertion.

\begin{thm}
\label{main}
Let $T$ be a complex torus of dimension $g\ge 3$.  Suppose that $T$ is $2$-simple.

Then $T$  enjoys the following properties.

\begin{itemize}
\item[(i)]
$T$ is simple.
\item[(ii)]
If $E$ is any subfield of $\End^0(T)$ then it is a number field, whose degree over $\BQ$
is either $1$  or $g$ or $2g$.
\item[(iii)]

 $\End^0(T)$ is a number field, whose degree over $\BQ$
is either $1$  or $g$ or $2g$.

\item[(iv)]
If   $[\End^0(T):\BQ]=1$ then
$$\End^0(T)=\BQ,  \End(T)=\BZ, \ \Aut(T)=\{\pm 1\}.$$

\item[(v)] If $E=\End^0(T)$ and
 $[E:\BQ]=2g$ then $E$ is a purely imaginary number field and   $\Aut(T)\cong\{\pm 1\}\times \BZ^{g-1}$.
In addition,  if $E$ is not primitive then it contains precisely one proper subfield except $\BQ$, and this subfield has degree $g$ over $\BQ$.

\item[(vi)] If $E=\End^0(T)$ and $[E:\BQ]=g$ then  $E$ is a primitive number field and
$\Aut(T) \cong \BZ^{d} \times \{\pm 1\}$ where the positive  integer $d$ equals $r_E+s_E-1$.
In particular,  
$$\frac{1}{2} \le \frac{g}{2}-1 \le d \le g-1.$$
In addition, if $T$ is a complex abelian variety then $E$ is a primitive totally real number field and $d=g-1$.
\end{itemize}
\end{thm}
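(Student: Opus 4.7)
The plan is to prove (i) first, then exploit the resulting simplicity of $T$ to pin down the structure in (ii)--(iii), and finally do unit-theoretic bookkeeping for (iv)--(vi). For (i) I argue by contrapositive: if $T$ is isogenous to $T_1\times T_2$ with $g_i=\dim T_i\ge 1$, the splitting
$$\mathrm{H}^2(T,\BQ)=\wedge^2\mathrm{H}^1(T_1,\BQ)\;\oplus\;(\mathrm{H}^1(T_1,\BQ)\otimes\mathrm{H}^1(T_2,\BQ))\;\oplus\;\wedge^2\mathrm{H}^1(T_2,\BQ)$$
yields three sub Hodge structures whose $(2,0)$-dimensions are $\binom{g_1}{2},\,g_1g_2,\,\binom{g_2}{2}$. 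For $g\ge 3$ with $g_1,g_2\ge 1$, at least two of these are strictly positive, yet none equals the total $\binom{g}{2}$, which violates 2-simplicity.

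The heart of the argument is (ii)--(iii). For a subfield $E\subseteq\End^0(T)$, decompose $W_{\BC}=\bigoplus_{\sigma\in\Sigma}W^{\BC}_{\sigma}$ of $W=\mathrm{H}^1(T,\BQ)$ under $E\otimes_{\BQ}\BC=\prod_{\sigma}\BC$, where $\Sigma=\Hom(E,\bar{\BQ})$, and set $d_\sigma=\dim_{\BC}(W^{1,0}\cap W^{\BC}_\sigma)$, so that $d_\sigma+d_{\bar\sigma}=m:=2g/[E:\BQ]$ and $\sum_\sigma d_\sigma=g$. The multiplicative action $e\mapsto\wedge^2 T_e$ cuts $\wedge^2 W_{\BC}$ into Galois-orbit Hodge substructures $H_O$ whose $(2,0)$-contribution is $\sum_{\sigma\in O}\binom{d_\sigma}{2}$ for the diagonal orbit and $\sum_{\{\sigma,\tau\}\in O}d_\sigma d_\tau$ for off-diagonal orbits. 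By 2-simplicity exactly one orbit carries all of $\mathrm{H}^{2,0}(T)$ while the others contribute zero. The diagonal case, via convexity of $\binom{\cdot}{2}$ subject to $\sum d_\sigma=g$, forces $d_{\sigma^*}=g$ for a single $\sigma^*$ and $d_\sigma=0$ otherwise; this leads to $[E:\BQ]\in\{1,2\}$, the latter making $T$ isogenous to a power of an elliptic curve and contradicting (i). The off-diagonal case forces $d_\sigma\in\{0,1\}$ with $|\{\sigma:d_\sigma=1\}|=g$, whence $m\in\{1,2\}$ and $[E:\BQ]\in\{g,2g\}$; moreover $m=1$ admits no real embedding, so $E$ is purely imaginary. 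This proves (ii). For (iii), combine (ii) with $\dim_{\BQ}\End^0(T)\mid 2g$ (since $\Lambda_{\BQ}$ is a free module over the division algebra $\End^0(T)$, thanks to (i)) and the identity $[D:Z(D)]=[K:Z(D)]^2$ for a maximal subfield $K\subseteq D$; this eliminates every non-commutative possibility.

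For (iv)--(vi), $\End(T)$ is an order in the number field $E=\End^0(T)$, and Dirichlet gives $\Aut(T)=\End(T)^{\times}\cong \mu(\End(T))\times \BZ^{r_E+s_E-1}$. To reduce the torsion to $\{\pm 1\}$ in (v), I would argue that a root of unity $\zeta_n\in E$ of order $n>2$ would give a subfield $\BQ(\zeta_n)\subseteq E$ of degree $\phi(n)\in\{1,g,2g\}$ containing the totally real $\BQ(\zeta_n+\zeta_n^{-1})$ of degree $\phi(n)/2$, and verify that for $g\ge 3$ no such $n$ is compatible with (ii). The rank is $g-1$ in (v) (since $r_E=0,s_E=g$) and $d=r_E+s_E-1$ in (vi), with $g/2-1\le d\le g-1$ immediate from $r_E+2s_E=g$. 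Primitivity of $E$ in (vi) is automatic from (ii): a proper subfield $E'\subseteq E$ has $[E':\BQ]\mid g$ and $[E':\BQ]\in\{1,g,2g\}$, hence $E'=\BQ$. Uniqueness of the intermediate field in (v) follows by linear disjointness: two distinct degree-$g$ subfields of $E$ would generate a compositum of degree $g^2>2g$, impossible. In the abelian-variety case of (vi), Albert's classification together with positivity of the Rosati involution forces $E$ to be totally real (a CM structure would require a totally real subfield of degree $g/2$, excluded by (ii) for $g\ge 3$), whence $s_E=0$ and $d=g-1$. The main obstacles I expect are verifying cleanly that the Galois-orbit decomposition is fine enough to force the $d_\sigma\le 1$ dichotomy (and that no stray Hodge substructure bypasses it), and pinning the torsion of $\Aut(T)$ in (v) via the cyclotomic-subfield exclusion.
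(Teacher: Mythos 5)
Your treatment of the core of (ii)--(iv) and (vi) is essentially the paper's argument: you decompose $\mathrm{H}_1(T,\BC)$ into eigenspaces under $E\otimes_{\BQ}\BC$, count the $(2,0)$-part of the ``diagonal'' Hodge substructure as $\sum_\sigma\binom{d_\sigma}{2}$, exclude the imaginary quadratic case via $\End^0(T)\cong\Mat_g(E)$, and run the central-division-algebra computation for (iii); your finer Galois-orbit decomposition of the off-diagonal part is harmless but unnecessary (the paper only needs the diagonal part, which is rational because it is cut out by $\phi(ux,y)=\phi(x,uy)$). However, there is a genuine gap in (i): the negation of simplicity for a complex torus is the existence of a nonzero proper subtorus $S\subset T$, \emph{not} an isogeny $T\sim T_1\times T_2$ --- Poincar\'e complete reducibility fails for complex tori, so the three-term splitting of $\mathrm{H}^2(T,\BQ)$ you invoke does not exist in general, and your contrapositive addresses the wrong statement. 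The paper instead uses two different rational Hodge substructures attached to $0\to S\to T\to T/S\to 0$ with $d=\dim S$: the image of the injection $\mathrm{H}^2(T/S,\BQ)\hookrightarrow\mathrm{H}^2(T,\BQ)$, whose $(2,0)$-part has dimension $\binom{g-d}{2}<\binom{g}{2}$ and hence must vanish, forcing $g-d=1$; and the kernel of the surjection $\mathrm{H}^2(T,\BQ)\twoheadrightarrow\mathrm{H}^2(S,\BQ)$, whose $(2,0)$-part has dimension $\binom{g}{2}-\binom{d}{2}>0$ and hence must be all of $\mathrm{H}^{2,0}(T)$, forcing $d=1$; together these give $g=2$, a contradiction.

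Two smaller gaps occur in (v). For the torsion of $\Aut(T)$, your route via the totally real subfield $\BQ(\zeta_n+\zeta_n^{-1})$ of degree $\phi(n)/2$ does not close the case $\phi(n)=2g$, where that subfield has the \emph{allowed} degree $g$; the paper instead notes that a root of unity of order $>2$ forces either $\BQ(\sqrt{-1})\subset E$ or $\BQ(\zeta_p)\subset E$ with $\BQ(\zeta_p)\supset\BQ(\sqrt{\pm p})$, i.e.\ a quadratic subfield, and degree $2$ is forbidden by (ii) once $g\ge 3$. For the uniqueness of the proper intermediate field, your linear-disjointness claim is false in general: two distinct degree-$g$ subfields of a degree-$2g$ field need not generate a compositum of degree $g^2$ (e.g.\ $\BQ(2^{1/4})$ and $\BQ(\sqrt{-1}\cdot 2^{1/4})$ inside the degree-$8$ field $\BQ(2^{1/4},\sqrt{-1})$). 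The paper's proof is Galois-theoretic: the quadratic extensions $E/E_1$ and $E/E_2$ give two distinct involutions in $\Aut(E/\BQ)$, so $M=|\Aut(E/\BQ)|>2$; then (ii) applied to the fixed field of $\Aut(E/\BQ)$ forces $M=2g$, and applying (ii) to fixed fields of suitable subgroups (an element of odd prime order, or an index-two subgroup of the resulting $2$-group) always produces a subfield of forbidden degree.
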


\begin{rem} 
\label{translation}
\begin{itemize}
\item[(i)]
It is well known (and can be easily checked) that $T$ is {\sl simple} if and only if the rational Hodge structure on $\Lambda_{\BQ}=\mathrm{H}_1(T,\BQ)$  is {\sl irreducible}.\footnote{A rational Hodge structure $H$ is called {\sl irreducible} or {\sl simple} if its only rational Hodge substructures are $H$ itself and $\{0\}$ \cite[Sect. 2.2]{Charles}.}
\item[(ii)]

We may view $\mathrm{H}^2(T,\BQ)$ as the $\BQ$-vector subspace $\HH^2(T,\BQ)\otimes 1$
 of $\HH^2(T,\BQ)\otimes_{\BQ}\BC=\HH^2(T,\BC)$.
Let us consider the $\BQ$-vector (sub)space 
$$\HH^{1,1}(T,\BQ):=\HH^2(T,\BQ)\cap \HH^{1,1}(T)$$
of $2$-dimensional {\sl Hodge cycles} on $T$.    Notice that the {\sl irreducibility}
of the rational Hodge structure on $\Lambda_{\BQ}$ implies the {\sl complete reducibility}
\footnote{A rational Hodge structure is called completely reducible it it splits into a direct sum of simple rational Hodge structures.}
 of the rational Hodge structure
on $\HH^2(T,\BQ)=\Hom_{\BQ}\left(\wedge^2_{\BQ}\Lambda_{\BQ},\BQ\right)$.  (It follows from the reductiveness of the Mumford-Tate group of a simple torus \cite[Sect. 2.2]{Charles}.)
In light of (i) and Theorem \ref{main}(i), a complex torus $T$ of dimension $>2$
is $2$-simple if and only if it is simple and $\HH^2(T,\BQ)$ splits into a direct sum of $\HH^{1,1}(T,\BQ)$ and an {\sl irreducible} rational Hodge substructure.
\end{itemize}
\end{rem}

\begin{thm}
\label{existRealT}
Let $g \ge 3$ be an integer. Let $\mathbf{r},\mathbf{s}$ be nonnegative integers such that
$$\mathbf{r}+2\mathbf{s}=g.$$

Then there exists a $2$-simple torus $T$ of degree $g$ that enjoys the following properties.

The endomorphism algebra $\End^0(T)$ is a number field $E$ such that
$$[E:\BQ]=g, \ r_E= \mathbf{r}, \  s_E= \mathbf{s}.$$

In particular, if $d$ is an integer such that
$$\frac{g}{2}-1 \le d \le g-1$$
then there exists a $g$-dimensional $2$-simple complex torus $T$ such that
$$\Aut(T) \cong \BZ^{d} \times \{\pm 1\}.$$ 
\end{thm}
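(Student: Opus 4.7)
The ``in particular'' claim follows from Theorem~\ref{main}(vi) applied to the $T$ we are about to construct: with $E = \End^0(T)$ of degree $g$ and signature $(\mathbf{r}, \mathbf{s})$, we get $d = r_E + s_E - 1 = \mathbf{r} + \mathbf{s} - 1 = g - \mathbf{s} - 1$, which runs through every integer in $[g/2-1,\, g-1]$ as $\mathbf{s}$ varies in $\{0, 1, \ldots, \lfloor g/2 \rfloor\}$ with $\mathbf{r} = g - 2\mathbf{s} \ge 0$. So it suffices, for each admissible pair $(\mathbf{r}, \mathbf{s})$, to construct a $2$-simple $g$-dimensional complex torus $T$ with $\End^0(T)$ equal to a primitive number field $E$ of degree $g$ and signature $(\mathbf{r}, \mathbf{s})$.

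\emph{Step 1: the field.} Produce a primitive number field $E$ of degree $g$ with signature $(\mathbf{r}, \mathbf{s})$ whose Galois closure $\tilde E$ has $\Gal(\tilde E/\BQ) = S_g$. Start with a separable polynomial $f_0 \in \BQ[x]$ of degree $g$ with $\mathbf{r}$ simple real and $\mathbf{s}$ pairs of simple conjugate non-real roots; by Hilbert irreducibility a perturbation $f_0 + t \cdot q$ with suitable $q$ and generic $t \in \BZ$ is irreducible with Galois group $S_g$ while preserving the signature (by continuity of roots). Set $E = \BQ[x]/(f)$. Primitivity follows from the maximality of $S_{g-1}$ in $S_g$, and $G = S_g$ acts $2$-transitively on the $g$ embeddings $E \hookrightarrow \tilde E$.

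\emph{Step 2: the torus family.} Let $\Lambda = \mathcal O_E \oplus \mathcal O_E$, so $\Lambda_\BQ = E^2$ is a $2$-dimensional $E$-vector space and $V_\BR := \Lambda \otimes \BR = E_\BR^2 = \prod_{\sigma \text{ real}} \BR^2 \times \prod_{\sigma \text{ complex pair}} \BC^2$. Equip $V_\BR$ with a complex structure $J$ commuting with the $E$-action, of \emph{balanced type}: on each $\BR^2$-factor pick any complex structure, and on each $\BC^2$-factor pick a $\BC$-linear endomorphism $J_j$ with $J_j^2 = -I$ whose $\pm i$-eigenspaces are each $1$-dimensional over $\BC$. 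These choices form a connected complex manifold $\mathcal M$, and $T_J := (V_\BR, J)/\Lambda$ is a $g$-dimensional complex torus with a canonical embedding $E \hookrightarrow \End^0(T_J)$.

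\emph{Step 3: genericity and $2$-simplicity.} For very general $J \in \mathcal M$ (outside a countable union of proper analytic subvarieties) one has $\End^0(T_J) = E$ and the Hodge group $\Hdg(T_J)$ equals the full centralizer $\mathrm{Res}_{E/\BQ}(\GL_{2,E})$ of $E$ in $\GL_\BQ(\Lambda_\BQ)$, both by standard Baire-category arguments (each strictly larger algebra or strictly smaller Mumford--Tate group is cut out by a proper analytic condition on $J$). Decompose $H^2(T_J,\BQ) = \wedge^2_\BQ \Lambda_\BQ^* = A \oplus B$, where scalar extension to $\tilde E$ gives $A \otimes_\BQ \tilde E = \bigoplus_\sigma \wedge^2_{\tilde E}\tilde E^2_\sigma$ of $\BQ$-dimension $g$ and $B \otimes_\BQ \tilde E = \bigoplus_{\sigma < \tau} \tilde E^2_\sigma \otimes_{\tilde E} \tilde E^2_\tau$ of $\BQ$-dimension $2g(g-1)$. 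The $2$-transitive action of $S_g$ on embeddings makes both $A$ and $B$ irreducible over $\BQ$ under $\mathrm{Res}_{E/\BQ}(\GL_{2,E}) = \Hdg(T_J)$, hence irreducible as rational Hodge substructures. The balanced choice of $J$ forces each $1$-dimensional $\sigma$-component of $A \otimes \BC$ to be of pure Hodge type $(1,1)$, so $A \subset H^{1,1}(T_J,\BQ)$ is disjoint from $H^{2,0}(T_J)$, while $H^{2,0}(T_J) \subset B_\BC$. Consequently the only rational Hodge substructures of $H^2(T_J,\BQ)$ are $0, A, B, A \oplus B$, and the only ones meeting $H^{2,0}(T_J)$ nontrivially are $B$ and $A \oplus B$, both containing all of $H^{2,0}(T_J)$; this is exactly $2$-simplicity. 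The principal obstacle is this last coupling between the arithmetic and Hodge decompositions: $\BQ$-irreducibility of $A$ and $B$ relies on the $2$-transitivity of the Galois action, while purity of $A$ in type $(1,1)$ relies on the balanced-type constraint on $J$; both ingredients have to be installed simultaneously.
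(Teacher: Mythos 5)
Your overall strategy is the same as the paper's: produce a degree-$g$ field $E$ with prescribed signature whose Galois action on embeddings is (at least almost) doubly transitive, realize $\Lambda_{\BQ}=E^2$ with a sufficiently general $E$-linear complex structure so that $\End^0(T)=E$ and the Hodge group is as large as possible, and then read off $2$-simplicity from the decomposition of $\wedge^2_{\BQ}\Lambda_{\BQ}$ into the ``diagonal'' piece and the piece indexed by pairs of embeddings (this is exactly the content of the paper's Theorems \ref{existReal}, \ref{existenceg} and \ref{exteriorH}, via Lemma \ref{keyG}). The differences are cosmetic: you get the field from Hilbert irreducibility with group $S_g$ where the paper uses weak approximation to force an $(n-1)$-cycle, and you invoke a Baire-category/very-general-member argument where the paper conjugates an explicit $J_0$ using \cite[Prop.~2.8]{OZ} and \cite[Lemma 2]{ZarhinSh}. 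Your reduction of the ``in particular'' clause to Theorem \ref{main}(vi) is also exactly what the paper does.

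There are, however, two genuine problems in Step 3. First, the identity $\Hdg(T_J)=\mathrm{Res}_{E/\BQ}(\GL_{2,E})$ is impossible: a Hodge group is always contained in $\SL(\Lambda_{\BQ})$ (the circle $h(\mathbb{S}^1)$ has determinant one), and the correct maximal group here is $\mathrm{Res}_{E/\BQ}\SL(\Lambda_{\BQ}/E)$ (Proposition \ref{simpleCom}, Theorem \ref{degreeg}). This is not merely notational: under $\mathrm{Res}_{E/\BQ}\SL_2$ your summand $A$ is a \emph{trivial} module (each $\wedge^2_{\tilde E}\tilde E^2_{\sigma}$ carries the determinant character, which is trivial on $\SL_2$), so $A$ is not irreducible and \emph{every} $\BQ$-subspace of $A$ is a Hodge substructure; your claimed list $\{0,A,B,A\oplus B\}$ of Hodge substructures is false. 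The theorem survives, because any substructure meeting $\mathrm{H}^{2,0}$ must contain the irreducible piece $B$ and $\mathrm{H}^{2,0}\subset B_{\BC}$ (this is how Lemma \ref{keyG} and Remark \ref{translation}(ii) are used in the paper), but as written one of your two ``key ingredients'' (irreducibility of $A$) is wrong and unnecessary. Second, the assertion that ``each strictly smaller Mumford--Tate group is cut out by a \emph{proper} analytic condition on $J$'' is exactly the point that requires proof: one must exhibit at least one $J$ in the family whose Hodge Lie algebra is all of $\sll_2(E)$, equivalently show that no proper $\BQ$-Lie subalgebra works for every $J$. The paper does this via the nowhere-density statement of \cite[Lemma 2]{ZarhinSh} (which needs that proper subalgebras contain no nonzero ideal of $\sll_2(E_{\BR})$, i.e., semisimplicity considerations) together with Lemma \ref{keyg} to identify the resulting algebra. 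As it stands, your genericity claim assumes the conclusion of Theorem \ref{existenceg} rather than proving it.
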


The paper is organized as follows.
We prove Theorem \ref{main} in Section \ref{proofMain}, using  explicit constructions related to the Hodge structure on $\Lambda_{\BQ}$
that will be discussed in Section \ref{hodgeS}.  Section \ref{almostTran} deals with (mostly well known) results about number fields that will be used
in the computations of Hodge groups of complex tori.
In Section \ref{HDG} we discuss general properties of Hodge groups of $2$-simple tori.
In Section \ref{DegreeG} we concentrate on the case when the endomorphism algebra is a number field of degree $g$.
Section \ref{semiLin} contains more or less standard results from ``Semilinear Algebra'' that we use in
in Section \ref{RepsKk},  in order to study representations of semisimple Lie algebras over arbitrary fields of characteristic zero. 
Using results of Section \ref{RepsKk}, we study in Section \ref{noEnd}  Hodge groups of $2$-simple tori without nontrivial endomorphisms.

This paper may be viewed as a follow up of \cite{OZ} and \cite{BZ}.

{\bf Acknowledgements} I am grateful to Fr\'ed\'eric Campana and Ekaterina Amerik for interesting stimulating questions.  My special thanks go to
Grigori Olshanski for a very informative letter about {\sl plethysm}. I am grateful to Tatiana Bandman and the referee, whose thoughtful comments helped to improve the exposition.

\section{Hodge structures}
\label{hodgeS}

  \begin{sect}
  It is well known that $\Lambda_{\BQ}=\mathrm{H}_1(T,\BQ)$
  carries the natural structure of a rational Hodge structure of weight $-1$. Let us recall the construction.
  Let $J: V \to V$ be the multiplication by $\ib=\sqrt{-1}$, which is viewed as an element of  $\End_{\BR}(V)$
  such that 
  $$J^2=-1.$$
  Hence, $J_{\BC}^2=-1$ in $\End_{\BC}(V_{\BC})$ and we define
   two mutually complex-conjugate  $\BC$-vector subspaces (of the same dimension)
 $\mathrm{H}_{-1,0}(T)$ and $\mathrm{H}_{0,-1}(T)$ of $V_{\BC}$
 as the eigenspaces $V_{\BC}(\mathbf{i})$ and $V_{\BC}(-\mathbf{i})$ of $J_{\BC}$ attached to eigenvalues $\ib$ and $-\ib$ respectively. Clearly,
 $$V_{\BC}=V_{\BC}(\mathbf{i})\oplus V_{\BC}(-\mathbf{i})=\mathrm{H}_{-1,0}(T)\oplus \mathrm{H}_{0,-1}(T),$$
 which defines the rational Hodge structure of weight $-1$ on $\Lambda_{\BQ}$, in light of
 $V_{\BC}=\Lambda_{\BQ}\otimes_{\BQ}\BC$. It also follows that both $\mathrm{H}_{-1,0}(T)$ and $\mathrm{H}_{0,-1}(T)$
 have the same  complex dimension $2g/2=g$.

 Recall that $V$ is a complex vector space. I claim that the map
 \begin{equation}
 \label{VV}
 \Psi: V \to V_{\BC}(\mathbf{i})=\mathrm{H}_{-1,0}(T), \ v \mapsto Jv\otimes 1+v\otimes \mathbf{i}
 \end{equation}
 is an  isomorphism of complex vector spaces.  Indeed, first, $\Psi$ defines a homomorphism of real vector spaces
 $V \to V_{\BC}$. Second, if $v\in V$ then
 $$J_{\BC}( Jv\otimes 1+v\otimes \mathbf{i})=J^2v\otimes 1+Jv\otimes \mathbf{i}=-v\otimes 1+Jv\otimes \mathbf{i}=
 \ib(Jv\otimes 1+v\otimes \ib),$$
 i.e., $Jv\otimes 1+v\otimes \ib\in V_{\BC}(\mathbf{i})=\mathrm{H}_{-1,0}(T)$ and therefore the map \eqref{VV} 
 is defined correctly. Third, taking into account that 
 $J$ is an automorphism of $V$ and $V_{\BC}=V\otimes 1\oplus V\otimes\ib$, we conclude that $\Psi$ is an injective homomorphism
 of real vector spaces and a dimension argument implies that it is actually an isomorphism. It remains to check that $\Psi$ is $\BC$-linear,
 i.e.,  $$\Psi(Jv)=\ib \Psi(v).$$  Let us do it. We have
 $$\Psi(Jv)=J(Jv)\otimes 1+Jv\otimes\ib=-v\otimes 1+Jv\otimes \ib=\ib (Jv\otimes 1+v\otimes \ib)=\ib \Psi(v).$$
 Hence, $\Psi$ is a $\BC$-linear isomorphism and we are done.
 
 Now suppose that  $u \in \End_{\BR}(V)$ commutes with $J$, i.e., $u\in \End_{\BC}(V)$. Then
 \begin{equation}
 \label{uPsiBC}
 \Psi\circ u=u_{\BC}\circ\Psi.
  \end{equation}
  In particular, $\mathrm{H}_{-1,0}(T)$ is $u_{\BC}$-invariant. Indeed, if $v\in V$ then
  $$ \Psi\circ u(v)=J u(v)\otimes 1+u(v)\otimes\ib=uJ(v)\otimes 1+u_{\BC}(v\otimes \ib)=u_{\BC}(J(v)\otimes 1)+u_{\BC}(v\otimes \ib)=u_{\BC}\circ\Psi(v),$$
  which proves our claim.
 
 Similarly, there is an anti-linear isomorphism of complex vector spaces
 $$V \to V_{\BC}(\mathbf{-i})=\mathrm{H}_{0,-1}(T), \ v \mapsto Jv\otimes 1-v\otimes \mathbf{i}.$$
 \end{sect}

 It is also well known that there is a canonical isomorphism of rational Hodge structures of weight 2
$$\mathrm{H}^2(T,\BQ)=\Hom_{\BQ}\large(\wedge^2_{\BQ}\mathrm{H}_1(T,\BQ),\BQ\large)$$
where the Hodge components $\mathrm{H}^{p,q}(T)$ ($p,q \ge 0, p+q=2$) are  as follows.
\begin{equation}
\label{Hodge2T}
\mathrm{H}^{2,0}(T)=\Hom_{\BC}\large(\wedge^2_{\BC}\mathrm{H}_{-1,0}(T),\BC\large), \quad
\mathrm{H}^{0,2}(T)=\Hom_{\BC}(\wedge^2_{\BC}\mathrm{H}_{0,-1}(T),\BC),
\end{equation}
$$\mathrm{H}^{1,1}(T)=\Hom_{\BC}(\mathrm{H}_{-1,0}(T),\BC)\wedge \Hom_{\BC}(\mathrm{H}_{0,-1}(T),\BC)\cong$$
$$\Hom_{\BC}(\mathrm{H}_{-1,0}(T),\BC)\otimes_{\BC} \Hom_{\BC}(\mathrm{H}_{0,-1}(T),\BC).$$
Clearly,
$$\dim_{\BC}(\mathrm{H}^{2,0}(T))=\frac{g(g-1)}{2}.$$

\section{Endomorphism Fields and Automorphism Groups}
\label{proofMain}

\begin{proof}[Proof of Theorem \ref{main}]
Let $T$ be a $2$-simple complex torus and assume that
$$g=\dim(T)\ge 3.$$

(i) Suppose that $T$ is {\sl not} simple. This means that there is a proper complex subtorus
$S=W/\Gamma$ where $W$ is a complex vector subspace of $V$ with
$$0<d=\dim_{\BC}(W)<\dim_{\BC}(V)=g$$
such that
$$\Gamma=W\cap \Lambda$$
is a discrete lattice of rank $2d$ in $W$.  Then the quotient $T/S$ is a complex torus of positive dimension $g-d$.

Let $H \subset \mathrm{H}^2(T,\BQ)$ be the image of the canonical
{\sl injective}  homomorphism of rational Hodge structures
$\mathrm{H}^2(T/S,\BQ) \hookrightarrow \mathrm{H}^2(T,\BQ)$ induced by the quotient map $T \to T/S$ of complex tori. Clearly, $H$ is a rational Hodge
substructure of  $\mathrm{H}^2(T,\BQ)$ and its $(2,0)$-component
$$H^{2,0}\subset H_{\BC}$$
has $\BC$-dimension
$$\dim_{\BC}(H^{2,0})=\dim_{\BC}(\mathrm{H}^{2,0}(T/S))=\frac{(g-d)(g-d-1)}{2}<\frac{g(g-1)}{2}=
\dim_{\BC}(\mathrm{H}^{2,0}(T)).$$
In light of the $2$-simplicity of $T$, 
$$\dim_{\BC}(\HH^{2,0})=0,$$
which implies that 
$$g-d=1.$$
On the other hand, let $\tilde{H}$ be the kernel of the canonical {\sl surjective}
homomorphism of rational Hodge structures
$\mathrm{H}^2(T,\BQ) \twoheadrightarrow \mathrm{H}^2(S,\BQ)$ induced by the inclusion map $S \subset T$ of complex tori. Clearly, $\tilde{H}$ is a rational Hodge
substructure of $\mathrm{H}^2(T,\BQ)$.  Notice that the induced homomorphism of  $(2,0)$-components
$\mathrm{H}^{2,0}(T) \to \mathrm{H}^{2,0}(S)$ is also surjective, because every holomorphic $2$-form on $S$ obviously extends to a holomorphic $2$-form on $T$.
This implies that the
 $(2,0)$-component
$$\tilde{H}^{2,0}\subset \tilde{H}_{\BC}$$
of $\tilde{H}$
has $\BC$-dimension
$$\dim_{\BC}(\tilde{H}^{2,0})=\dim_{\BC}(\mathrm{H}^{2,0}(T))-\dim_{\BC}(\mathrm{H}^{2,0}(S))=
\frac{g(g-1)}{2}-\frac{d(d-1)}{2}>0.$$
In light of the $2$-simplicity of $T$, 
$$\dim_{\BC}(\tilde{H}^{2,0})=\dim_{\BC}(\mathrm{H}^{2,0}(T))=\frac{g(g-1)}{2},$$
which implies that $\frac{d(d-1)}{2}=0$, i.e., $d=1$. Taking into account that $g-d=1$, we get $g=1+1=2$, which is not true.
The obtained contradiction proves that $T$ is simple and (i) is proven.  In particular, $\End^0(T)$ is a division algebra over $\BQ$.

In order to handle (ii), let us assume that $E$ is a subfield of $\End^0(T)$.  The simplicity of $T$ implies that $1\in E$
is the identity automorphism of $T$.
Then $\Lambda_{\BQ}$ becomes a faithful $E$-module.
This implies that $E$ is a number field and  $\Lambda_{\BQ}$ is an $E$-vector space of finite positive dimension
$$d_E=\frac{2g}{[E:\BQ]}.$$  This implies that $V_{\BC}=\Lambda_{\BQ}\otimes_{\BQ}\BC$ is a free $E\otimes_{\BQ}\BC$-module of rank $d_E$.
Clearly, both $\mathrm{H}_{-1,0}(T)$ and $\mathrm{H}_{0,-1}(T)$ are $E\otimes_{\BQ}\BC$-submodules of its direct sum $V_{\BC}$.
Let
$$\mathrm{tr}_{E/\BQ}: E \to \BQ$$
be the trace map attached to the  field extension $E/\BQ$ of finite degree. Let
$$\Hom_E\big(\wedge_E^2 \Lambda_{\BQ},E\big)$$
be the $\frac{d_E(d_E-1)}{2}$-dimensional $E$-vector space of alternating $E$-bilinear forms on $\Lambda_{\BQ}$; it carries the natural structure
of a rational Hodge structure of $\BQ$-dimension $[E:\BQ] \cdot \frac{d_E(d_E-1)}{2}$.  There is the natural embedding of rational Hodge structures
\begin{equation}
\label{HE2}
\Hom_E\left(\wedge_E^2 \Lambda_{\BQ},E\right) \hookrightarrow \Hom_{\BQ}\left(\wedge_{\BQ}^2 \Lambda_{\BQ},\BQ\right)=\mathrm{H}^2(T,\BQ), \
\phi_E \mapsto \phi:=\mathrm{tr}_{E/\BQ}\circ \phi_E,
\end{equation}
$$\ \forall \phi_E \in \Hom_E\left(\wedge_E^2 \Lambda_{\BQ},E\right),$$
i.e.,
\begin{equation}
\label{HE3}
\phi(\lambda_1,\lambda_2)=\mathrm{tr}_{E/\BQ} \big(\phi_E(\lambda_1,\lambda_2)\big) \ \forall \lambda_1,\lambda_2\in \Lambda_{\BQ}.
\end{equation}
The image of 
$\Hom_E\left(\wedge_E^2 \Lambda_{\BQ},E\right)$  in
$\Hom_{\BQ}\left(\wedge_{\BQ}^2 \Lambda_{\BQ},\BQ\right)=\mathrm{H}^2(T,\BQ)$ 
coincides with the $\BQ$-vector subspace 
\begin{equation}
\label{psiE}
H_E:=\{\phi \in \Hom_{\BQ}\left(\wedge_{\BQ}^2 \Lambda_{\BQ},\BQ\right) \mid
\phi(u \lambda_1,\lambda_2)=\phi(\lambda_1,u\lambda_2) \ \forall  u \in E,  \ \lambda_1,\lambda_2 \in \Lambda_{\BQ}\}.
\end{equation}
Indeed, it is obvious that the image lies in $H_E$. In order to check that the image coincides with the whole subspace $H_E$, let us construct
the inverse map
$$H_E \to \Hom_E\left(\wedge_E^2 \Lambda_{\BQ},E\right),  \ \phi \mapsto \phi_E$$ 
to   \eqref{HE2}
as follows.  If $\lambda_1,\lambda_2\in \Lambda_{\BQ}$
then there is a  $\BQ$-linear map
\begin{equation}
\label{phiE}
\Phi: E \mapsto \BQ,  \ u \mapsto \phi(u\lambda_1,\lambda_2)=\phi(\lambda_1,u\lambda_2)=-\phi(u\lambda_2,\lambda_1)=-\phi(\lambda_2,u\lambda_1).
\end{equation}
The properties of the trace map imply that there exists precisely one $\beta \in E$ such that
$$\Phi(u)=\tr_{E/\BQ}(u\beta) \ \forall u \in E.$$
Let us put
$$\\phi_E(\lambda_1,\lambda_2):=beta.$$
It follows from \eqref{phiE} that $\phi_E \in \Hom_E\left(\wedge_E^2 \Lambda_{\BQ},E\right)$.
In addition,
$$\mathrm{tr}_{E/\BQ}(\phi_E(\lambda_1,\lambda_2))=\mathrm{tr}_{E/\BQ}(\beta)=\mathrm{tr}_{E/\BQ}(1\cdot \beta)=\Phi(1)=
 \phi(\lambda_1,\lambda_2),$$
 which proves that $\phi\mapsto \phi_E$ is indeed the inverse map, in light of \eqref{HE3}.
 
 Clearly, $H_E$ is a rational Hodge substructure of $\mathrm{H}^2(T,\BQ)$.

By $2$-simplicity of $T$, the $\BC$-dimension of the $(2,0)$-component $H_E^{(2,0)}$ of $H_E$  is either $0$ or $g(g-1)/2$.
Let us express this dimension explicitly in terms of $g$ and $[E:\BQ]$.

In order to do that, let us consider the   set $\Sigma_E$ of  all field embeddings $\sigma: E \hookrightarrow \BC$, which consists of $[E:\BQ]$-elements. We have
\begin{equation}
\label{SigmaE}
E_{\BC}:=E\otimes_{\BQ}\BC=\bigoplus_{\sigma\in\Sigma_E}\BC_{\sigma} \ \text{ where } \BC_{\sigma}=E\otimes_{E,\sigma}\BC=\BC,
\end{equation}
which gives us a splitting of $E_{\BC}$-modules
\begin{equation}
\label{SigmaV}
V_{\BC}=\oplus_{\sigma \in \Sigma_E}V_{\sigma}=\bigoplus_{\sigma \in \Sigma_E}\left(\mathrm{H}_{-1,0}(T)_{\sigma}\oplus \mathrm{H}_{0,-1}(T)_{\sigma}\right)
\end{equation}
where for all $\sigma \in \Sigma_E$ we define
$$\mathrm{H}_{-1,0}(T)_{\sigma}: =\BC_{\sigma}\mathrm{H}_{-1,0}(T)=
\{x \in \mathrm{H}_{-1,0}(T) \mid u_{\BC}x=\sigma(u)x \ \forall u \in E\}\subset \mathrm{H}_{-1,0}(T);$$
$$n_{\sigma}:=\dim_{\BC}(\mathrm{H}_{-1,0}(T)_{\sigma});$$

$$\mathrm{H}_{0,-1}(T)_{\sigma}: =\BC_{\sigma}\mathrm{H}_{0,-1}(T)= \{x \in \mathrm{H}_{0,-1}(T) \mid u_{\BC}x=\sigma(u)x \ \forall u \in E\}\subset \mathrm{H}_{0,-1}(T);$$
$$m_{\sigma}:=\dim_{\BC}(\mathrm{H}_{0,-1}(T)_{\sigma});$$

$$V_{\sigma}=\BC_{\sigma}=\BC_{\sigma}V_{\BC}=\{x \in  V_{\BC} \mid u_{\BC}x=\sigma(u)x \ \forall u \in E\}=\mathrm{H}_{-1,0}(T)_{\sigma}\oplus \mathrm{H}_{0,-1}(T)_{\sigma} $$

Since $\mathrm{H}_{-1,0}(T)\oplus \mathrm{H}_{0,-1}(T)=V_{\BC}$ is a free $E_{\BC}$-module of rank $d_E$,
its direct summand $V_{\sigma}$ is a vector space of dimension $d_E$ 
over $\BC_{\sigma}=\BC$
and therefore
\begin{equation}
\label{nplusm}
n_{\sigma}+m_{\sigma}=d_E
\end{equation}
 for all $\sigma$.
Since   $\mathrm{H}_{-1,0}(T)$ and  $\mathrm{H}_{0,-1}(T)$ are mutually complex-conjugate subspaces of $V_{\BC}$,    it follows from \eqref{uBar}
that $$m_{\sigma}=n_{\bar{\sigma}} \ \ \text{ where }
\bar{\sigma}:E \hookrightarrow \BC,   \ u \mapsto \overline{\sigma(u)}$$ is the {\sl complex-conjugate}  of $\sigma$.  Therefore, in light of \eqref{nplusm},
\begin{equation}
\label{sigmaBaRsigma}
n_{\sigma}+n_{\bar{\sigma}}=d_E \ \forall \sigma.
\end{equation}
 We have
 \begin{equation}
 \label{sumS}
\sum_{\sigma\in \Sigma_E}n_{\sigma}=\sum_{\sigma\in\Sigma_E}\dim_{\BC}(\mathrm{H}_{-1,0}(T)_{\sigma})=   \dim_{\BC}(\mathrm{H}_{-1,0}(T))=g.
\end{equation}
Let us consider the complexification of $H_E$
$$H_{E,\BC}:=H_E\otimes_{\BQ}\BC\subset \Hom_{\BQ}\left(\wedge^2\Lambda_{\BQ},\BQ\right)\otimes_{\BQ}\BC=$$
$$\Hom_{\BC}\left(\wedge_{\BC}^2(\Lambda_{\BQ}\otimes_{\BQ}\BC),\BC\right)=
\Hom_{\BC}\left(\wedge^2V_{\BC},\BC\right).$$
In light of \eqref{psiE},
\begin{equation}
\label{EpsiC}
H_{E,\BC}=\{\phi \in \Hom_{\BC}\left(\wedge_{\BC}^2 V_{\BC},\BC\right)\mid \phi(u_{\BC}x,y)=\phi(x,u_{\BC}y) \ \forall u\in E; \ x,y\in V_{\BC}\}
\end{equation}
$$=\{\phi \in \Hom_{\BC}\left(\wedge_{\BC}^2V_{\BC},\BC\right)\mid \phi(u_{\BC}x,y)=\phi(x,u_{\BC}y) \ \forall u\in E_{\BC}; \ x,y\in V_{\BC}\}.$$
In particular, if $\sigma, \tau \in \Sigma_E$ are {\sl distinct} field embeddings then for all $\phi \in H_{E,\BC}$
$$\phi(V_{\sigma},V_{\tau})=\phi(V_{\tau},V_{\sigma})=\{0\}.$$

This implies that
\begin{equation}
\label{secondH}
H_{E,\BC}=
\oplus_{\sigma\in \Sigma_E}\Hom_{\BC}\left(\wedge^2_{\BC}V_{\sigma},\BC\right)
\end{equation}
$$=
\bigoplus_{\sigma\in \Sigma_E}\Hom_{\BC}\big(\wedge_{\BC}^2 \left(\mathrm{H}_{-1,0}(T)_{\sigma} \bigoplus  \mathrm{H}_{0,-1}(T)_{\sigma}\right) ,\BC\big).$$
In light of \eqref{Hodge2T}, the $(2,0)$-Hodge component of $H_{E,\BC}$ is
\begin{equation}
\label{secondF}
H_E^{(2,0)}=\oplus_{\sigma\in \Sigma_E}\Hom_{\BC}\left(\wedge_{\BC}^2 \mathrm{H}_{-1,0}(T)_{\sigma} ,\BC\right) \ \text{ and } \
\dim_{\BC}(H_E^{(2,0)})=\sum_{\sigma\in\Sigma_E} \frac{n_{\sigma}(n_{\sigma}-1)}{2}.
\end{equation}

This implies that
$\dim_{\BC}(H_E^{(2,0)})=0$ if and only if all $n_{\sigma}$ are in $\{0,1\}$. If this is the case then, in light of \eqref{sigmaBaRsigma},
$d_E \in\{1,2\}$, i.e., $[E:\BQ]=2g$ or $g$.

On the other hand, it follows from \eqref{sumS} combined with the second formula in \eqref{secondF} that
$\dim_{\BC}(H_E^{(2,0)})=g(g-1)/2$ if and only if there is precisely one $\sigma$ with $n_{\sigma}=g$
(and all  the other multiplicities $n_{\tau}$ are $0$). This implies that either $d_E=2g$ and $E=\BQ$, 
or $d_E=g$ and $E$ is an imaginary quadratic field with the pair of the field embeddings
$$\sigma,\bar{\sigma}: E \hookrightarrow \BC$$
such that
$$n_{\sigma}=g, \ n_{\bar{\sigma}}=0.$$

It is therefore enough to rule out the case  $d_E=g$.  By way of contradiction, 
 assume that  $d_E=g$. Then  $E$ is an imaginary quadratic field; in addition, if
$$u \in E\subset \End_{\BQ}(\Lambda_{\BQ})\subset \End_{\BR}(V)$$ then $u_{\BC}$ acts on $\mathrm{H}_{-1,0}(T)$ as multiplication by $\sigma(u) \in \BC$.
In light of \eqref{uBar},  $u_{\BC}$ acts on the complex-conjugate  subspace $\mathrm{H}_{0,-1}(T)$ as multiplication by $\overline{\sigma(u)}=\bar{\sigma}(u) \in \BC$.
Since $E$ is an imaginary quadratic field,  there are a positive integer $D$ and $\alpha\in E$ such that
$\alpha^2=-D$ and $E=\BQ(\alpha).$ It follows that
$\sigma(\alpha)=\pm \ib \sqrt{D}$. Replacing if necessary $\alpha$ by $-\alpha$, we may and will assume that
$$\sigma(\alpha)=\ib \sqrt{D}$$
and therefore $\alpha_{\BC}$ acts on $\mathrm{H}_{-1,0}(T)$ as multiplication by $\ib \sqrt{D}$. Hence, 
$\alpha_{\BC}$ acts on $\mathrm{H}_{0,-1}(T)$ as multiplication by $\overline{\ib \sqrt{D}}=-\ib \sqrt{D}$. Since
$$V_{\BC}=\mathrm{H}_{-1,0}(T)\oplus\mathrm{H}_{0,-1}(T),$$
we get 
$\alpha_{\BC}= \sqrt{D}J_C$ and therefore 
$$\alpha= \sqrt{D}J.$$
 This implies that the 
centralizer $\End^0(T)$ of $J$ in $\End_{\BQ}(\Lambda_{\BQ})$
coincides with the centralizer of $\alpha$ in $\End_{\BQ}(\Lambda_{\BQ})$, which, in turn, coincides with the centralizer $\End_E(\Lambda_{\BQ})$ of $E$ in $\End_{\BQ}(\Lambda_{\BQ})$, i.e.,
 $$\End^0(T)=\End_E(\Lambda_{\BQ}) \cong \Mat_{d_E}(E).$$
  This is a matrix algebra, which is not a division algebra, because $d_E=g>1$. This contradicts  the simplicity of $T$.
The obtained contradiction rules out the case $d_E=g$.
This ends the proof of (ii).

In order to prove (iii), recall that $\End^0(T)$ is a division algebra over $\BQ$, thanks to the simplicity of $T$ \cite{OZ}.
Hence $\Lambda_{\BQ}$ is a free  $\End^0(T)$-module of finite positive rank
and therefore 
\begin{equation}
\label{divisionP}
\dim_{\BQ}(\End^0(T))\mid 2g,
\end{equation}
because
$2g=\dim_{\BQ}(\Lambda_{\BQ})$. 
We will apply several times  the already proven assertion (ii) to various subfields of  $\End^0(T)$.

Suppose that  $\End^0(T)$ is {\sl not} a field and let $\mathcal{Z}$ be its center. Then $\mathcal{Z}$ is a number field and 
there is an integer $d>1$ such that
$\dim_{\mathcal{Z}}(\End^0(T))=d^2$ and therefore   
$$\dim_{\BQ}(\End^0(T))=d^2 \cdot [\mathcal{Z}:\BQ]$$
divides $2g$, thanks to \eqref{divisionP}.   Since $\mathcal{Z}$ is a subfield of $\End^0(T)$, the degree 
$ [\mathcal{Z}:\BQ]$ is either $1$ or $g$ or $2g$. If $ [\mathcal{Z}:\BQ] >1$ then
$2g$ is divisible by 
$$d^2 \cdot [\mathcal{Z}:\BQ] \ge 2^2 g=4g,$$ which is nonsense.
Hence, $ [\mathcal{Z}:\BQ]=1$, i.e., $\mathcal{Z}=\BQ$ and $\End^0(T)$ is a central  division $\BQ$-algebra
of dimension $d^2$ with $d^2 |2g$.   Then every maximal subfield $E$ of  the central division $\BQ$-algebra $\End^0(T)$ has degree $d$ over $\BQ$ \cite[Sect. 13.1, Cor. b]{Pierce}.
By  the already proven assertion (ii),  $d\in \{1,g,2g\}$. Since $d > 1$, we obtain that either $d=g$ and  $g^2 \mid 2g$ or $d=2g$ and $(2g)^2\mid 2g$. This implies that 
$d=g$ and $g=1$ or $2$.
Since $g \ge 3$, we get a contradiction, which implies that $\End^0(T)$ is a field.

It follows from the already proven assertion (ii) that the degree $\dim_{\BQ}(\End^0(T))$ of the number field $\End^0(T)$  is either $1$ or $g$ or $2g$. 

Assertion (iv) is obvious and was included just for the sake of completeness.

In order to handle the structure of $\Aut(T)$, let us check first that the only roots of unity in $\End^0(T)$ are $1$ and $-1$. If this is not the case then the field 
$\End^0(T)$  contains either $\sqrt{-1}$ or a primitive $p$th root of unity $\zeta$ where $p$ is a certain odd prime. In the former case
$\End^0(T)$ contains the quadratic field $\BQ(\sqrt{-1})$, which contradicts (ii). In the latter case $\End^0(T)$  contains either
the quadratic field $\BQ(\sqrt{-p})$ or the quadratic field $\BQ(\sqrt{p})$: each of these outcomes contradicts  (ii) as well. 

Now recall that $\End(T)$ is an order in the number field $E=\End^0(T)$
and $\Aut(T) =\End(T)^{*}$ is its group of units.
 By the Theorem of Dirichlet  about units \cite[Ch. II, Sect. 4, Th. 5]{BS},
the group of units is
\begin{equation}
\label{Dunits}
\Aut(T) \cong \BZ^{d} \times \{\pm 1\} \ \text{ with } \ d=r_E+s_E-1
\end{equation}
where $r_E$ is the number of real field embeddings $E \hookrightarrow \BR$ and 
\begin{equation}
\label{realComplex}
r_E+2s_E =[E:\BQ], \ \text{ i.e., } \ s_E=\frac{[E:\BQ]-r_E}{2}.
\end{equation}

Let us prove (v). Assume that the number field $E=\End^0(T)$ has degree $2g$. A dimension argument implies that $\Lambda_{\BQ}$ is a $1$-dimensional $E$-vector space
and $V=\Lambda_{\BQ}\otimes_{\BQ}\BR$ is a free $E_{\BR}=E\otimes_{\BQ}\BR$-module of rank $1$. Hence $E_{\BR}$ coincides with its own centralizer
$\End_{E_{\BR}}(V)$  in $\End_{\BR}(V)$. Since $J$ commutes with $\End^0(T)=E$, it also commutes with $E_{\BR}$ and therefore
$$J \in \End_{E_{\BR}}(V) =E_{\BR}.$$
Recall that the $\BR$-algebra $E_{\BR}$ is isomorphic to a product of copies of $\BR$ and $\BC$.  Since $J^2=-1$,   only copies of $\BC$
appear in $E_{\BR}$, i.e., $E$ is purely imaginary, which means that
$r_E=0$ and therefore  $2g=[E:\BQ]=2s_E$.
 This  proves  the first  assertion of (v); the second one follows readily from \eqref{Dunits} combined with \eqref{realComplex}.
 
  In order to prove the
 last assertion, assume that $E$ contains two distinct proper subfields $E_1$ and $E_2$, none of which coincides with $\BQ$. Clearly,
 $$[E_1:\BQ]=g=[E_2:\BQ],$$
 which means that both field extensions $E/E_1$ and $E/E_2$ are quadratic. This implies that the (finite) automorphism group $G:=\Aut(E/\BQ)$ of the field extension $E/\BQ$
 contains two distinct elements $t_1$ and $t_2$ of order $2$ such that 
 $$E_1=\{u \in E\mid t_1(u)=u\}, \quad E_2=\{u \in E\mid t_2(u)=u\},$$
 It follows that $G$ is a  group of  order $M$ where $M$ is an even integer that is strictly greater than $2$.  Then the subfield $F:=E^G$ of $G$-invariants is a proper
 subfield of $E$ and its degree 
 $$[F:\BQ]=\frac{[E:\BQ]}{M} <\frac{2g}{2}=g.$$
 It follows from (ii) that $F=\BQ$ and therefore $M=[E:\BQ]=2g$.  
 
 If $g$ is {\sl not} a power of $2$ then there is an odd prime $p$ dividing $g$ and therefore dividing $M$. It follows that $G$ contains an element $t$ of order $p$.
 Therefore the subfield $E^t$ of $t$-invariants is a proper subfield of $E$ and its degree $[E^t:\BQ]$ is $2g/p<g$. By (ii), $E^t=\BQ$ and therefore
 $2g=[E:\BQ]=p$, which is wrong, since $p$ is odd. Hence $g$ is a power of $2$ and therefore $G$ is a finite $2$-group. It follows that $G$ has a normal subgroup $H$ of index $2$.
 Then the subfield $E^{H}$ of $H$-invariants is a proper subfield of $E$ and its degree 
 $[E^{H}:\BQ]$ equals the index $[G:H]=2$. This also contradicts (ii), which ends the proof of the last assertion of (v).
 
 Let us prove (vi). Assume that $[E:\BQ]=g$.  Then the  assertion about $\Aut(T)$ follows readily from \eqref{Dunits} combined with \eqref{realComplex}.
 If $F\ne \BQ$ is a proper subfield of $E$ then
 $$1=[\BQ:\BQ]<[F:\BQ]<[E:\BQ]=g$$
 and therefore $1<[F:\BQ]<g$, which contradicts (ii) applied to $F$ instead of $E$. So, such an $F$ does {\sl not} exist, i.e., $E$ is {\sl primitive}.
 
 Assume now that $T$ is a complex abelian variety. By Albert's classification \cite{MumfordAV},  $E=\End^0(T)$ is either a totally real number field or a CM field. If $E$ is a CM field then it contains a subfield $E_0$ of degree
 $[E:\BQ]/2=g/2$. Since $E_0$ is a subfield of $\End^0(T)$ and $1<g/2<g$ (recall that $g \ge 3$), the existence of $E_0$ contradicts  the already proven assertion (ii). This proves that $E$ is a totally real number field, i.e., $s=0, r=g$. Now the assertion about $\Aut(T)$ follows from \eqref{Dunits}.
\end{proof}

\section{Number Fields and  transitive permutation groups}
\label{almostTran}
All the results of this section are standard and pretty well known except, may be, the notion of almost double transitivity.

\begin{defn}
Let $\mathcal{T}$ be a set that consists of at least three elements. We write $\mathrm{Perm}(\mathcal{T})$ for the group of all permutations of $\mathcal{T}$.
Let $G$ be a group that acts  on $\mathcal{T}$, i.e., we are given a group homomorphism
$$G \to \mathrm{Perm}(\mathcal{T}),$$ 
whose image we denote by $\tilde{G}$, which is a subgroup of $\mathrm{Perm}(\mathcal{T})$.
We say that a {\sl transitive} action of $G$ on $\mathcal{T}$
is {\sl almost doubly transitive} if the action of $G$ on the set of all two-element subsets of $\mathcal{T}$ is transitive.
\end{defn}

\begin{rems}
\label{almostT}
\begin{enumerate}
\item
Every doubly transitive action is almost doubly transitive.
\item
Every almost doubly transitive action of $G$ on $\mathcal{T}$ is primitive, i.e., the stabilizer of a point is a maximal subgroup.
Indeed, suppose the action is {\sl not} primitive, i.e., that $\mathcal{T}$ partitions into a disjoint union of $r$ sets $\mathcal{T}_1, \dots, \mathcal{T}_r$ such that $r \ge 2$,
each $\mathcal{T}_i$ consists of $m \ge 2$ elements, and $G$ permutes $\mathcal{T}_i$s. Let $A$ be a $2$-element subset of $T_1$. Pick $b_1 \in\mathcal{T}_1$
and $b_2 \in\mathcal{T}_2$, and consider a $2$-element subset $B=\{b_1,b_2\}$ of $\mathcal{T}$. Clearly, no $s\in G$ sends $A$ to $B$, i.e., the action is not almost doubly transitive.

\item
If  $\mathcal{T}$ consists of three elements then every transitive action on $\mathcal{T}$ of any group $G$
is  almost doubly transitive.
\item
If $\mathcal{T}$ is a finite set then the group $\tilde{G}$  is a finite group of permutations of $\mathcal{T}$ that
is  primitive (resp. almost doubly transitive, resp. doubly transitive) if and only if $G$ is  primitive (resp. almost doubly transitive, resp. doubly transitive)

\item
Suppose that $\mathcal{T}$ is a finite set that consists of $n \ge 3$ elements and $G$ is a 
group that acts faithfully and almost doubly transitively on $\mathcal{T}$. Let $N$ be the order of $\tilde{G}$.

Then $N$ is divisible by $n(n-1)/2$. If $N$ is even then $\tilde{G}$ contains an element $\tilde{\sigma}$ of order $2$ and therefore there are
two distinct elements $s_1, s_2 \in \mathcal{T}$ such that
$$\tilde{\sigma}(s_1)=s_2, \tilde{\sigma}(s_2)=s_1.$$
It follows that the action of $\tilde{G}$ on $\mathcal{T}$ is doubly transitive and therefore the action of $G$ on $\mathcal{T}$ is also doubly transitive.
This implies that if either $4|n$ or $n \equiv 1 \ \bmod 4$ then the action of $G$ on $\mathcal{T}$ is doubly transitive,
because in these cases  $n(n-1)/2$ is even.
\item
Let $n=q$ be a prime power that is congruent to $3$ modulo $4$.  Let $\BF_q$ be a $q$-element finite field
and $\BF_q^{*}$  the multiplicative group of nonzero elements of $\BF_q$. Then $\BF_q^{*}$ splits into a direct product
$\BF_q^{*}=H \times \{\pm 1\}$ where $H$ is the cyclic subgroup of odd order $(q-1)/2$.  Let us put $\mathcal{\mathcal{T}}=\BF_q$ and let
$G$ be the group of affine transformations of $\BF_q$
$$ x \mapsto ax+b, \  a \in H \subset \BF_q^{*}, b \in \BF_q.$$
Then the action of $G$ on $\BF_q$ is almost doubly transitive but {\sl not} doubly transitive.
\end{enumerate}
\end{rems}

Let $\bar{\BQ}$ be the algebraic closure of $\BQ$ in $\BC$ and 
$$\Gal(\BQ)=\Gal(\bar{\BQ}/\BQ)=\Aut(\bar{\BQ}/\BQ)$$
the absolute Galois group of $\BQ$.  Let us consider the humongous group $\Aut(\BC)$ of all automorphisms of the field $\BC$. Obviously,
the subfield $\bar{\BQ}$ is $\Aut(\BC)$-invariant, which gives rise to the (restriction) homomorphism of groups
\begin{equation}
\label{CQ}
\Aut(\BC) \twoheadrightarrow \Gal(\BQ),  \quad s \mapsto \{\alpha \mapsto s(\alpha)\} \ \forall s \in \Aut(\BC), \alpha \in \bar{\BQ}
\end{equation}
which is {\sl surjective}.

Let $E$ be a number field of degree $n=[E:\BQ]$. We write    $\Sigma_E$ for the $n$-element set of  all field embeddings $\sigma: E \hookrightarrow \BC$.
 For each $\sigma \in \Sigma_E$  the image $\sigma(E)$ lies in $\bar{\BQ}$. If $t$ is an element of $\Aut(\BC)$ (or of $\Gal(\BQ)$) then the composition
$$t \circ \sigma: E \overset{\sigma}{\hookrightarrow} \bar{\BQ} \overset{t}{\to} \bar{\BQ}\subset \BC$$
also lies in $\Sigma_E$. Then the map
\begin{equation}
\label{GQE}
\Aut(\BC)  \times \Sigma_E \to \Sigma_E, \quad (t,\sigma) \mapsto t \circ \sigma
\end{equation}
is a {\sl transitive group action} of $\Aut(\BC)$ on  $\Sigma_E$, which factors through $\Gal(\BQ)$ via \eqref{CQ}. 
This action is {\sl primitive} (i.e., the stabilizer of a point is a maximal subgroup) if and only if $E$ is a primitive number field.
Similarly, the map
\begin{equation}
\label{GQEbar}
\Gal(\BQ)  \times \Sigma_E \to \Sigma_E, \quad (t,\sigma) \mapsto t \circ \sigma
\end{equation}
is a {\sl transitive group action} of $\Gal(\BQ)$ on  $\Sigma_E$, which is primitive if and only if $E$ is a primitive number field.
\begin{defn}
We say that $E$ is a {\sl doubly transitive} (respectfully {\sl almost doubly transitive}) number field if the action \eqref{GQE} (or equivalently
the action \eqref{GQEbar})
is  doubly transitive (respectfully almost doubly transitive). The corresponding finite subgroup
 $\tilde{G}=\widetilde{\Aut(\BC)}=\widetilde{\Gal(\BQ)}$ of $\mathrm{Perm}(\Sigma_E)$ is isomorphic to the Galois group $\Gal(\tilde{E}/\BQ)$ where $\tilde{E}$ is a {\sl normal closure} of $E$.
 \end{defn}
 \begin{rem}
 \label{EisoF}
 Clearly, if $E$ and $F$ are isomorphic number fields then   $E$ is  primitive (resp.  doubly transitive) (resp.  almost doubly transitive)  if 
and only if $F$ is primitive  (resp. doubly transitive) (resp. almost doubly transitive).
\end{rem}
 
 \begin{ex}
 \label{Ef}
 \begin{itemize}
 \item[(i)]
 Let $f(x) \in \BQ[x]$ be an irreducible polynomial of degree $n \ge 2$ over $\BQ$ and
 $E_f=\BQ[x]/f(x)\BQ[x]$ the corresponding number field of degree $n$. We write $\RR_f$ for the $n$-element set of roots of $f(x)$ in $\bar{\BQ}$
 and $\BQ(\RR_f)$ for the subfield of $\bar{\BQ}$ generated by $\RR_f$. By definition, $\BQ(\RR_f)$  is a splitting field of $f(x)$ that is a finite Galois extension
 of $\BQ$. We write $\Gal(f)$ for the Galois group $\Gal(\BQ(\RR_f)/\BQ)$ of the field extension $\BQ(\RR_f)/\BQ$. It is well known that 
 $\Gal(\BQ)$ acts transitively (through  $\Gal(f)$ ) on $\RR_f$.  There is a $\Gal(\BQ)$-equivariant bijection between $\Sigma_{E_f}$ and $\RR_f$ that is defined as follows.
 To each $\alpha \in \RR_f$ corresponds the field embedding
 $$\sigma_{\alpha}: E_f=\BQ[x]/f(x)\BQ[x] \hookrightarrow \bar{\BQ}\subset \BC, \ h(x)+f(x)\BQ[x] \mapsto h(\alpha)$$
(in particular, the coset of $x$ goes to $\alpha$). This implies that the field $E_f$ is  doubly transitive (respectfully almost doubly transitive) if and only if
 the action of $\Gal(f)$ on $\RR_f$ is  doubly transitive (respectfully almost doubly transitive). The similar characterization of primitive number fields is well known:
 
{\sl  the field $E_f$ is primitive if and only if the action of $\Gal(f)$ on $\RR_f$ is primitive}.

On the other hand, obviously,
$r_{E_f}$ equals the number of {\sl real roots} of $f(x)$ and
$2 s_{E_f}=n-r_{E_f}$.
 \item[(ii)]
  Conversely, let $F$ be a number field of degree $n$ and $z \in F$ be a primitive element of $F$, i.e., the smallest subfield $\BQ(z)$ of $F$ that contains $z$ coincides with $F$
 (such an element always exists). Let $f(x)\in\BQ[x]$ be the minimal polynomial of $z$, i.e., $f(x)$ is irreducible over $\BQ$ and $f(z)=0$; in addition, $\deg(f)=n$. Then there is a field
 isomorphism $E_f\cong F$ such that the coset $x+f(x)\BQ[x]\in E_f$ goes to $z \in F$.  Therefore the number field $F$ is (almost) doubly transitive if and only if $\Gal(f)$ acts (almost) doubly transitively on $\RR_f$.
 \end{itemize}
 \end{ex}

\begin{thm}
\label{existReal}
Let $n \ge 3$ be an integer. Let $\mathbf{r},\mathbf{s}$ be nonnegative integers such that
\begin{equation}
\label{rTwos}
\mathbf{r}+2\mathbf{s}=n.
\end{equation}
Then there exists a number field $E$ of degree $n$ that enjoys the following properties.

\begin{itemize}
\item[(i)]
$r_E=\mathbf{r}, \quad s_E=\mathbf{s}$.
\item[(ii)]
$E$ is doubly transitive.
\end{itemize}
\end{thm}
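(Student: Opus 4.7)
The plan is to realize $E$ as $E = E_f = \BQ[x]/f(x)\BQ[x]$ for a suitably chosen irreducible $f\in\BQ[x]$ of degree $n$, invoking the dictionary of Example \ref{Ef}. It is enough to find an irreducible $f$ of degree $n$ with exactly $\mathbf{r}$ real roots (so that $r_{E_f} = \mathbf{r}$, and then $s_{E_f} = \mathbf{s}$ automatically by \eqref{rTwos}) whose Galois group $\Gal(f)$ acts doubly transitively on $\RR_f$. In fact, it suffices to arrange $\Gal(f)\cong S_n$, since $S_n$ acts $n$-transitively, and a fortiori doubly transitively, on an $n$-element set.

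First I would fix a template monic polynomial
$$f_0(x) \;=\; \prod_{i=1}^{\mathbf{r}}(x - i)\;\cdot\;\prod_{j=1}^{\mathbf{s}}\bigl((x - \mathbf{r} - j)^2 + 1\bigr)\;\in\;\BQ[x]$$
of degree $n$, which has $\mathbf{r}$ pairwise distinct simple real roots and $\mathbf{s}$ distinct pairs of simple complex-conjugate non-real roots. Identifying the space of monic degree-$n$ polynomials with real coefficients with $\BR^n$ via the non-leading coefficients, continuity of roots supplies an open neighborhood $U$ of $f_0$ in $\BR^n$ such that every monic degree-$n$ polynomial whose coefficient vector lies in $U$ still has exactly $\mathbf{r}$ simple real roots and $2\mathbf{s}$ simple non-real roots.

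The next step would be to perturb $f_0$ within $U$ to a polynomial in $\BQ[x]$ whose Galois group is $S_n$. This is supplied by Hilbert's irreducibility theorem applied to the generic polynomial $X^n + T_{n-1}X^{n-1} + \cdots + T_0$, whose Galois group over the purely transcendental extension $\BQ(T_0,\dots,T_{n-1})$ is the full symmetric group $S_n$. Accordingly, the set of rational coefficient vectors $(a_0,\dots,a_{n-1}) \in \BQ^n$ for which $x^n + a_{n-1}x^{n-1} + \cdots + a_0\in\BQ[x]$ is irreducible over $\BQ$ with Galois group $S_n$ is the complement of a thin subset of $\BQ^n$, and in particular is dense in $\BR^n$ in the usual real topology. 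Intersecting this dense rational set with the nonempty open set $U$ produces an $f \in \BQ[x]$ which is simultaneously irreducible over $\BQ$ with $\Gal(f) \cong S_n$ and has exactly $\mathbf{r}$ real and $2\mathbf{s}$ non-real roots. Setting $E := E_f$ then yields the desired number field.

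The only delicate point is the density, in the real topology on $\BR^n$, of the Hilbert set of polynomials with Galois group $S_n$. This is a standard consequence of Hilbert irreducibility, resting on the fact that thin subsets of $\BQ^n$ have empty interior in $\BR^n$; once this is granted, the rest of the argument is routine continuity combined with the correspondence in Example \ref{Ef}.
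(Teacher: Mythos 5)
Your proposal is correct, but it takes a genuinely different route from the paper. The paper also reduces to producing an irreducible $f$ of degree $n$ with $\mathbf{r}$ real roots whose Galois group acts doubly transitively, but it gets there by weak approximation at finitely many places: one approximates simultaneously a real polynomial with the prescribed root pattern (archimedean place), a $p$-adic Eisenstein polynomial (forcing irreducibility over $\BQ_p$, hence over $\BQ$), and at a third prime $\ell$ a polynomial that factors as $x$ times an irreducible of degree $n-1$ modulo $\ell$; by Dedekind/van der Waerden this puts an $(n-1)$-cycle into $\Gal(f)$, and a transitive group on $n$ letters containing an $(n-1)$-cycle is doubly transitive. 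You instead invoke Hilbert's irreducibility theorem for the generic polynomial to force $\Gal(f)\cong S_n$, and you correctly isolate the one nontrivial input: that the complement of a thin subset of $\BQ^n$ is dense in $\BR^n$, so that the Hilbertian set meets your open neighborhood $U$. That density statement is indeed standard (it follows from the quantitative count of points of bounded height in a thin set, as in Serre's treatment of Hilbertian sets, and holds in every real or $v$-adic topology), so your argument is complete modulo that citation. The trade-off is that the paper's argument is more elementary and effective --- it uses only the Chinese-remainder-style approximation theorem of Artin--Whaples and Dedekind's reduction criterion, and it certifies double transitivity directly --- whereas yours is shorter to state but leans on heavier machinery and proves more than is needed (the full symmetric group rather than mere double transitivity).
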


\begin{proof}
We will use  weak approximation in $\BQ$, approximating several polynomials in $\BQ[x]$ with respect to several metrics in $\BQ$.

First, fix a degree $n$ monic polynomial $h_{\infty}(x) \in \BZ[x]$ that has precisely $\mathbf{r}$ distinct real roots
and $\mathbf{s}$  distinct pairs of non-real complex-conjugate roots. (E.g., one may take
$$h_{\infty}(x) =\left(\prod_{i=1}^{\mathbf{r}} (x-i)\right)\cdot \left( \prod_{j=1}^{\mathbf{s}} (x^2+j^2)\right)\in \BZ[x]\subset \BQ[x].)$$
Second,  take any prime $p$ and choose a   monic ($p$-adic Eisenstein polynomial) $h_p(x) \in \BZ[x]$, all whose coefficients (except the leading one)
are divisible by $p$ while the constant term is {\sl not} divisible by $p^2$. (E.g., one may take
$$h_p(x)=x^n-p\in \BZ[x]\subset \BQ[x].)$$
Third, take any prime $\ell \ne p$ and choose  a monic irreducible  polynomial  $$\tilde{u}_{\ell}(x) \in \BF_{\ell}[x]$$ over $\BF_{\ell}$ of degree $n-1$. (Such a polynomial always exists for any given $\ell$ and $n$.) Since $n-1 \ge 3-1>1$, the irreducibility of $\tilde{u}_{\ell}(x)$ implies that
$$\tilde{u}_{\ell}(0) \ne 0.$$
Let $u_{\ell}(x)\in \BZ[x]$ be any monic degree $(n-1)$ polynomial with integer coefficients, whose reduction modulo $\ell$ coincides with $\tilde{u}_{\ell}(x)$.
Let us put
$$h_{\ell}(x):=x \cdot u_{\ell}(x)\in \BZ[x]\subset \BQ[x].$$

It follows from  a weak approximation theorem  \cite[Th. 1]{Artin} that there is a monic degree $n$ polynomial $f(x) \in \BQ[x]$ that enjoys the following properties.

\begin{itemize}
\item[(a)]
$f(x)$ is so close to $h_{\infty}(x)$ in the archimedean topology that it also  has precisely $\mathbf{r}$ distinct real roots
and $\mathbf{s}$  distinct pairs of non-real complex-conjugate roots. 
\item[(b)]
$f(x)-h_p(x) \in p^2 \cdot \BZ_p[x]$. This implies that $f(x)$ is irreducible over the field $\BQ_p$ of $p$-adic numbers and therefore is irreducible over $\BQ$.
\item[(c)]
$f(x)-h_{\ell}(x) \in \ell \cdot  \BZ_{\ell}[x]$.  This implies that 
$$f(x) \in \BZ_{\ell}[x], \ f(x) \bmod \ell=x \cdot \tilde{u}_{\ell}(x) \in \BF_{\ell}[x].$$
By Hensel's Lemma, there are $$\alpha \in \ell \BZ_{\ell}\subset \BZ_{\ell}$$ and a monic degree $(n-1)$ polynomial $v(x) \in \BZ_{\ell}[x]$ such that
\begin{equation}
\label{vanDer}
f(x)=(x-\alpha) v(x)\in \BZ_{\ell}[x], \quad  v(x) \bmod \ell=\tilde{u}_{\ell}(x) \in \BF_{\ell}[x].
\end{equation}
\end{itemize}
By \cite[Sect. 66]{Waerden}, the irreducibility of $\tilde{u}_{\ell}(x)$ combined with \eqref{vanDer} imply
that $\Gal(f)$, viewed as the certain transitive subgroup of $\Perm(\mathfrak{R}_f)$, contains a permutation $s$ that is a cycle of length $n-1$.  In particular,
if $\alpha\in \RR_f$ is the fixed point of $s$ then the cyclic subgroup $<s>$ of $\Gal(f)$ generated by $s$ acts transitively on $\RR_f \setminus \{\alpha\}$.
Now the transitivity
of $\Gal(f)$ implies its double transitivity. It remains only to put $E:=E_f$
and apply the results of Example \ref{Ef}(i).

\end{proof}

\section{Hodge groups}
\label{HDG}
Recall that $\Lambda_{\BR}=V$ carries the natural structure of a complex vector space. This gives rise to the injective homomorphism of real Lie groups
$$h: \BC^{*} \hookrightarrow \Aut_{\BR}(\Lambda_{\BR})$$
where $h(z)$ is multiplication by a nonzero complex number $z$ in $\Lambda_{\BR}=V$.  Let
$\mathbb{S}^1\subset \BC^{*}$ be the subgroup of all complex numbers $z$ with $|z|=1$.
Clearly, $h(\mathbb{S}^1)$ is a one-dimensional closed connected real  Lie subgroup of 
$\Aut_{\BR}(\Lambda_{\BR})$; in addition,  the Lie algebra of  $h(\mathbb{S}^1)$ is
$$\BR \cdot J  \subset \End_{\BR}(\Lambda_{\BR}).$$
Actually,
$h(\mathbb{S}^1)$ lies in the special linear group $\mathrm{SL}(\Lambda_{\BR})$ while
$\BR \cdot J$ lies in the Lie algebra  $\sll(\Lambda_{\BR})$ of traceless operators in $\Lambda_{\BR}$.

By definition \cite{Mumford,Serre} (see also \cite{Z84}), the Hodge group $\mathrm{Hdg}(T)$ of the rational Hodge structure $\mathrm{H}_1(T,\BQ)=\Lambda_{\BQ}$ is the smallest algebraic $\BQ$-subgroup $G$ of $\mathrm{GL}(\Lambda_{\BQ})$, whose group of real points
$$G(\BR) \subset \Aut_{\BR}(\Lambda_{\BR})$$ contains $h(\mathbb{S}^1)$. One may easily check that $\mathrm{Hdg}(T)$ enjoys the following properties that we will freely use throughout the text.

\begin{itemize}
\item[(i)]
$\mathrm{Hdg}(T)$ is a {\sl connected} algebraic $\BQ$-group that is a subgroup of the special linear group
$\SL(\Lambda_{\BQ})$.
\item[(ii)]
The centralizer of $\mathrm{Hdg}(T)$ in $\End_{\BQ}(\Lambda_{\BQ})$ coincides with $\End^0(T)$.
\item[(iii)]
A $\BQ$-vector subspace $H_{\BQ}$ of $\Lambda_{\BQ}$ is $\mathrm{Hdg}(T)$-invariant if and only if it is a rational Hodge substructure of $\Lambda_{\BQ}$.
\item[(iv)]
The subspace of $\mathrm{Hdg}(T)$-invariants
$$\mathrm{H}^2(T,\BQ)^{\mathrm{Hdg}(T)}\subset \mathrm{H}^2(T,\BQ)=\Hom_{\BQ}(\wedge_{\BQ}^2\Lambda_{\BQ},\BQ)$$ 
coincides with the subspace $\mathrm{H}^2(T,\BQ)\cap H^{1,1}(T)$ of two-dimensional Hodge classes on $T$.
\item[(v)] The group $\Hdg(T)(\BQ)$  of $\BQ$-points is {\sl Zariski dense} in $\Hdg(T)$, because $\Hdg$ is connected and the field  $\BQ$ in  infinite (see \cite[Cor. 18.3]{Borel}).
\item[(vi)] If $T \ne \{0\}$ then $\mathrm{Hdg}(T)$ is a positive-dimensional connected algebraic group.
\end{itemize}

Let us consider the $\BQ$-Lie algebra
$\mathrm{hdg}_{T}$
of the linear  {\sl algebraic} $\BQ$-group $\mathrm{Hdg}(T)$.  By definition, $\mathrm{hdg}_{T}$ is a linear {\sl algebraic} Lie subalgebra of
$\End_{\BQ}(\Lambda_{\BQ})$. 

\begin{rem}
\label{minLieLie}
Clearly, $\mathrm{hdg}_{T}$ is the {\sl smallest algebraic} Lie $\BQ$-subalgebra $\mathfrak{g}$ of
$\End_{\BQ}(\Lambda_{\BQ})$ such that
\begin{equation}
\label{minLieJ}
J \in \mathfrak{g}\otimes_{\BQ}\BR.
\end{equation}
\end{rem}

 Properties (i) and (ii) above imply that 
\begin{equation}
\label{hdgSL0}
\mathrm{hdg}_{T} \subset \sll(\Lambda_{\BQ})\subset \End_{\BQ}(\Lambda_{\BQ})
\end{equation}
and the centralizer of $\mathrm{hdg}_{T}$ in $\End_{\BQ}(\Lambda_{\BQ})$ is described as follows.
\begin{equation}
\label{hdgC}
\End_{\mathrm{hdg}_{T} }(\Lambda_{\BQ})=\End^0(T).
\end{equation}

Clearly, 
$$J\in \mathrm{hdg}_{T,\BR}:=\mathrm{hdg}_{T}\otimes_{\BQ}\BR\subset \End_{\BQ}(\Lambda_{\BQ})\otimes\BR=\End_{\BR}(\Lambda_{\BR}).$$
Let us consider the {\sl complexification}
$$\mathrm{hdg}_{T,\BC}:=\mathrm{hdg}_{T}\otimes_{\BQ}\BC\subset \End_{\BQ}(\Lambda_{\BQ})\otimes\BC=\End_{\BC}(\Lambda_{\BC})$$
where
$$\Lambda_{\BC}=\Lambda_{\BQ}\otimes_{\BQ}\BC=\Lambda_{\BR}\otimes_{\BR}\BC.$$
We have
$$J\in \mathrm{hdg}_{T,\BR}=\mathrm{hdg}_{T,\BR}\otimes 1\subset \mathrm{hdg}_{T,\BR}\otimes_{\BR}\BC=\mathrm{hdg}_{T,\BC}\subset \End_{\BC}(\Lambda_{\BC}).$$
(See \eqref{notationAbuse} and Remark \ref{notAB}.) In what follows, we will write $J$ instead of 
 $J_{\BC}$, slightly abusing notation.

The   group $\Aut(\BC)$  acts naturally,  {\sl semi-linearly} and compatibly on $\Lambda_{\BC}$, 
$\End_{\BC}(\Lambda_{\BC})$ and $\mathrm{hdg}_{T,\BC}$.

The {\sl minimality property} of $\mathrm{Hdg}(T)$ allows us to give the following ``explicit'' description
of the complexification
$\mathrm{hdg}_{T,\BC}$ (compare with \cite[Lemma 6.3.1]{Z84}).

\begin{thm}
\label{minLie}
The complex Lie algebra $\mathrm{hdg}_{T,\BC}$  coincides with the Lie subalgebra $\mathfrak{u}$ of $ \End_{\BC}(\Lambda_{\BC})$
generated by all $s(J)$ where $s$ run over the group $\Aut(\BC)$. In particular, $\mathrm{hdg}_T$ coincides with the smallest $\BQ$-Lie subalgebra
$\mathfrak{g}\subset \End_{\BQ}(\Lambda_{\BQ})$ such that 
$$\mathfrak{g}\otimes_{\BQ}\BR\subset \End_{\BQ}(\Lambda_{\BQ})\otimes_{\BQ}\BR=\End_{\BR}(\Lambda_{\BR})$$
contains $J$.
\end{thm}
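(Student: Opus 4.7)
The plan is to prove the main equality $\mathrm{hdg}_{T,\BC}=\mathfrak{u}$ over $\BC$; the statement about the smallest $\BQ$-Lie subalgebra will then drop out by Galois descent. For the easy inclusion $\mathfrak{u}\subseteq\mathrm{hdg}_{T,\BC}$, I would observe that since $\mathrm{hdg}_T$ is a $\BQ$-Lie subalgebra of $\End_\BQ(\Lambda_\BQ)$, its complexification $\mathrm{hdg}_{T,\BC}$ is stable under the semilinear $\Aut(\BC)$-action on $\End_\BC(\Lambda_\BC)$. Because $J\in\mathrm{hdg}_{T,\BR}\subseteq\mathrm{hdg}_{T,\BC}$, all $s(J)$ lie in $\mathrm{hdg}_{T,\BC}$, and so does the Lie subalgebra $\mathfrak{u}$ they generate.

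For the reverse inclusion I would first verify that $\mathfrak{u}$ is itself $\Aut(\BC)$-stable by construction, since $s'(s(J))=(s's)(J)\in\mathfrak{u}$ and the bracket commutes with the $\Aut(\BC)$-action. Because the fixed field $\BC^{\Aut(\BC)}$ equals $\BQ$, standard Galois descent identifies $\Aut(\BC)$-stable $\BC$-subspaces of $\End_\BQ(\Lambda_\BQ)\otimes_\BQ\BC$ with the $\BQ$-subspaces of $\End_\BQ(\Lambda_\BQ)$. Applied to $\mathfrak{u}$, this yields $\mathfrak{u}=\mathfrak{u}_\BQ\otimes_\BQ\BC$ for the $\BQ$-Lie subalgebra $\mathfrak{u}_\BQ:=\mathfrak{u}\cap\End_\BQ(\Lambda_\BQ)$, and $J\in\mathfrak{u}\cap\End_\BR(\Lambda_\BR)=\mathfrak{u}_\BQ\otimes_\BQ\BR$. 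If $\mathfrak{u}_\BQ$ is an algebraic Lie subalgebra, then Remark~\ref{minLieLie} forces $\mathrm{hdg}_T\subseteq\mathfrak{u}_\BQ$ and hence $\mathrm{hdg}_{T,\BC}\subseteq\mathfrak{u}$; combined with the easy inclusion, this yields both $\mathrm{hdg}_{T,\BC}=\mathfrak{u}$ and $\mathrm{hdg}_T=\mathfrak{u}_\BQ$. The ``in particular'' statement then follows at once: for any $\BQ$-Lie subalgebra $\mathfrak{g}\subset\End_\BQ(\Lambda_\BQ)$ with $J\in\mathfrak{g}\otimes_\BQ\BR$, its complexification $\mathfrak{g}\otimes_\BQ\BC$ is $\Aut(\BC)$-stable and contains $J$, hence every $s(J)$, and therefore the entire Lie subalgebra $\mathfrak{u}$; descending gives $\mathfrak{g}\supseteq\mathfrak{u}_\BQ=\mathrm{hdg}_T$.

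The main obstacle is thus the algebraicity of $\mathfrak{u}_\BQ$, equivalently of $\mathfrak{u}$. To establish it, I would construct an algebraic envelope: each $s(J)$ is semisimple (since $s(J)^2=-1$), so $\BC\cdot s(J)$ is the Lie algebra of an algebraic $1$-dimensional torus $T_s\subset\GL(\Lambda_\BC)$, and the algebraic subgroup $H\subseteq\GL(\Lambda_\BC)$ generated by $\{T_s\}_{s\in\Aut(\BC)}$ is $\Aut(\BC)$-stable (hence defined over $\BQ$), reductive because generated by tori, and satisfies $H(\BR)\supseteq\exp(\BR\cdot J)=h(\mathbb{S}^1)$. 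The minimality of $\Hdg(T)$ then yields $\Hdg(T)_\BC\subseteq H$, so $\mathrm{hdg}_{T,\BC}\subseteq\Lie(H)$. The delicate step is to identify $\Lie(H)=\mathfrak{u}$: in general the Lie algebra of an algebraic group generated by algebraic subgroups equals only the \emph{algebraic hull} of the Lie subalgebra generated by their Lie algebras, but here the reductivity of $H$ together with the semisimplicity of every generator $s(J)$ forces the Lie algebra $\mathfrak{u}$ they generate to be already algebraic. This reductivity-closure step is the technical heart of the proof and is analogous to the argument in \cite[Lemma~6.3.1]{Z84}.
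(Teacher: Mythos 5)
Your skeleton matches the paper's proof step for step: the easy inclusion $\mathfrak{u}\subseteq\mathrm{hdg}_{T,\BC}$, the $\Aut(\BC)$-stability of $\mathfrak{u}$, descent to a $\BQ$-form $\mathfrak{u}_0=\mathfrak{u}\cap\End_{\BQ}(\Lambda_{\BQ})$ (this is exactly Lemma \ref{subspace}), the observation that $J\in\mathfrak{u}_0\otimes_{\BQ}\BR$, and the appeal to Remark \ref{minLieLie} once algebraicity of $\mathfrak{u}_0$ is known. The gap sits exactly where you locate it, the algebraicity of $\mathfrak{u}$, but the argument you sketch for that step does not work. A connected algebraic group generated by tori need not be reductive: in $\SL_2$ the Borel subgroup $B=T\ltimes U$ is generated by the diagonal torus $T$ and a single unipotent conjugate $uTu^{-1}$ with $u\in U$, since for $t=\mathrm{diag}(a,a^{-1})$ the element $t^{-1}\,(utu^{-1})$ is a nontrivial element of $U$. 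So ``reductive because generated by tori'' is false. Moreover, even granting reductivity of your envelope $H$, the concluding claim that reductivity plus semisimplicity of the generators ``forces $\mathfrak{u}$ to be already algebraic'' is not an argument; it restates the assertion to be proved without supplying a mechanism.

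The paper closes this step with two concrete inputs. First, each line $\BC\cdot s(J)$ is itself an algebraic Lie subalgebra: since $s(J)^2=-1$, the spectrum of $s(J)$ is $\{\mathbf{i},-\mathbf{i}\}$, whose $\BQ$-span in $\BC$ is the one-dimensional space $\BQ\cdot\mathbf{i}$, so every replica of $s(J)$ is a scalar multiple of $s(J)$. Second, Chevalley's theorem (\cite[vol.~2, Ch.~2, Sect.~14, Th.~14]{Chevalley}): in characteristic zero the Lie subalgebra generated by a family of algebraic Lie subalgebras is itself algebraic. This second point also dissolves the worry in your last paragraph about the ``algebraic hull'': the Lie algebra of the connected algebraic group generated by connected algebraic subgroups \emph{is} the Lie subalgebra generated by their Lie algebras, with no further hull to take. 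Replacing your reductivity detour by these two facts turns your outline into the paper's proof.
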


\begin{proof}
Clearly, $\mathfrak{u} \subset \mathrm{hdg}_{T,\BC}$. 
Let us prove that $\mathfrak{u}$ is an {\sl algebraic} complex Lie subalgebra of $\End_{\BC}(\Lambda_{\BC})$.

 Recall that
\begin{equation}
\label{Jsquare0}
J \in \End_{\BR}(\Lambda_{\BR})\subset \End_{\BC}(\Lambda_{\BC}); \ J^2=-1.
\end{equation}

Clearly, $J: \Lambda_{\BC} \to \Lambda_{\BC}$ is a semisimple $\BC$-linear operator, whose spectrum
consists of eigenvalues, $\mathbf{i}$ and $-\mathbf{i}$, because $J^2=-1$.
Similarly,  for all $s \in \Aut(\BC)$ the
$\BC$-linear operator $s(J):  \Lambda_{\BC} \to  \Lambda_{\BC}$ is also semisimple and its
spectrum is also $\{\mathbf{i},-\mathbf{i}\}$, because (in light of  \eqref{Jsquare0})
\begin{equation}
\label{Jsquare}
s(J)^2=s(J^2)=s(-1)=-1.
\end{equation}
 It follows that the $\BQ$-vector subspace
$\BQ(s(J))$ of $\BC$ generated by the {\sl spectrum} of $s(J)$ coincides with $\BQ\cdot \mathbf{i}$; in particular, the $\BQ$-vector (sub)space
$\BQ(s(J))$  is one-dimensional.  This implies that each $\BC\cdot s(J)$ is an algebraic
$\BC$-Lie subalgebra of $\End_{\BC}(\Lambda_{\BC})$, because each {\sl replica} of $s(J)$
is a scalar multiple of $s(J)$. Thus, the linear $\BC$-Lie algebra $\mathfrak{u}$  is generated by the  algebraic Lie subalgebras
$\BC\cdot \sigma(f)$ and therefore is algebraic itself, thanks to  \cite[volume 2, Ch. 2, Sect. 14, Th. 14]{Chevalley}.
Clearly, $\mathfrak{u}$ is $\Aut(\BC)$-invariant.
By Lemma \ref{subspace} (see below), 
 $\mathfrak{u}$ is
defined over $\BQ$, i.e., there is 
a $\BQ$-vector subspace
$$\mathfrak{u}_0 \subset \End_{\BQ}(\Lambda_{\BQ})$$
such that
$$\mathfrak{u}=\mathfrak{u}_0\otimes_{\BQ}\BC,   \quad \mathfrak{u}_0=\mathfrak{u} \cap   \End_{\BQ}(\Lambda_{\BQ}).$$
Since $\mathfrak{u}$ is a $\BC$-Lie subalgebra, the latter equality implies that $\mathfrak{u}_0$ is a $\BQ$-Lie subalgebra of  $\End_{\BQ}(\Lambda_{\BQ})$, and this Lie algebra is algebraic, in light of
\cite[volume 2, Ch. 2, Sect. 14, Prop. 4]{Chevalley}.
Clearly, 
$$\mathfrak{u}=\left(\mathfrak{u}_0\otimes_{\BQ}\BR\right)\oplus \mathbf{i} \cdot
\left(\mathfrak{u}_0\otimes_{\BQ}\BR\right) $$
as a real vector  space. This implies that
\begin{equation}
\label{QCR}
\mathfrak{u}_0\otimes_{\BQ}\BR=\mathfrak{u} \cap \End_{\BR}(\Lambda_{\BR}).
\end{equation}
Let $\mathcal{U}$ be the connected algebraic $\BQ$-subgroup of $\GL(\Lambda_{\BQ})$, whose Lie algebra coincides
with $\mathfrak{u}_0$. We need to prove that 
$$\mathfrak{u}_0=\mathrm{hdg}_T.$$
 Clearly, $\mathfrak{u}_0\subset \mathrm{hdg}_T$, because the comlexification of $\mathfrak{u}_0$
lies in the complexification of $\mathrm{hdg}_T$. We know that
$$J \in \mathfrak{u}_0\otimes_{\BQ}\BC=\mathfrak{u}.$$
 Since $J \in \End_{\BR}(\Lambda_{\BR})$, it follows from \eqref{QCR} that 
$$J\in \mathfrak{u}_0\otimes_{\BQ}\BR.$$
In light of Remark \ref{minLieLie}, $\mathfrak{u}_0\supset \mathrm{hdg}_T$.
This implies that $\mathfrak{u}_0=\mathrm{hdg}_T$, which ends the proof.
\end{proof}

\begin{cor}
\label{JtoF}
Let us put
$$f_T:=\frac{1}{\mathbf{i}}J \in \End_{\BC}(\Lambda_{\BC}).$$
Then
\begin{equation}
s(f_T)^2=1 \ \forall s\in \Aut(\BC)
\end{equation}
and the complex Lie algebra $\mathrm{hdg}_{T,\BC}$  coincides with the Lie subalgebra $\mathfrak{u}$ of $ \End_{\BC}(\Lambda_{\BC})$
generated by all $s(f_T)$ where $s$ run over the group $\Aut(\BC)$. In particular, $\mathrm{hdg}_T$ coincides with the smallest $\BQ$-Lie subalgebra
$\mathfrak{g}\subset \End_{\BQ}(\Lambda_{\BQ})$ such that 
$$\mathfrak{g}\otimes_{\BQ}\BC\subset \End_{\BQ}(\Lambda_{\BQ})\otimes_{\BQ}\BC=\End_{\BC}(\Lambda_{\BC})$$
contains $f_T$.
\end{cor}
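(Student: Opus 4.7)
The plan is to deduce this corollary as a direct consequence of Theorem \ref{minLie} by observing that $f_T$ and $J$ differ only by the scalar $1/\mathbf{i}$, and that any field automorphism of $\BC$ sends $\mathbf{i}$ to $\pm\mathbf{i}$.

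First I would verify the identity $s(f_T)^2 = 1$. Since $J^2 = -1$ and $(1/\mathbf{i})^2 = -1$, we get $f_T^2 = (1/\mathbf{i})^2 J^2 = (-1)(-1) = 1$ in $\End_{\BC}(\Lambda_{\BC})$. Applying any $s \in \Aut(\BC)$ (which acts as a ring homomorphism on $\End_{\BC}(\Lambda_{\BC})$) yields $s(f_T)^2 = s(f_T^2) = s(1) = 1$.

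Next I would show that the Lie subalgebra $\mathfrak{u}'$ generated by $\{s(f_T) : s \in \Aut(\BC)\}$ coincides with the Lie subalgebra $\mathfrak{u}$ generated by $\{s(J) : s \in \Aut(\BC)\}$. The key observation is that for any $s \in \Aut(\BC)$, the element $s(\mathbf{i})$ satisfies $s(\mathbf{i})^2 = s(-1) = -1$, so $s(\mathbf{i}) = \pm\mathbf{i}$ and hence $s(1/\mathbf{i}) = \pm 1/\mathbf{i}$ is a nonzero complex scalar. Therefore
\[
s(f_T) = s(1/\mathbf{i}) \cdot s(J) \in \BC^{*} \cdot s(J),
\]
so $\BC \cdot s(f_T) = \BC \cdot s(J)$ for every $s$. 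Consequently $\mathfrak{u}' = \mathfrak{u}$, and by Theorem \ref{minLie} this common Lie subalgebra equals $\mathrm{hdg}_{T,\BC}$.

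Finally, for the characterization of $\mathrm{hdg}_T$ itself, let $\mathfrak{g} \subset \End_{\BQ}(\Lambda_{\BQ})$ be a $\BQ$-Lie subalgebra. I would argue that $f_T \in \mathfrak{g} \otimes_{\BQ}\BC$ if and only if $J \in \mathfrak{g}\otimes_{\BQ}\BR$. The ``only if'' direction follows because $J = \mathbf{i} f_T \in \mathfrak{g}\otimes_{\BQ}\BC$, and $J$ lies in $\End_{\BR}(\Lambda_{\BR})$, so $J$ belongs to $(\mathfrak{g}\otimes_{\BQ}\BC)\cap \End_{\BR}(\Lambda_{\BR}) = \mathfrak{g}\otimes_{\BQ}\BR$ (using the standard fact, applied already in the proof of Theorem \ref{minLie} via Lemma \ref{subspace}, that the real points of a $\BQ$-rational subspace recover its $\BR$-form). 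The ``if'' direction is immediate since $f_T = (1/\mathbf{i})J \in \mathfrak{g}\otimes_{\BQ}\BC$. Combining this equivalence with the last assertion of Theorem \ref{minLie} gives the final claim. No step presents a real obstacle; the proof is essentially a translation from $J$ to its rescaling $f_T$, with the only nontrivial input being that $\Aut(\BC)$ permutes $\{\mathbf{i},-\mathbf{i}\}$.
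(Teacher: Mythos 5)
Your proof is correct and follows essentially the same route as the paper: both verify $s(f_T)^2=1$ from $J^2=-1$, observe that $s(\mathbf{i})=\pm\mathbf{i}$ forces $\BC\cdot s(f_T)=\BC\cdot s(J)$, and then invoke Theorem \ref{minLie}. Your explicit check that $f_T\in\mathfrak{g}\otimes_{\BQ}\BC$ if and only if $J\in\mathfrak{g}\otimes_{\BQ}\BR$ merely spells out a step the paper leaves implicit, using the same rationality fact (cf. \eqref{QCR}) already established in the proof of Theorem \ref{minLie}.
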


\begin{proof} Since $\mathbf{i}=\sqrt{-1}$,
\begin{equation}
\label{JiF}
s(\mathbf{i})=\pm \mathbf{i}, s(f_T)=\pm\mathbf{i}\cdot  s(J) \ \forall s \in \Aut(\BC).
\end{equation}
Therefore 
$$s(f_T)^2=\left(\pm\mathbf{i}\cdot  s(J)\right)^2=-s(J)^2.$$
It follows from \eqref{Jsquare} that
$$s(f_T)^2= - (-1)=1,$$
which proves our first assertion. It follows from \eqref{JiF} that
$$\BC \cdot s(f_T)=\BC \cdot  s(J) \ \forall s \in \Aut(\BC).$$
Now our second assertion follows from Theorem \ref{minLie}.

\end{proof}

The following definition was actually introduced in  \cite[Definition 1.1]{Z91}.

\begin{defn}

Let $C$ be a field, $k$ a subfield of $C$, and $\mathfrak{g}$ a $k$-Lie algebra. Let $\mathfrak{g}_C$ be the corresponding $C$-Lie algebra $\mathfrak{g}\otimes_k C$. 

If $f \in \mathfrak{g}_C$ then the {\sl Hodge ideal} attached to $f$ is the {\sl smallest ideal} 
$\mathrm{hmt}=\mathrm{hmt}(f)$ 
\footnote{Here $\mathrm{hmt}$ is short for Hodge-Mumford-Tate.}
of  $\mathfrak{g}$
such that
$$\mathrm{hmt}_C:=\mathrm{hmt}\otimes_k C$$
contains $f$.  We call $f$ a {\sl Hodge element} of $\mathfrak{g}$ if $\mathrm{hmt}(f)=\mathfrak{g}$.
\end{defn}

\begin{rem}
\label{HodgeElement}
Let $\mathfrak{a}$ be the {\sl smallest ideal} of $\mathrm{hdg}_{T}$ such that
$\mathfrak{a}_{\BC}:=\mathfrak{a}\otimes_{\BQ}\BC$ contains $f_T$.  Clearly, 
$\mathfrak{a}_{\BC}$ contains $s(f_T)$ for all $s\in \Aut(\BC)$. It follows from Corollary \ref{JtoF}
that $\mathfrak{a}_{\BC}=\mathrm{hdg}_{T,\BC}$. This implies that 
$$\mathfrak{a}=\mathrm{hdg}_{T}.$$
The latter equality means that $f_T$ is a {\sl Hodge element} of the $\BQ$-Lie algebra $\mathrm{hdg}_{T}$
 where
$$k=\BQ,  \quad C=\BC.$$
\end{rem}

For the sake of simplicity, from now on let us assume that $T$ is {\sl simple}.  This means that the natural faithful representation of $\mathrm{Hdg}(T)$ in 
$$\Lambda_{\BQ}=\mathrm{H}_1(T,\BQ)$$
 is {\sl irreducible} and therefore $\End^0(T)$ is a division algebra over $\BQ$. This implies that the $\BQ$-algebraic (sub)group  $\mathrm{Hdg}(T)$  is {\sl reductive}. In addition, the $\BQ$-Lie (sub)algebra
$$\mathrm{hdg}_{T} \subset \sll\left(\mathrm{H}_1(T,\BQ)\right)\subset
\End_{\BQ}\left(\mathrm{H}_1(T,\BQ)\right)$$
is {\sl reductive algebraic}, the faithful $\mathrm{hdg}_{T}$-module $\mathrm{H}_1(T,\BQ)$ is simple and the centralizer of $\mathrm{hdg}_{T}$ in $\End_{\BQ}\left(\mathrm{H}_1(T,\BQ)\right)$ is the division $\BQ$-algebra $\End^0(T)$. Then the {\sl center} $\mathcal{Z}(T)$ of  $\End^0(T)$ is a {number field}.

Let us split the {\sl reductive} $\BQ$-Lie algebra $\mathrm{hdg}_{T} $ into a direct sum
$$\mathrm{hdg}_{T} =\mathrm{hdg}_{T}^{\mathrm{ss}} \oplus \mathfrak{c}_T$$
of the semisimple $\BQ$-Lie algebra 
$$\mathrm{hdg}_{T}^{\mathrm{ss}}=[\mathrm{hdg}_T ,\mathrm{hdg}_{T} ]$$
and the {\sl center} $\mathfrak{c}_{T}$ of $\mathrm{hdg}_{T}$ with
$$\mathfrak{c}_{T}\subset \mathcal{Z}(T)\subset \End^0(T).$$

The following useful assertion  is well known in the case of abelian varieties.

\begin{lem}
\label{center}
Suppose that $T$ is simple and  $\mathcal{Z}(T)=\BQ$ (e.g., $\End^0(T)=\BQ$).  Then $\mathfrak{c}_{T}=\{0\}$, i.e., the $\BQ$-Lie algebra is semisimple and therefore $\mathrm{Hdg}(T)$ is a semisimple $\BQ$-algebraic group.
\end{lem}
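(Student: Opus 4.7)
The plan is to combine two ingredients already collected in the excerpt: the location of $\mathfrak{c}_T$ inside $\End^0(T)$, and the fact that every element of $\mathrm{hdg}_T$ is traceless. Together these will force the central part to vanish.

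First I would record the inclusions that have already been established. Since $T$ is simple, $\End^0(T)$ is a division $\BQ$-algebra, $\mathrm{Hdg}(T)$ is reductive, and the centralizer of $\mathrm{hdg}_T$ in $\End_{\BQ}(\Lambda_{\BQ})$ equals $\End^0(T)$ by \eqref{hdgC}. Consequently $\mathfrak{c}_T$, being the center of $\mathrm{hdg}_T$, is contained in the centralizer of $\mathrm{hdg}_T$; since $\mathfrak{c}_T$ also commutes with itself inside $\mathrm{hdg}_T$, and $\mathrm{hdg}_T$ generates (as a Lie algebra) the same ring centralizer, one finds $\mathfrak{c}_T \subset \mathcal{Z}(T)$, the center of $\End^0(T)$. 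By hypothesis $\mathcal{Z}(T)=\BQ$, so every element of $\mathfrak{c}_T$ is a rational multiple of the identity operator $\mathrm{Id}_{\Lambda_{\BQ}}\in \End_{\BQ}(\Lambda_{\BQ})$.

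Next I would invoke \eqref{hdgSL0}, which says $\mathrm{hdg}_T \subset \sll(\Lambda_{\BQ})$; in particular every $x \in \mathfrak{c}_T$ satisfies $\mathrm{tr}(x)=0$. Writing $x = c \cdot \mathrm{Id}_{\Lambda_{\BQ}}$ with $c \in \BQ$, one gets $0 = \mathrm{tr}(x) = 2g \cdot c$, hence $c=0$ since $g\ge 1$. Therefore $\mathfrak{c}_T = \{0\}$, so $\mathrm{hdg}_T = \mathrm{hdg}_T^{\mathrm{ss}}$ is semisimple, and since the Hodge group is connected with Lie algebra $\mathrm{hdg}_T$, the algebraic $\BQ$-group $\mathrm{Hdg}(T)$ is semisimple.

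There is no real obstacle here: the only small subtlety is the verification that $\mathfrak{c}_T$ actually lies in the center of the associative algebra $\End^0(T)$ and not merely in some apriori larger subalgebra. This is immediate, however, because $\mathfrak{c}_T\subset \mathrm{hdg}_T\subset \End^0(T)$ is false in general---what is true is the weaker statement $\mathfrak{c}_T\subset \End_{\mathrm{hdg}_T}(\Lambda_{\BQ}) = \End^0(T)$ (central elements commute with all of $\mathrm{hdg}_T$), and within $\End^0(T)$ they commute further with $\mathrm{hdg}_T$, placing them in $\mathcal{Z}(T)$; this is the only step that uses the hypothesis $\mathcal{Z}(T)=\BQ$, and it reduces the rest of the argument to the one-line trace computation above.
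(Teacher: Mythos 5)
Your proof is correct and follows exactly the paper's route: the paper likewise deduces the lemma from the two inclusions $\mathfrak{c}_{T}\subset \mathcal{Z}(T)=\BQ$ and $\mathfrak{c}_{T}\subset \mathrm{hdg}_{T}\subset \sll(\mathrm{H}_1(T,\BQ))$, so a traceless rational multiple of the identity must vanish. The only difference is that you re-derive the inclusion $\mathfrak{c}_T\subset\mathcal{Z}(T)$ (correctly, though the justification in your last paragraph is worded a bit loosely — the point is that $\mathfrak{c}_T$ lies in both $\mathrm{hdg}_T$ and its centralizer $\End^0(T)$, so it commutes with all of $\End^0(T)$), whereas the paper records this inclusion as part of the setup preceding the lemma.
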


\begin{proof}
The result follows readily from the combination of  inclusions
$$\mathfrak{c}_{T}\subset \mathcal{Z}(T)=\BQ, \
\mathfrak{c}_{T}\subset \mathrm{hdg}_{T} \subset \sll\left(\mathrm{H}_1(T,\BQ)\right).$$
\end{proof}

The next example deals with the opposite case when the endomorphism algebra of a simple torus $T$ is a number field of (largest possible) degree $2\dim(T)$.

 \begin{ex}
 \label{CMtorusT}
 Suppose that a complex torus $T=V/\Lambda$ is simple of dimension $g$ and $\End^0(T)$ is a number field $E$ of degree $2g$. Then $\Lambda_{\BQ}$ becomes a one-dimensional vector space over $E$. Therefore
 $$E=\End_E(\Lambda_{\BQ}), \ E^{*}=\Aut_E(\Lambda_{\BQ}).$$
  This implies that 
 \begin{equation}
 \label{CMnorm}
 \Hdg(T)(\BQ) \subset E^{(1)}:=\{e\in E^{*}\mid \mathrm{Norm}_{E/\BQ}(e)=1\}\subset E^{*}=\Aut_E(\Lambda_{\BQ}).
 \end{equation}
 Here
 $$\mathrm{Norm}_{E/\BQ}: E^{*} \to \BQ^{*},  \ e \mapsto \prod_{\sigma\in \Sigma_E}\sigma(e)$$
 is the {\sl norm homomorphism} of the multiplicative groups of fields attached to the field extension $E/\BQ$
 
 Let $\mathcal{S}_{E}=\mathrm{Res}_{E/\BQ}(\mathbb{G}_m)$ be the $2g$-dimensional {\sl algebraic} torus over $\BQ$ obtained from the multiplicative group $\mathbb{G}_m$ by the Weil's restriction of scalars from $E$ to $\BQ$.  
  Then $\mathcal{S}_{E}(\BQ)=E^{*}$ and for each $\sigma \in \Sigma_E$ there is a
 certain character
 $$\delta_{\sigma}: \bar{\mathcal{S}}_{E}:=\mathcal{S}_{E}\times_{\BQ} \bar{\BQ} \to \mathbb{G}_m \times_{\BQ} \bar{\BQ}$$
 of the algebraic torus $\bar{\mathcal{S}_{E}}$ over $\bar{\BQ}$ 
 such that the restriction of $\delta_{\sigma}$ to
 $$E^{*}=\mathcal{S}_{E}(\BQ)\subset \mathcal{S}_{E}(\bar{\BQ})=\bar{\mathcal{S}}_{E}(\bar{\BQ})$$
 coincides with
 $$\sigma: E^{*}\hookrightarrow \bar{\BQ}^{*}.$$
 In addition, the $2g$-element set $\{\delta_{\sigma}\mid \sigma\in \Sigma_K\}$ constitutes a {\sl basis} of the free $\BZ$-module 
  $\mathrm{X}(\bar{\mathcal{S}}_{E})$ of {\sl characters of the algebraic torus} $\bar{\mathcal{S}}_{E}$ over $\bar{\BQ}$.
  Since $\mathcal{S}_{E}$ is defined over $\BQ$, the group  $\mathrm{X}(\bar{\mathcal{S}}_{E})$  is
  provided with the natural structure of a $\Gal(\BQ)$-module in such a way that
 \begin{equation}
 \label{GactionX}
 s(\delta_{\sigma})=\delta_{s(\sigma)} \ \forall \sigma\in \Sigma_E, s \in \Gal(\BQ)
 \end{equation}
(\cite[Ch. II, Sect. 1]{SerreAbelian}, \cite[Ch. III, Sect. 5 and 6]{Vos}).
Clearly, the character 
$$\chi=\prod_{\sigma\in\Sigma_E}\delta_{\sigma} \in \mathrm{X}(\bar{\mathcal{S}}_{E})$$
is  $\Gal(\BQ)$-invariant and may be viewed as the character of $\mathcal{S}_{E}$ such that
$$\chi(e)=\mathrm{Norm}_{E/\BQ}(e)\in \BQ^{*} \quad \forall e \in E^{*}=\mathcal{S}_{E}(\BQ).$$
Let us put
$$\mathcal{S}_{E}^{1}=\ker(\chi).$$
Since $\chi$ is obviously non-divisible in the group of characters, $\mathcal{S}_{E}^{1}$ is an algebraic $\BQ$-subtorus of dimension $2g-1$ in $\mathcal{S}_{E}$ such that
\begin{equation}
\label{SE1BQ}
\mathcal{S}_{E}^{1}(\BQ)=\ker(\mathrm{Norm}_{E/\BQ})=E^{(1)}.
\end{equation}
Combining \eqref{CMnorm} and \eqref{SE1BQ}, and taking into account that  $\Hdg(T)(\BQ)$ is Zariski dense in $\Hdg(T)$, we conclude that
\begin{equation}
\label{Hdg1E}
\Hdg(T)\subset \mathcal{S}_{E}^{1}.
\end{equation}
In particular, if $\mathcal{S}_{E}^{1}$ is a {\sl simple} algebraic torus over $\BQ$ then 
$$\Hdg(T)= \mathcal{S}_{E}^{1}.$$

By definition of $\mathcal{S}_{E}^{1}$, the Galois module   $\mathrm{X}(\bar{\mathcal{S}}_{E}^1)$
of characters of the algebraic $\bar{\BQ}$-torus 
$$\bar{\mathcal{S}}_{E}^1=\mathcal{S}_{E}^{1}\times_{\BQ}\bar{\BQ}$$
 is the quotient 
 $\mathrm{X}(\bar{\mathcal{S}}_{E})/(\BZ\cdot \chi)$. It follows from \eqref{GactionX} that
 the $\Gal(\BQ)$-module $\mathrm{X}(\bar{\mathcal{S}}_{E}^1)$ is isomorphic to the quotient
 $\BZ^{\Sigma_E}/\BZ\cdot {\mathbf 1}$ where $\BZ^{\Sigma_E}$ is the free $\BZ$-module
 of functions $\phi: \Sigma_E \to \BZ$ and $\mathbf{1}$ is the constant function $1$. It follows easily that
 the Galois module  $\mathrm{X}(\bar{\mathcal{S}}_{E}^1)\otimes\BQ$ is isomorphic to the $\BQ$-vector space 
 $$\big(\BQ^{\Sigma_E}\big)^0:=\{\phi: \Sigma_E \to \BQ\mid \sum_{\sigma\in \Sigma_E}\phi(\sigma)=0\}$$
of $\BQ$-valued functions on $\Sigma_E$ with zero ``integral''.  Recall that the action of $\Gal(E)$ on $\Sigma_E$
is transitive and this action induces the structure of the Galois module on $\big(\BQ^{\Sigma_E}\big)^0$. Notice
that if the action of $\Gal(\BQ)$ on  $\big(\BQ^{\Sigma_E}\big)^0$ is {\sl doubly transitive} (i.e., $E$ is doubly transitive) then the representation
of $\Gal(\BQ)$ in  $\big(\BQ^{\Sigma_E}\big)^0$ is {\sl irreducible}, i.e., the Galois module $\mathrm{X}(\bar{\mathcal{S}}_{E}^1)\otimes\BQ$ is 
simple, which means that the algebraic $\BQ$-torus  $\mathcal{S}_{E}^{1}$ is simple and therefore $\Hdg(T)= \mathcal{S}_{E}^{1}$. So we have proven that
\begin{equation}
\label{simpleDouble}
\Hdg(T)= \mathcal{S}_{E}^{1} 
\end{equation}
if $E$ is {\sl doubly transitive}. In particular, the algebraic $\BQ$-torus is simple.
\end{ex}

\begin{thm}
\label{Double2simple}
Let $T=V/\Lambda$ be a simple complex torus of dimension $g>2$ such that its endomorphism algebra is a number field $E$ of degree $2g$ that 
is doubly transitive.

Then:

\begin{itemize}
\item[(i)]
The Hodge group $\Hdg(T)$ of $T$ coincides with  $\mathcal{S}_{E}^{1}$. In addition,  $\Hdg(T)$  is a simple algebraic $\BQ$-torus of dimension $2g-1$.
\item[(ii)]
The $\Hdg(T)$-module $\mathrm{H}^2(T,\BQ)$ is simple. In particular, $T$ is $2$-simple.

\end{itemize}

\end{thm}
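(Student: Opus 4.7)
Part (i) is essentially a reformulation of what has already been established in Example \ref{CMtorusT}: under the hypothesis that $E$ is doubly transitive, the paragraph leading up to equation \eqref{simpleDouble} shows that the Galois module $X(\bar{\mathcal{S}}_E^1)\otimes\BQ \cong \big(\BQ^{\Sigma_E}\big)^0$ is $\Gal(\BQ)$-irreducible, whence $\mathcal{S}_E^1$ is a simple algebraic $\BQ$-torus of dimension $2g-1$, and \eqref{simpleDouble} gives $\Hdg(T) = \mathcal{S}_E^1$.

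For part (ii), my plan is to analyze $\HH^2(T,\BC)$ as a module over the complexified torus. Since $\Lambda_\BQ$ is a one-dimensional $E$-vector space, we have the weight decomposition
$$\Lambda_\BC \;=\; \bigoplus_{\sigma\in\Sigma_E} \BC_\sigma,$$
with each $\BC_\sigma$ one-dimensional and $\mathcal{S}_E$ acting on $\BC_\sigma$ through the character $\delta_\sigma$. Consequently,
$$\HH^2(T,\BC) \;=\; \Hom_\BC(\wedge^2_\BC \Lambda_\BC,\BC) \;=\; \bigoplus_{\{\sigma,\tau\}\subset\Sigma_E,\ \sigma\neq\tau} L_{\{\sigma,\tau\}}$$
is a direct sum of $\binom{2g}{2}$ one-dimensional weight spaces, and $\bar{\mathcal{S}}_E$ acts on $L_{\{\sigma,\tau\}}$ via the character $-(\delta_\sigma+\delta_\tau)$. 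Restricting to $\bar{\mathcal{S}}_E^1$ amounts to passing to the quotient $X(\bar{\mathcal{S}}_E)/\BZ\chi$.

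The two key verifications are then: (a) the images $-\overline{\delta_\sigma+\delta_\tau}$ in $X(\bar{\mathcal{S}}_E^1)$, indexed by two-element subsets $\{\sigma,\tau\}\subset \Sigma_E$, are pairwise distinct; and (b) $\Gal(\BQ)$ permutes these characters transitively. Point (a) is a direct basis computation: since $\{\delta_\rho\}_{\rho\in\Sigma_E}$ is a $\BZ$-basis of $X(\bar{\mathcal{S}}_E)$, an equality $(\delta_{\sigma'}+\delta_{\tau'})-(\delta_\sigma+\delta_\tau)=n\chi=n\sum_\rho\delta_\rho$ forces $n=0$ by comparing coefficients at uninvolved indices. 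Point (b) is immediate from the double transitivity of $E$, which (cf.\ Remarks \ref{almostT}) implies transitivity on two-element subsets via \eqref{GactionX}. Since the characters of $\bar{\mathcal{S}}_E^1$ appearing in $\HH^2(T,\BC)$ thus form a single $\Gal(\BQ)$-orbit, the standard description of simple rational representations of a $\BQ$-torus (as corresponding to Galois orbits of characters of the split form) shows that $\HH^2(T,\BQ)$ is a simple $\Hdg(T)$-module.

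To conclude 2-simplicity, I will use that rational Hodge substructures of $\HH^2(T,\BQ)$ are precisely the $\Hdg(T)$-invariant $\BQ$-subspaces (this is the standard extension of property (iii) in Section \ref{HDG} to tensor constructions). Combined with simplicity of the $\Hdg(T)$-module $\HH^2(T,\BQ)$, any nonzero rational Hodge substructure $H$ must be all of $\HH^2(T,\BQ)$, and then trivially $H_\BC = \HH^2(T,\BC) \supseteq \HH^{2,0}(T)$, which is stronger than the defining condition of Definition \ref{simple2}. I expect the main technical point is the clean statement of the Galois/character correspondence for rational representations of an algebraic torus; the rest of the argument reduces to the transitivity bookkeeping made available by the doubly transitive hypothesis.
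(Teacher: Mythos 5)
Your proposal is correct and follows essentially the same route as the paper: part (i) is quoted from Example \ref{CMtorusT}, and part (ii) decomposes the second cohomology (the paper works with $\wedge^2_{\BQ}\Lambda_{\BQ}$ over $\bar{\BQ}$ and dualizes, you work with $\Hom_{\BC}(\wedge^2_{\BC}\Lambda_{\BC},\BC)$ directly) into one-dimensional weight spaces, checks that the characters $\delta_\sigma^{(1)}+\delta_\tau^{(1)}$ of $\bar{\mathcal{S}}_E^1$ are pairwise distinct, and uses double transitivity to see that they form a single Galois orbit, whence the $\Hdg(T)$-module is simple. Your explicit basis computation for the distinctness of the characters modulo $\BZ\cdot\chi$ is a valid (and slightly more detailed) version of the paper's observation that $\prod_\sigma\delta_\sigma^{(1)}=1$ is the only nontrivial multiplicative relation.
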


\begin{proof}
We keep the notation of Example \ref{CMtorusT} where the assertion (i) and the simplicity of $\Hdg(T)$ are already proven.

In order to prove (ii), notice that $\mathrm{H}^2(T,\BQ)=\Hom_{\BQ}(\wedge^2_{\BQ}\Lambda_{\BQ},\BQ)$, so, it suffices to check that the $\Hdg(T)$-module 
$\wedge^2_{\BQ}\Lambda_{\BQ}$ is simple.

If $\sigma \in \Sigma_E$ then let us consider the character  $\delta_{\sigma}^{(1)}$ of $\bar{\mathcal{S}}_{E}^1$ that is the restriction of the character $\delta_{\sigma}$ to 
$\bar{\mathcal{S}}_{E}^1$. Clearly, 
$$\prod_{\sigma\in \Sigma_E}\delta_{\sigma}^{(1)}=1 \in \mathrm{X}(\bar{\mathcal{S}}_{E}^1)$$
and this is the only ``nontrivial'' multiplicative relation between $\delta_{\sigma}^{(1)}$. In particular, if $A$ and $B$ are two {\sl distinct} $2$-element subsets of $\Sigma_E$ then
\begin{equation}
\label{AneB}
\delta_A^{1}:=\prod_{\sigma\in A}\delta_{\sigma}^{(1)} \ne \prod_{\sigma\in B}\delta_{\sigma}^{(1)}=:\delta_B^{1}.
\end{equation}
In other words, $\delta_A^{1}$ and $\delta_B^{1}$ are {\sl distinct} characters of $\bar{\mathcal{S}}_{E}^1$.

Let us fix an order on the $2g$-element set $\Sigma_E$ and consider the $\bar{\BQ}$-vector space
$$\bar{\Lambda}=\Lambda_{\BQ}\otimes_{\BQ}\bar{\BQ},$$
which is endowed with the natural faithful action of  $\bar{\mathcal{S}}_{E}^1$, and splits into a direct sum
$$\bar{\Lambda}=\oplus_{\sigma \in \Sigma_E}\bar{\Lambda}_{\sigma}$$
of {\sl one-dimensiona}l weight subspaces $\bar{\Lambda}_{\sigma}$ defined by the condition that 
$\bar{\mathcal{S}}_{E}^1$ acts on $\bar{\Lambda}_{\sigma}$ by the character $\delta_{\sigma}^{(1)}$.

We have
$$\wedge^2_{\BQ}\Lambda_{\BQ}\otimes_{\BQ}\bar{\BQ}=\wedge^2_{\bar{\BQ}}\bar{\Lambda}=
\oplus_{A}\bar{\Lambda}_A,$$
where  
$$A=\{\sigma,\tau\} \subset  \Sigma_E; \ \sigma<\tau$$
runs through the set of all 2-element subset of $\Sigma_E$ and
$$ \quad \bar{\Lambda}_A=\bar{\Lambda}_{\sigma}\wedge_{\bar{\BQ}} \bar{\Lambda}_{\tau}\cong\bar{\Lambda}_{\sigma}\otimes_{\bar{\BQ}} \bar{\Lambda}_{\tau}$$ is the corresponding one-dimensional $\bar{\mathcal{S}}_{E}^1$-invariant subspace;
 the action of $\bar{\mathcal{S}}_{E}^1$ on $\bar{\Lambda}_A$ is defined by the character $\delta_A^{1}$.

It follows from  \eqref{AneB} that if $W$ is a nonzero $\mathcal{S}_{E}^{1}$-invariant $\BQ$-vector subspace of $\Lambda_{\BQ}$ then $\bar{W}=W\otimes_{\BQ}\bar{\BQ}$ is a direct sum of some of $\bar{\Lambda}_A$.
The double transitivity condition implies that all $\bar{\Lambda}_A$'s are mutually 
Galois-conjugate over $\BQ$. It follows that  $\bar{W}=\wedge^2_{\bar{\BQ}}\bar{\Lambda}$, i.e.,
$W=\wedge^2_{\BQ}\Lambda_{\BQ}$ and we are done.

\end{proof}

\begin{rem}
See \cite{BZ} for explicit examples of complex tori that satisfy the conditions of Theorem \ref{Double2simple}.

\end{rem}

In the case of arbitrary simple complex tori (or even abelian varieties) the Hodge group may be neither semisimple nor commutative (see \cite{XuZ1,XuZ2,Xu} for explicit examples).  This is not the case for 2-simple tori in dimensions $>2$, in light of the following assertion.
\begin{prop}
\label{simpleCom}
Let $T$ be a $2$-simple torus of dimension $g >2$. (In particular, $T$ is simple.)
Then  $\mathrm{Hdg}(T)$ is either semisimple  or commutative.
The latter case occurs if and only if $\End^0(T)$ is a  number field of degree $2g$.

\end{prop}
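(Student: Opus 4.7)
The plan is to use Theorem~\ref{main}(iii) to split into three cases according to $[E:\BQ]\in\{1,g,2g\}$, where $E=\End^0(T)$. If $[E:\BQ]=1$, then $\mathcal{Z}(T)=\BQ$ and Lemma~\ref{center} immediately yields that $\mathrm{Hdg}(T)$ is semisimple. If $[E:\BQ]=2g$, then Example~\ref{CMtorusT} places $\mathrm{Hdg}(T)\subset\mathcal{S}_E^1$, which is a $\BQ$-algebraic torus, so $\mathrm{Hdg}(T)$ is commutative. Both extreme cases are handled by results already in place.

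The substantive case is $[E:\BQ]=g$, where $\Lambda_\BQ$ is a $2$-dimensional $E$-vector space and $d_E=2$. I would return to the Hodge substructure $H_E\subset\HH^2(T,\BQ)$ introduced in the proof of Theorem~\ref{main}(ii). Given the constraints $n_\sigma\in\{0,1,2\}$, $n_\sigma+n_{\bar\sigma}=2$, and $\sum_\sigma n_\sigma=g$, each complex-conjugate pair $\{\sigma,\bar\sigma\}$ contributes at most $1$ to $\dim_\BC H_E^{(2,0)}=\sum_\sigma n_\sigma(n_\sigma-1)/2$, while real embeddings contribute $0$. Hence $\dim_\BC H_E^{(2,0)}\le s_E\le g/2$; since $g>2$ gives $g/2<g(g-1)/2$, $2$-simplicity forces $\dim_\BC H_E^{(2,0)}=0$, so $H_E\subset\HH^{1,1}(T,\BQ)$. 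Equivalently, $\mathrm{hdg}_T$ annihilates every $E$-alternating bilinear form on $\Lambda_\BQ$. Picking a nondegenerate such form $\phi_E$ (which exists since $\dim_E\Lambda_\BQ=2$), we obtain
$$
\mathrm{hdg}_T\subset\mathrm{Lie}\bigl(\mathrm{Sp}_E(\phi_E)\bigr)=\mathrm{Lie}\bigl(\mathrm{Res}_{E/\BQ}\mathrm{SL}_2\bigr),
$$
that is, $\mathrm{hdg}_T$ consists of $E$-linear endomorphisms of $\Lambda_\BQ$ with $E$-trace zero. Since $\mathfrak{c}_T\subset\mathcal{Z}(T)=E$ and an element $e\in E$ acts on $\Lambda_\BQ$ as an $E$-scalar with $E$-trace $2e$, the inclusion forces $2e=0$ for every $e\in\mathfrak{c}_T$, hence $\mathfrak{c}_T=0$ and $\mathrm{Hdg}(T)$ is semisimple.

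For the ``if and only if'', one direction is just the $[E:\BQ]=2g$ case above. Conversely, suppose $\mathrm{Hdg}(T)$ is commutative; then $\mathrm{hdg}_T$ lies in its own centralizer $E$ and acts on $\Lambda_\BQ$ through $E$-scalars. If $[E:\BQ]\in\{1,g\}$, then $\dim_E\Lambda_\BQ=2g/[E:\BQ]\ge 2$, so any proper nonzero $E$-subspace of $\Lambda_\BQ$ would be $\mathrm{hdg}_T$-invariant, hence (by property~(iii) of the Hodge group listed in \S\ref{HDG}) a nontrivial rational Hodge substructure of $\Lambda_\BQ$; this contradicts the simplicity of $T$ asserted in Theorem~\ref{main}(i). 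Thus $[E:\BQ]=2g$. The main obstacle I anticipate is bookkeeping rather than anything deep: correctly excluding the value $g(g-1)/2$ for $\dim_\BC H_E^{(2,0)}$ via the arithmetic of the multiplicities $n_\sigma$, and translating the ``preserves every $E$-alternating form'' statement into the clean Lie-algebra inclusion inside $\mathrm{Res}_{E/\BQ}\mathrm{SL}_2$ while matching conventions with those used earlier in the paper.
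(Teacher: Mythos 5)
Your proposal is correct and follows essentially the same route as the paper: the case split via Theorem~\ref{main}(iii), Lemma~\ref{center} for degree $1$, commutativity from $\dim_E\Lambda_{\BQ}=1$ for degree $2g$, and in the degree-$g$ case the key step that $2$-simplicity forces $H_E^{(2,0)}=0$, placing $\mathrm{hdg}_T$ inside the $E$-traceless operators $\sll(\mathrm{H}_1(T,\BQ)/E)$ and killing $\mathfrak{c}_T$ via $\mathrm{Tr}_E(c)=2c$. Your bookkeeping with the multiplicities $n_\sigma$ and the symplectic-form phrasing are only cosmetic variants of the paper's bound $\dim_{\BC}H_E^{(2,0)}\le\dim_{\BQ}(H_E)/2=g/2$ and its use of the character $\det_E$, and your explicit converse (commutative implies $\mathrm{hdg}_T\subset E$, forcing $\dim_E\Lambda_{\BQ}=1$ by simplicity) is a fine way to close the ``only if''.
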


\begin{proof}
We know (thanks to Theorem \ref{main}) that $E=\End^0(T)$ is a number field of degree 
$$[E:\BQ] \in \{1, g, 2g\}.$$

If $[E:\BQ]=1$ then $E=\BQ$. In light of Lemma \ref{center},  $\mathrm{Hdg}(T)$ is  semisimple.

If $[E:\BQ]=2g$ then $\Lambda_{\BQ}=\mathrm{H}_1(T,\BQ)$ is a one-dimensional $E$-vector space, i.e.,
$$E=\End_E\left(\mathrm{H}_1(T,\BQ)\right).$$
This implies that
$$\mathrm{hdg}_{T} \subset \End_E\left(\mathrm{H}_1(T,\BQ)\right)\subset E$$
and therefore $\mathrm{hdg}_{T}$ is a commutative $\BQ$-Lie algebra. It follows that $\mathrm{Hdg}(T)$ is commutative.

Assume that $[E:\BQ]=g$, i.e.,  $\Lambda_{\BQ}=\mathrm{H}_1(T,\BQ)$ is a two-dimensional $E$-vector space. Then
$$\mathrm{hdg}_{T}\subset \End_E\left(\mathrm{H}_1(T,\BQ)\right)\supset E \supset
\mathfrak{c}_T.$$
Let 
\begin{equation}
\label{TrE}
\mathrm{Tr}_E:  \End_E\left(\mathrm{H}_1(T,\BQ)\right) \to E
\end{equation}
be the (surjective) $E$-linear trace map, which is a homomorphism of $\BQ$-Lie algebras
(here we view $E$ as a commutative $\BQ$-Lie algebra); in addition, the restriction of $\mathrm{Tr}_E$ to $E$ is multiplication by
$$\dim_E\left(\mathrm{H}_1(T,\BQ)\right) =2.$$
We write $\sll\left(\mathrm{H}_1(T,\BQ)/E\right)$ for  $\ker(\mathrm{Tr}_E)$, which is an absolutely simple $E$-Lie algebra of traceless $E$-linear operators in $\mathrm{H}_1(T,\BQ)$.
(Viewed as the $\BQ$-Lie algebra, $\sll\left(\mathrm{H}_1(T,\BQ)/E\right)$  is  simple but not absolutely simple.)

On the other hand,  let
$${\det}_E: \Aut_E\left(\mathrm{H}_1(T,\BQ)\right) \to E^{*}$$
be the multiplicative {\sl determinant} map. Clearly,
$$\mathrm{Hdg}(T)(\BQ) \subset \Aut_E\left(\mathrm{H}_1(T,\BQ)\right)$$
and the group  $\Aut_E\left(\mathrm{H}_1(T,\BQ)\right)$ acts naturally on the one-dimensional $E$-vector space
$\Hom_E\left(\wedge^2_E \mathrm{H}_1(T,\BQ),E\right)$
 via the character $\det_E$.
I claim that ${\det}_E$ kills $\mathrm{Hdg}(T)(\BQ)$. Indeed, if this is not the case, then
the rational Hodge substructure $H_E \cong \Hom_E\left(\wedge^2_E \mathrm{H}_1(T,\BQ),E\right)$  
of $\mathrm{H}^2(T,\BQ)$ 
(defined in Section \ref{proofMain}, see \eqref{psiE} and \eqref{HE2})
has {\sl nonzero} $(2,0)$-component, whose $\BC$-dimension
$$\le \frac{\dim_{\BQ}(H_E)}{2}=\frac{[E:\BQ]}{2} =\frac{g}{2}<g,$$
which contradicts the $2$-simplicity of $T$.  Hence, ${\det}_E$ kills $\mathrm{Hdg}(T)(\BQ)$.
Taking into account that $\mathrm{Hdg}(T)(\BQ)$ is dense in $\Hdg(T)$ in Zariski topology and the minimality property in the definition of the Hodge group, we conclude that
$$\mathrm{Hdg}(T)\subset \mathrm{Res}_{E/\BQ}\SL(\left(\mathrm{H}_1(T,\BQ)/E\right)$$
where $\SL(\left(\mathrm{H}_1(T,\BQ)/E\right)$ is the special linear group of the $E$-vector space $\mathrm{H}_1(T,\BQ)$, which is a {\sl simple algebraic $E$-group}, and
$\mathrm{Res}_{E/\BQ}$ is the {\sl Weil restriction} of scalars. Taking into account that
the $\BQ$-Lie algebra $\sll\left(\mathrm{H}_1(T,\BQ)/E\right)$ is the Lie algebra of the $\BQ$-algebraic group
$\mathrm{Res}_{E/\BQ}\SL\left(\mathrm{H}_1(T,\BQ)/E\right)$, we conclude that
\begin{equation}
\label{slE}
\mathrm{hdg}_{T} \subset \sll\left(\mathrm{H}_1(T,\BQ)/E\right) \cong
\sll(2,E).
\end{equation}
In particular, $\mathrm{Tr}_E$ kills $\mathfrak{c}_T$. Since $\mathfrak{c}_T\subset E$,
$$0=\mathrm{Tr}_E(c)=2c \ \forall c \in \mathfrak{c}_T\subset E.$$
This implies that $ \mathfrak{c}_T=\{0\}$, i.e., $\mathrm{hdg}_{T}$ is semisimple, i.e., $\mathrm{Hdg}(T)$ is semisimple. This ends the proof.
\end{proof}

The following assertion may be viewed as a variant of a theorem of P. Deligne \cite{Deligne}
about abelian varieties (see also    \cite{SerreRennes} and \cite{Z84,Z86}).

\begin{thm}
\label{deligneT}
Let $T$ be a simple complex torus.
Let $\mathrm{hdg}_T \subset \End_{\BQ}(\Lambda_{\BQ})$ be the (reductive) $\BQ$-Lie algebra of $\mathrm{Hdg}(T)$,
whose natural representation in $\Lambda_{\BQ}$ is irreducible.

Let $\mathfrak{g}$ be a simple (non-abelian) factor of the complex reductive Lie algebra
$$\mathrm{hdg}_{T,\BC}=\mathrm{hdg}_T\otimes_{\BQ}\BC \subset \End_{\BQ}(\Lambda_{\BQ})\otimes_{\BQ}\BC
=\End_{\BC}(\Lambda_{\BC})=\End_{\BC}\left(V_{\BC}\right).$$

Then

\begin{itemize}
\item[(i)]
The simple complex Lie algebra $\mathfrak{g}$ is of classical type (${\sf{A}}_l, {\sf{B}}_l,{\sf{C}}_l,{\sf{D}}_l$) of a certain positive rank $l$.

\item[(ii)]

Let $W$ be a nontrivial simple $\mathfrak{g}$-submodule of $V_{\BC}$.  Then its highest weight
is minuscule one.  
\end{itemize}
\end{thm}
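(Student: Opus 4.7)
The plan is to run a Deligne-type argument \cite{Deligne} using the Hodge element $f_T=J/\mathbf{i}$ of Corollary \ref{JtoF} in place of the Hodge cocharacter of the abelian variety case. First I would record the two key features of $f_T$: it lies in $\mathrm{hdg}_{T,\BC}$ and satisfies $f_T^2 = 1$, so it is semisimple on $V_\BC$ with eigenvalues in $\{\pm1\}$. By Corollary \ref{JtoF}, all $\Aut(\BC)$-conjugates $s(f_T)$ likewise satisfy $s(f_T)^2 = 1$ and they generate $\mathrm{hdg}_{T,\BC}$ as a Lie algebra; in particular each $s(f_T)$ acts on $V_\BC$ with at most two distinct eigenvalues.

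Next, I would reduce to a single simple factor. Decompose the reductive Lie algebra
$$\mathrm{hdg}_{T,\BC}=\mathfrak{c}_{T,\BC}\oplus\bigoplus_\alpha \mathfrak{g}_\alpha$$
into center and simple factors and let $\mathfrak{g}$ be the given simple factor. Complete reducibility places the simple $\mathfrak{g}$-submodule $W\subset V_\BC$ inside a $\mathrm{hdg}_{T,\BC}^{\mathrm{ss}}$-isotypic component, which is an outer tensor product of simple $\mathfrak{g}_\alpha$-modules. Writing $s(f_T)=z_s+\sum_\beta h_{s,\beta}$ with $z_s\in\mathfrak{c}_{T,\BC}$ and $h_{s,\beta}\in\mathfrak{g}_\beta$, Schur's lemma implies that the restriction of $s(f_T)$ to $W$ differs from that of $h_s:=h_{s,\mathfrak{g}}\in\mathfrak{g}$ only by a scalar. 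Hence each $h_s|_W$ is semisimple with at most two distinct eigenvalues, and since the $h_s$ generate $\mathfrak{g}$, at least one of them is non-zero.

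Finally, I would invoke the following classification result, essentially due to Deligne: if a simple complex Lie algebra $\mathfrak{g}$ carries a non-trivial simple finite-dimensional module $W$ on which some non-zero element of $\mathfrak{g}$ acts with at most two distinct eigenvalues, then $\mathfrak{g}$ is of classical type ($\mathsf{A}_l,\mathsf{B}_l,\mathsf{C}_l,\mathsf{D}_l$) and the highest weight of $W$ is minuscule. This yields both (i) and (ii) simultaneously.

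The main obstacle is precisely this classification lemma. My plan for it is a weight-theoretic argument: after conjugation, place $h$ in a Cartan subalgebra and rescale so the two eigenvalues are $0$ and $1$; then the pairing of $h$ with every weight of $W$ lies in $\{0,1\}$, and a Weyl group orbit analysis forces the highest weight to be a minuscule fundamental weight. The types $\mathsf{G}_2,\mathsf{F}_4,\mathsf{E}_8$ possess no non-trivial minuscule weights and are ruled out immediately; the residual exceptional cases $\mathsf{E}_6,\mathsf{E}_7$ are to be excluded by exploiting that the entire $\Aut(\BC)$-orbit of $f_T$ (and not merely a single element) must consist of squares-to-one operators that jointly generate $\mathfrak{g}$ and are compatible with the $\BQ$-rational structure of $\mathrm{hdg}_T$ together with the $2$-simplicity constraint of Remark \ref{translation}(ii), a rigidity stronger than what the minuscule representations of $\mathsf{E}_6,\mathsf{E}_7$ can accommodate.
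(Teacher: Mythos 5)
Your overall strategy coincides with the paper's: the entire published proof consists of applying Proposition 2.4.1 of \cite{Z84} to the Hodge element $f_T$ furnished by Corollary \ref{JtoF}, and your reduction to a single simple factor is the standard first step of that argument. That reduction is essentially sound, although the appeal to Schur's lemma is stated too loosely: $s(f_T)$ need not preserve the simple submodule $W$ itself, so it does not literally ``differ from $h_s$ by a scalar on $W$''; the correct formulation is that on the $W$-isotypic component $W_0\otimes M$ one has $s(f_T)=h_s\otimes 1+1\otimes m_s$ with commuting semisimple summands, whence every eigenvalue of $h_s$ on $W_0$ shifted by a fixed eigenvalue of $m_s$ lies in $\{1,-1\}$, so $h_s|_{W}$ is semisimple with at most two distinct eigenvalues. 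The genuine gap is in the classification lemma you then invoke, which is where all the content lies. First, as you state it (``some non-zero element of $\mathfrak{g}$ acts with at most two distinct eigenvalues'') it is false: a nonzero nilpotent element acts on every module with the single eigenvalue $0$. The hypothesis must be a nonzero \emph{semisimple} element --- which your reduction does provide, but the lemma has to be stated and proved with that hypothesis.

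Second, your plan for the hardest part is not viable as described. Theorem \ref{deligneT} assumes only that $T$ is \emph{simple}; $2$-simplicity is not a hypothesis, so the ``rigidity'' argument you propose via Remark \ref{translation}(ii) to exclude ${\sf E}_6$ and ${\sf E}_7$ is not available. Nor is it needed: the exclusion is purely representation-theoretic. If a nonzero semisimple $h$ has only two eigenvalues on a faithful simple module whose weights form a single Weyl orbit, then every root takes values in $\{0,\pm c\}$ on $h$, forcing $h$ (up to scaling and Weyl conjugacy) to be the fundamental coweight at a node where the highest root has coefficient $1$; for the $27$ of ${\sf E}_6$ and the $56$ of ${\sf E}_7$ these coweights have $3$ and $4$ distinct eigenvalues respectively, so no such $h$ exists. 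Likewise, the ``Weyl group orbit analysis'' that is supposed to force minuscule highest weights in the classical cases is exactly the nontrivial case analysis that must be carried out, and you do not carry it out. The paper sidesteps all of this by citing \cite[Prop.~2.4.1]{Z84}; the ``local'' statement your argument actually needs (a simple Lie algebra containing a nonzero semisimple operator with two eigenvalues in a faithful simple module is classical with minuscule highest weight) is the subject of \cite{Z86}. Either cite one of these results or supply the full weight-theoretic classification; as written, the proof is incomplete at its central step.
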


\begin{proof}
The result follows readily from 
Corollary \ref{JtoF} combined with Proposition 2.4.1 of \cite{Z84} applied to
$$k=\BC, k_0=\BQ, \ W=\Lambda_{\BQ}, \ \mathfrak{g}=\mathrm{hdg}_T, \ f =f_T, \ A=\{1,-1\}$$
and 
$$n=1, \ a_0=1, \ a_1=-1.$$
\end{proof}

The following assertion may be viewed as a variant of a theorem   of M.V. Borovoi 
about abelian varieties \cite{Bor}, see also \cite{Z91}.
\begin{thm}
\label{BorT}
Suppose that $T$ is a simple complex torus with $\End^0(T)=\BQ$.

Then its Hodge group $\mathrm{Hdg}(T)$ is a $\BQ$-simple linear algebraic group,
i.e., its $\BQ$-Lie algebra $\mathrm{hdg}_T$ is simple.
\end{thm}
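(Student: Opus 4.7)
The plan is to adapt Borovoi's argument for abelian varieties to the present setting, using the Hodge element $f_T$ provided by Corollary \ref{JtoF}. Since $\End^0(T)=\BQ$, its center $\mathcal{Z}(T)=\BQ$, so Lemma \ref{center} tells us that $\mathrm{hdg}_T$ is semisimple. Moreover, the centralizer of $\mathrm{hdg}_T$ in $\End_{\BQ}(\Lambda_{\BQ})$ equals $\End^0(T)=\BQ$, so a standard Schur-type argument shows that the $\mathrm{hdg}_{T,\BC}$-module $\Lambda_{\BC}$ is \emph{absolutely simple}.

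Suppose for contradiction that $\mathrm{hdg}_T$ is not $\BQ$-simple. Since it is semisimple, one may write $\mathrm{hdg}_T = \mathfrak{h}_1 \oplus \mathfrak{h}_2$ as a direct sum of two nonzero $\BQ$-ideals (grouping simple factors if needed). Each $\mathfrak{h}_i$ is semisimple and $[\mathfrak{h}_{1,\BC},\mathfrak{h}_{2,\BC}]=0$ in $\End_{\BC}(\Lambda_{\BC})$. By the standard tensor-product theorem for absolutely simple modules over a direct sum of semisimple Lie algebras, one obtains a factorization $\Lambda_{\BC} = W_1 \otimes_{\BC} W_2$ where each $W_i$ is an absolutely simple $\mathfrak{h}_{i,\BC}$-module.

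Next, invoke Corollary \ref{JtoF}: the element $f_T = J/\mathbf{i}$ lies in $\mathrm{hdg}_{T,\BC}$ with $f_T^2 = 1$, and by Remark \ref{HodgeElement} it is a Hodge element of $\mathrm{hdg}_T$. Write $f_T = f_1 + f_2$ with $f_i \in \mathfrak{h}_{i,\BC}$. Since $f_T^2=1$, $f_T$ is semisimple. Applying Jordan decomposition inside the semisimple $\BC$-Lie algebra $\mathrm{hdg}_{T,\BC}$, the nilpotent parts of $f_1$ and $f_2$ lie in the complementary ideals $\mathfrak{h}_{1,\BC}$ and $\mathfrak{h}_{2,\BC}$ and sum to zero, hence both vanish. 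So each $f_i$ is semisimple; being commuting, they are simultaneously diagonalizable. On $\Lambda_{\BC} = W_1 \otimes W_2$, $f_T$ acts as $f_1 \otimes 1 + 1 \otimes f_2$, and its spectrum is the set of sums $\{\lambda+\mu:\lambda\in\mathrm{Spec}(f_1|_{W_1}),\ \mu\in\mathrm{Spec}(f_2|_{W_2})\}$, which must lie in $\{1,-1\}$.

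A short combinatorial check finishes the argument: if both $f_1|_{W_1}$ and $f_2|_{W_2}$ had two distinct eigenvalues $\lambda_1\ne\lambda_2$ and $\mu_1\ne\mu_2$, then from $\{\lambda_1+\mu_j,\lambda_2+\mu_j\}=\{1,-1\}$ for $j=1,2$ one deduces $\mu_1=\mu_2$, a contradiction. Hence, say, $f_2$ has a single eigenvalue $\mu$ on $W_2$, so $f_2 = \mu\cdot\mathrm{id}$ on $\Lambda_{\BC}$. But a nonzero scalar operator lying in the semisimple Lie algebra $\mathfrak{h}_{2,\BC}$ would be central and hence force $\mathfrak{h}_{2,\BC}$ to have nontrivial center, contradicting semisimplicity. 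Thus $f_2=0$, so $f_T=f_1\in \mathfrak{h}_{1,\BC}$, and the smallest ideal of $\mathrm{hdg}_T$ whose complexification contains $f_T$ lies in $\mathfrak{h}_1\subsetneq \mathrm{hdg}_T$, contradicting the Hodge-element property of $f_T$. The main obstacle is the Jordan-decomposition step ensuring the $f_i$ are simultaneously diagonalizable; the remaining eigenvalue calculation is in the spirit of \cite{Z91} and is straightforward once the tensor factorization is in place.
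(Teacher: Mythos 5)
Your proof is correct. It sets up exactly the same reduction as the paper: semisimplicity of $\mathrm{hdg}_T$ via Lemma \ref{center}, absolute irreducibility of $\Lambda_{\BC}$ from $\End^0(T)=\BQ$, and the fact (Remark \ref{HodgeElement}, Corollary \ref{JtoF}) that $f_T$ is a Hodge element whose spectrum is $\{1,-1\}$. The difference is in how the conclusion is drawn from these three facts: the paper simply invokes Theorem 1.5 of \cite{Z84} as a black box, whereas you supply a self-contained argument --- splitting $\mathrm{hdg}_T=\mathfrak{h}_1\oplus\mathfrak{h}_2$, factoring $\Lambda_{\BC}\cong W_1\otimes_{\BC}W_2$, using uniqueness of the Jordan decomposition to see that each component $f_i$ of $f_T$ is semisimple, and then running the two-eigenvalue count to force $f_2=0$, which contradicts the Hodge-element property. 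Your inlined argument is essentially the mechanism behind the cited result (it is the ``small number of eigenvalues'' argument of \cite{Z84,Z86}), so what you gain is a proof readable without consulting \cite{Z84}, at the cost of length; each step you flag as delicate (the Jordan-decomposition compatibility with the ideal decomposition, and the scalar-forces-zero step using faithfulness plus semisimplicity, or alternatively tracelessness) does go through.
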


\begin{proof}

Clearly, $\mathrm{hdg}_T$ is a semisimple $\BQ$-Lie algebra, whose natural faithful representation in $\Lambda_{\BQ}$
is {\sl absolutely irreducible}. By Remark \ref{HodgeElement},  
$f_T\in \mathrm{hdg}_{T,\BC}$ is a Hodge element of $\mathrm{hdg}_T$. The spectrum of the linear
semisimple operator $f_T$ in $\Lambda_{\BQ}$ consists of precisely two eigenvalues, $1$ and $-1$.
Now it follows from Theorem 1.5 of \cite{Z84} that $\mathrm{hdg}_T$ is simple.  This means that
$\mathrm{Hdg}(T)$ is a $\BQ$-simple algebraic group.

\end{proof}

\begin{cor}
\label{BorTabs}
Suppose that $T$ is a simple complex torus of dimension $g$ with $\End^0(T)=\BQ$.  Assume also
that $2g$ is not a power (e.g., $g$ is odd).

Then $\mathrm{Hdg}(T)$ is an absolutely simple $\BQ$-algebraic group that enjoys precisely one of the
following two properties.

\begin{itemize}
\item
$\mathrm{Hdg}(T)$ is 
of type ${\sf{A}}_{2g-1}, {\sf{C}}_g,{\sf{D}}_g$.

\item
$\mathrm{Hdg}(T)$ is 
of type  ${\sf{A}}_r$ where $r$ is a
 positive integer that enjoys the following properties.

$1<r<2g-1$ and there is an integer $j$ such that $1<j<2g-1$ and $2g=\binom{r+1}{j}$.
\end{itemize}
\end{cor}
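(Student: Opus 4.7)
The plan is to upgrade the $\BQ$-simplicity provided by Theorem \ref{BorT} to absolute simplicity via an arithmetic dimension count, and then to enumerate the minuscule representations of classical simple Lie algebras of dimension $2g$.

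First, by Theorem \ref{BorT}, $\mathrm{hdg}_T$ is a simple $\BQ$-Lie algebra. Standard Galois descent for simple Lie algebras presents it uniquely as $\mathrm{hdg}_T = \mathrm{Res}_{K/\BQ}(\mathfrak{h})$, where $K/\BQ$ is a finite extension of degree $m:=[K:\BQ]\ge 1$ and $\mathfrak{h}$ is an absolutely simple $K$-Lie algebra. Complexifying,
$$\mathrm{hdg}_{T,\BC} = \bigoplus_{\sigma\in\Sigma_K}\mathfrak{g}_\sigma, \qquad \mathfrak{g}_\sigma := \mathfrak{h}\otimes_{K,\sigma}\BC,$$
and $\Aut(\BC)$ permutes the factors $\mathfrak{g}_\sigma$ transitively via its action on $\Sigma_K$.

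Next, $\End^0(T)=\BQ$ combined with \eqref{hdgC} says that the centralizer of $\mathrm{hdg}_{T,\BC}$ in $\End_{\BC}(V_{\BC})$ equals $\BC$, so $V_{\BC}$ is an absolutely irreducible $\mathrm{hdg}_{T,\BC}$-module. The standard description of irreducible representations of a finite direct sum of simple Lie algebras gives
$$V_{\BC}\cong \bigotimes_{\sigma\in\Sigma_K}W_\sigma,$$
where each $W_\sigma$ is an irreducible $\mathfrak{g}_\sigma$-module, which is minuscule by Theorem \ref{deligneT}. Since the $W_\sigma$ are Galois conjugates of each other, they share a common dimension $d$, and $d\ge 2$ because each $\mathfrak{g}_\sigma$ acts faithfully. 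Consequently $2g = \dim_{\BC}V_{\BC} = d^m$, and the hypothesis that $2g$ is not a perfect power forces $m=1$, i.e., $\mathrm{hdg}_T$ is absolutely simple.

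With absolute simplicity in hand, Theorem \ref{deligneT} implies that $\mathrm{hdg}_T$ is of classical type ${\sf{A}}_l$, ${\sf{B}}_l$, ${\sf{C}}_l$, or ${\sf{D}}_l$ and that $V_{\BC}$ is an irreducible minuscule representation of dimension $2g$. I then match $2g$ against the list of minuscule dimensions: $\binom{l+1}{j}$ for the fundamental representation $\wedge^j\BC^{l+1}$ of ${\sf{A}}_l$ with $1\le j\le l$; $2^l$ for the spin representation of ${\sf{B}}_l$; $2l$ for the standard representation of ${\sf{C}}_l$; and $2l$ (vector) or $2^{l-1}$ (half-spin) for ${\sf{D}}_l$. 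Since $2g$ is in particular not a power of $2$, ${\sf{B}}_l$ and the half-spin representations of ${\sf{D}}_l$ are eliminated. The cases $j\in\{1,l\}$ in ${\sf{A}}_l$ both force $l=2g-1$; types ${\sf{C}}_l$ and vector-type ${\sf{D}}_l$ force $l=g$; and the remaining cases $1<j<l$ in ${\sf{A}}_l$ give the second bullet, with $r:=l$ satisfying $1<r<2g-1$ thanks to the strict inequality $\binom{r+1}{j}>r+1$ valid for $2\le j\le r-1$ and $r\ge 3$. The main obstacle is the passage from $\BQ$-simplicity to absolute simplicity: $\BQ$-simplicity alone permits $\mathrm{hdg}_T$ to be the Weil restriction $\mathrm{Res}_{K/\BQ}(\mathfrak{h})$ across a nontrivial extension, and the arithmetic hypothesis on $2g$ is exactly what rules out the nontrivial factorization $2g=d^m$ with $d,m\ge 2$.
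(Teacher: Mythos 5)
Your proposal is correct and follows essentially the same route as the paper: Theorem \ref{BorT} gives $\BQ$-simplicity, the transitive $\Aut(\BC)$-action on the simple factors of $\mathrm{hdg}_{T,\BC}$ forces the tensor factors of the absolutely irreducible module $V_{\BC}$ to have a common dimension $d$, so $2g=d^m$ and the hypothesis kills $m>1$; the conclusion then follows from the classification of minuscule representations exactly as in the paper. The only cosmetic difference is that you phrase the non-absolutely-simple case via Galois descent as a Weil restriction $\mathrm{Res}_{K/\BQ}(\mathfrak{h})$, whereas the paper works directly with the decomposition of $\mathrm{hdg}_{T,\BC}$ into simple summands permuted by $\Aut(\BC)$.
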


\begin{proof}
By Theorem \ref{BorT}, $\mathrm{hdg}_T$ is a simple $\BQ$-Lie algebra.  Suppose that $\mathrm{hdg}_T$ is {\sl not} absolutely simple,
i.e., the complex Lie algebra $\mathrm{hdg}_{T,\BC}$ splits into a direct sum
$$\mathrm{hdg}_{T,\BC}=\oplus_{\ell=1}^d \mathfrak{g}_{\ell}$$
of $d$ simple complex Lie algebras $\mathfrak{g}_{\ell}$ where $d>1$.  We are going to prove that $2g$ is a $d$th power, which gives us a desired contradiction. 

The simplicity of $\mathrm{hdg}_T$ means that $\Aut(\BC)$ permutes the set $\{\mathfrak{g}_{\ell}\}_{\ell=1}^d$
{\sl transitively}. Namely, each $s \in \Aut(\BC)$ gives rise to the semi-linear automorphism of the $\BC$-vector space
$$ \Lambda_{\BC} \to \Lambda_{\BC},  \quad x\otimes z \mapsto x\otimes s(z) \ \forall x \in \Lambda_{\BQ}, z\in \BC$$
and to the semi-linear automorphism of the $\BC$-algebra
$$\End_{\BC}(\Lambda_{\BC}) \to \End_{\BC}(\Lambda_{\BC}),  u\otimes z \mapsto u \otimes s(z) \  \forall x \in \End_{\BQ}(\Lambda_{\BQ}), z\in \BC.$$
We continue to denote those automorphisms by $s$.

The transitivity means
that for each $\mathfrak{g}_{\ell}$ there is $s_{\ell}\in \Aut(\BC)$ such that
$$\mathfrak{g}_{\ell}=s_{\ell}\left(\mathfrak{g}_{1}\right).$$
We know that the $\mathrm{hdg}_{T,\BC}$-module $\Lambda_{\BC}$ is (absolutely) simple. Since each $\mathfrak{g}_{\ell}$
is a direct summand of  $\mathrm{hdg}_{T,\BC}$, the $\mathfrak{g}_{\ell}$-module $\Lambda_{\BC}$ is isotypic, i.e., there is
a simple $\mathfrak{g}_{\ell}$-submodule $W_{\ell}\subset \Lambda_{\BC}$ such that all simple $\mathfrak{g}_{\ell}$-submodules of $\Lambda_{\BC}$
are isomorphic to $W_{\ell}$.  In addition, the $\mathrm{hdg}_{T,\BC}\left(=\oplus_{\ell=1}^d \mathfrak{g}_{\ell}\right)$-module $\Lambda_{\BC}$ splits into a
tensor product $\otimes_{\ell=1}^d W_{\ell}$ of simple $\mathfrak{g}_{\ell}$-modules $W_{\ell}$. Let us prove that $\dim_{\BC}(W_{\ell})$ does {\sl not} depend on $\ell$.

Indeed,  $s_{\ell}(W_1)$ is a simple $\mathfrak{g}_{\ell}$-submodule of $\Lambda_{\BC}$ and therefore is isomorphic to $W_1$. This implies that
$\dim(W_1)=\dim(W_{\ell})$ and therefore 
$$2g=\dim_{\BC}(\Lambda_{\BC})=\left(\dim_{\BC}(W_1)\right)^d,$$
which gives us a desired contradiction. So,  $\mathrm{hdg}_{T,\BC}$ is a {\sl simple} complex Lie algebra and $\Lambda_{\BC}$ is a faithful simple $\mathrm{hdg}_{T,\BC}$-module.
 By Theorem \ref{deligneT},  $\mathrm{hdg}_{T,\BC}$  is a classical Lie algebra (of type
$\sf{A}_r, \sf{B}_r,\sf{C}_r$ or $\sf{D}_r$), and the highest weight of   $\Lambda_{\BC}$  is  {\sl minuscule}.
The remaining assertion follows readily from the inspection of dimensions of minuscule representations of classical Lie algebras
\cite[Tables]{Bourbaki78}.

\end{proof}

\begin{ex}
\label{classicalH}
Suppose that $T$ is a complex torus of dimension $g$ such that one of the following conditions holds. \begin{itemize}
\item[(i)]
$\mathrm{Hdg}(T)=\SL(\Lambda_{\BQ})$.

\item[(ii)] There exists a nondegenerate quadratic form 
$$\phi: \Lambda_{\BQ} \to \BQ$$
of even signature $(2p,2q)$ with $p+q=g\ge 3$
such that $\mathrm{Hdg}(T)$ coincides with the corresponding special orthogonal group.
$\mathrm{SO}(\Lambda_{\BQ},\phi)$.
\item[(iii)] There exists a nondegenerate alternating $\BQ$-bilinear form
$$\Lambda_{\BQ}\times \Lambda_{\BQ} \to \BQ$$
such that $\mathrm{Hdg}(T)$ coincides with the corresponding symplectic group
$\mathrm{Sp}(\Lambda_{\BQ})$.
\end{itemize}

Then $T$ is $2$-simple.

Indeed, in the cases (i) and (ii), (in the obvious notation) the natural representation of $\SL(\Lambda_{\BC})$ (resp. $\mathrm{SO}(\Lambda_{\BC})$) in $\wedge_{\BC}^2(\Lambda_{\BC})$ is  irreducible, see \cite[Ch. 8, Sect. 13]{Bourbaki78}. This implies that the natural representation of $\SL(\Lambda_{\BQ})$ (resp. $\mathrm{SO}(\Lambda_{\BQ},\phi)$) in $\wedge_{\BQ}^2(\Lambda_{\BQ})$ is absolutely irreducible. By duality, the same is true for $\Hom_{\BQ}(\wedge_{\BQ}^2(\Lambda_{\BQ}),\BQ)=\mathrm{H}^2(T,\BQ)$, i.e., the $\mathrm{Hdg}(T)$-module $\mathrm{H}^2(T,\BQ)$ is simple. This implies that  $T$ is $2$-simple. Notice that in these cases we deal with simple complex tori that are {\sl not} abelian varieties (since they do not carry nonzero 2-dimensional Hodge classes), and whose endomorphism algebra is $\BQ$.

In the case (iii), the natural representation of $\Sp(\Lambda_{\BC})$  in  $\wedge_{\BC}^2(\Lambda_{\BC})$ is  a direct sum of an irreducible representation and a trivial one-dimension representation \cite[Tables]{VO}. This implies that the natural representation of  $\Sp(\Lambda_{\BQ})$  in $\wedge_{\BQ}^2(\Lambda_{\BQ})$ is  a direct sum of an absolutely irreducible representation and a trivial one-dimension representation. By duality, the same is true for $\Hom_{\BQ}(\wedge_{\BQ}^2(\Lambda_{\BQ}),\BQ)=\mathrm{H}^2(T,\BQ)$, i.e., the $\mathrm{Hdg}(T)$-module $\mathrm{H}^2(T,\BQ)$ is a direct sum of an absolutely  simple module and a trivial module of $\BQ$-dimension $1$ The latter consists of all $\Hdg(T)$-invariants in $\mathrm{H}^2(T,\BQ)$, i.e., coincides with
$\mathrm{H}^{1,1}(T,\BQ)$. The former is an irreducible rational Hodge structure.  It follows from Remark \ref{translation}  (ii) that $T$ is 2-simple.\footnote{For abelian varieties $T$ the case (iii)  was done in \cite[Sect. 5.1]{AC}.} See \cite{ZarhinPrym} for explicit examples (in all dimensions) of complex abelian varieties $T$ with $\mathrm{Hdg}(T)=\mathrm{Sp}(\Lambda_{\BQ})$.

\end{ex}

\begin{thm}
\label{existH}
Let $\Pi_{\BQ}$ be a $\BQ$-vector space of positive even dimension $2g$, and $\mathcal{G}$ a $\BQ$-simple algebraic subgroup of $\GL(\Pi_{\BQ})$,
whose $\BQ$-Lie algebra $\mathfrak{g}$ may be viewed as a simple $\BQ$-Lie subalgebra of $\End_{\BQ}(\Pi_{\BQ})$. Let us consider the real Lie subalgebra
$$\mathfrak{g}_{\BR}=\mathfrak{g}\otimes_{\BQ}\BR \subset \End_{\BQ}(\Pi_{\BQ})\otimes_{\BQ}\BR=\End_{\BR}(\Pi_{\BR})$$
where $\Pi_{\BR}=\Pi_{\BQ}\otimes_{\BQ}\BR$ is the corresponding $2g$-dimensional real vector space. Suppose that there exists
an element
$$J_0 \in \mathfrak{g}_{\BR}\subset \End_{\BR}(\Pi_{\BR})$$
such that $J_0^2=-1$ in $ \End_{\BR}(\Pi_{\BR})$. 
Then there exists $J \in  \mathfrak{g}_{\BR}$ that enjoys the following properties.
\begin{itemize}
\item[(i)]
 $J^2=-1$.
 \item[(ii)]
Let us endow $\Pi_{\BR}$ with the structure of a $g$-dimensional complex vector space by defining
$$(a+b\mathbf{i}) v:=av+b J(v) \quad \forall a+b\mathbf{i} \in \BC \text{ with }a,b \in\BR.$$
Then for every discrete subgroup $\Lambda$ of rank $2g$ in $\Pi_{\BQ}$ the corresponding complex torus $T=\Pi_{\BR}/\Lambda$
has Hodge group $\mathcal{G}$.
\end{itemize}
\end{thm}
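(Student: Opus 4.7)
The plan is to produce the required $J$ by a generic-point argument in the real algebraic variety
$$X := \{J \in \mathfrak{g}_{\BR} : J^2 = -1\},$$
which is non-empty (it contains $J_0$) and stable under $\mathrm{Ad}(\mathcal{G}(\BR))$. For each $J \in X$ and each rank-$2g$ lattice $\Lambda \subset \Pi_{\BQ}$, using $J$ to endow $\Pi_{\BR}$ with a complex structure, the torus $T = \Pi_{\BR}/\Lambda$ has, by Remark \ref{minLieLie}, Hodge Lie algebra equal to the minimal algebraic $\BQ$-Lie subalgebra $\mathfrak{h}(J) \subset \End_{\BQ}(\Pi_{\BQ})$ whose real extension contains $J$. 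This $\mathfrak{h}(J)$ depends only on $J$, not on $\Lambda$, and since $J \in \mathfrak{g}_{\BR}$ with $\mathfrak{g}$ algebraic and $\BQ$-rational, $\mathfrak{h}(J) \subset \mathfrak{g}$ automatically. The aim is to find $J \in X$ achieving $\mathfrak{h}(J) = \mathfrak{g}$, in which case $\Hdg(T) = \mathcal{G}$ for every $\Lambda$.

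I would invoke a Baire category argument. The proper algebraic $\BQ$-Lie subalgebras of $\mathfrak{g}$ form a countable family $\{\mathfrak{h}_n\}_{n \ge 1}$ (being $\BQ$-rational subspaces of the finite-dimensional $\BQ$-space $\mathfrak{g}$), and the condition $\mathfrak{h}(J) \subsetneq \mathfrak{g}$ is equivalent to $J \in \mathfrak{h}_{n,\BR}$ for some $n$. Thus the \emph{bad set}
$$X_{\mathrm{bad}} = \bigcup_{n \ge 1} \left(X \cap \mathfrak{h}_{n,\BR}\right)$$
is a countable union of closed real algebraic subsets of $X$. If each such subset is a proper subset of the connected component of $J_0$ in $X$, then Baire category produces $J \in X \setminus X_{\mathrm{bad}}$ arbitrarily close to $J_0$; the torus built from such a $J$ and any lattice $\Lambda$ satisfies $\Hdg(T) = \mathcal{G}$, giving (ii), while (i) is built into the choice of $J$.

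The main obstacle is the non-containment $X \not\subset \mathfrak{h}_{n,\BR}$ for each proper algebraic $\BQ$-Lie subalgebra $\mathfrak{h}_n$. I would start from the $\mathrm{Ad}(\mathcal{G}(\BR)^0)$-orbit $\mathcal{O}$ of $J_0$, a connected real-analytic submanifold of $X$. Its $\BR$-linear span is $\mathrm{ad}(\mathfrak{g}_{\BR})$-stable, hence an $\BR$-ideal of $\mathfrak{g}_{\BR}$ containing the nonzero $J_0$; when $\mathfrak{g}_{\BR}$ is $\BR$-simple this span is all of $\mathfrak{g}_{\BR}$, so no proper subspace can contain $\mathcal{O}$, completing the argument. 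The delicate case is when $\mathfrak{g}$ is $\BQ$-simple but not absolutely simple, so $\mathfrak{g}_{\BR} = \bigoplus_i \mathfrak{g}^{(i)}$ splits into several $\BR$-simple summands: then $\mathcal{O}$ only spans the sum of those $\mathfrak{g}^{(i)}$ on which $J_0$ has nonzero projection, and one must exploit the $\BQ$-rationality of $\mathfrak{h}_n$. Here Theorem \ref{minLie} is the key tool: $\mathfrak{h}_{n,\BC}$ is $\Aut(\BC)$-stable, and by $\BQ$-simplicity of $\mathfrak{g}$ the group $\Aut(\BC)$ permutes the simple summands of $\mathfrak{g}_{\BC}$ transitively. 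Combined with the constraint $J^2 = -1$ imposed on every element of $X$, this $\Aut(\BC)$-equivariance should rule out any proper $\mathfrak{h}_n$ containing the whole of $X$; rendering this last step precise — essentially showing that $X$ contains enough $\BQ$-generic elements to generate $\mathfrak{g}$ as an algebraic $\BQ$-Lie algebra — constitutes the bulk of the technical work.
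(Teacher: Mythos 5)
Your overall strategy is the same as the paper's: conjugate $J_0$ by a suitable element of $\mathcal{G}(\BR)$, using Baire category over the countable family of proper $\BQ$-rational Lie subalgebras of $\mathfrak{g}$, and then invoke the minimality characterization of $\mathrm{hdg}_T$ (Theorem \ref{minLie}) to conclude. But the step you yourself flag as ``the bulk of the technical work'' --- showing that no proper $\BQ$-rational subalgebra $\mathfrak{h}_n$ can swallow the relevant set of candidates for $J$ --- is left as a genuine gap, and your proposed route for closing it is misdirected. The paper closes it with Lemma \ref{idealW}: if $L$ is a \emph{proper} $\BQ$-subspace of $\mathfrak{g}$, then $L_{\BR}$ contains \emph{no nonzero ideal} of $\mathfrak{g}_{\BR}$. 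The proof is a short Galois descent: a nonzero ideal $\mathfrak{a}\subset L_{\BR}$ of $\mathfrak{g}_{\BR}$ yields the $\Aut(\BC)$-invariant nonzero ideal $\sum_{s\in\Aut(\BC)} s(\mathfrak{a}_{\BC})\subset L_{\BC}$ of $\mathfrak{g}_{\BC}$, which descends to a nonzero $\BQ$-rational ideal of $\mathfrak{g}$ contained in $L$, contradicting $\BQ$-simplicity. This is exactly the ``exploit the $\BQ$-rationality of $\mathfrak{h}_n$'' ingredient you gesture at, and once you have it your own orbit-span observation finishes the argument: the $\BR$-linear span of the $\mathrm{Ad}(\mathcal{G}(\BR)^0)$-orbit $\mathcal{O}$ of $J_0$ is a nonzero ideal of $\mathfrak{g}_{\BR}$, hence not contained in any $\mathfrak{h}_{n,\BR}$, so each $\mathcal{O}\cap\mathfrak{h}_{n,\BR}$ is a proper closed analytic subset of the connected manifold $\mathcal{O}$ and therefore nowhere dense; Baire then produces the desired $J\in\mathcal{O}$. (The paper instead cites \cite[Lemma 2, p.~494]{ZarhinSh} for the nowhere-density of $\{u\in\mathcal{G}(\BR)\mid J_0\in u^{-1}L_{\BR}u\}$, which amounts to the same thing.)

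Two smaller points. First, your suggestion that the constraint $J^2=-1$ on elements of $X$, together with ``$\BQ$-generic elements of $X$,'' is what should rule out a proper $\mathfrak{h}_n$ is not the right direction: the relation $J^2=-1$ plays no role in this step beyond guaranteeing $J_0\ne 0$ (it is needed only so that the conjugate $J=uJ_0u^{-1}$ again defines a complex structure), and working with the full real algebraic set $X$ rather than the single orbit $\mathcal{O}$ only complicates the Baire argument, since $X$ need not be a connected manifold on which proper analytic subsets are automatically nowhere dense. Second, you restrict your countable family to \emph{algebraic} proper $\BQ$-subalgebras; this is harmless (the paper's Theorem \ref{minLie} shows the minimal $\BQ$-subalgebra whose realification contains $J$ is automatically algebraic, so one may equally well run the argument over all proper $\BQ$-Lie subalgebras, as the paper does), but you should say which characterization of $\mathrm{hdg}_T$ you are invoking.
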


\begin{proof}
We will need the following auxiliary statement.
\begin{lem}
\label{idealW}
If $\mathfrak{v}$ is a proper nonzero $\BQ$-vector subspace of  $\mathfrak{g}$ then the real vector subspace
$$\mathfrak{v}_{\BR}=\mathfrak{v}\otimes_{\BQ}\BR\subset \mathfrak{g}\otimes_{\BQ}\BR=\mathfrak{g}_{\BR}$$
does not contain a nonzero ideal of $\mathfrak{g}_{\BR}$.
\end{lem}
Notice that  
the set of proper nonzero $\BQ$-Lie subalgebras $L$ of  $\mathfrak{g}$ is {\sl countable}. By Lemma \ref{idealW},
every $L_{\BR}=L\otimes_{\BQ}\BR$ does not contain a nonzero ideal of $\mathfrak{g}_{\BR}$. By \cite[Lemma 2 on p. 494]{ZarhinSh},
the closed subset
$$\mathcal{G}(L_{\BR},J_0)=\{u \in \mathcal{G}(\BR)\mid J_0\in u^{-1} L_{\BR} u\}$$
is nowhere dense in $\mathcal{G}(\BR)$. It follows that there exists $u \in  \mathcal{G}(\BR)$ such that
$J_0$ does not lie in any of $ u^{-1} L_{\BR} u$. Let us put
$$J:=u J_0 u^{-1} \in \mathfrak{g}_{\BR}\subset \End_{\BR}(\Pi_{\BR}).$$
Then $J$ does not lie in any of  $L_{\BR}$ and $J^2=-1 \in \End_{\BR}(\Pi_{\BR})$.
It follows that $\mathfrak{g}$ coincides with the smallest $\BQ$-Lie subalgebra $\mathfrak{u}$ of 
$\mathfrak{g}$  such that $\mathfrak{u}_{\BR}$ contains $J$. This implies that
$\mathfrak{g}$ coincides with the smallest $\BQ$-Lie subalgebra $\mathfrak{u}$ of 
$\End_{\BQ}(\Pi_{\BQ})$  such that $\mathfrak{u}_{\BR}$ contains $J$.  It follows readily that
$\mathfrak{g}$ coincides with the Lie algebra of the Hodge group of a complex torus $T=\Pi_{\BR}/\Lambda$
where the complex structure on the real vector space $\Pi_{\BR}$ is defined by $J$ and $\Lambda$ is any 
discrete  subgroup $\Lambda$ of rank $2g$ in $\Pi_{\BQ}$.
\end{proof}

\begin{proof}[Proof of Lemma \ref{idealW}]
Suppose that $L_{\BR}$ contains a nonzero ideal $\mathfrak{a}$ of $\mathfrak{g}_{\BR}$.
Then the $\BC$-vector subspace
$$L_{\BC}=L_{\BR}\otimes_{\BR}\BC=L\otimes_{\BQ}\BC$$
contains a nonzero ideal $\mathfrak{a}_{\BC}=\mathfrak{a}\otimes_{\BR}\BC$ of the complex Lie algebra
$$\mathfrak{g}_{\BC}=\mathfrak{g}_{\BR}\otimes_{\BR}\BC=\mathfrak{g}\otimes_{\BQ}\BC.$$
Then 
$$\tilde{\frak{a}}=\sum_{s \in \Aut(\BC)} s(\frak{a})$$
is a $\Aut(\BC)$-invariant ideal of $\mathfrak{g}_{\BC}$ that lies in $L_{\BC}$. Hence, there is a $\BQ$-vector subspace $\mathfrak{a}_{\BQ}$ such that
$$\tilde{\frak{a}}=\mathfrak{a}_{\BQ}\otimes_{\BQ}\BC;$$
in addition, $\mathfrak{a}_{\BQ}$ is an ideal of $\mathfrak{g}$, which contradicts the simplicity of the $\BQ$-Lie algebra $\mathfrak{g}$.
This ends the proof.
\end{proof}

\begin{ex}
\label{classicalHe}
We keep the notation of Theorem \ref{existH}.
Let $g \ge 3$ be an integer, $\Pi_{\BQ}$ a $2g$-dimensional vector space over $\BQ$.  Let
 $\mathcal{G}$ be a  $\BQ$-simple algebraic subgroup of $\GL(\Pi_{\BQ})$
that enjoys one of the following properties.

\begin{itemize}
\item[(i)]
 $\mathcal{G}=\SL(\Pi_{\BQ})$.
\item[(ii)] There exists a nondegenerate quadratic form 
$$\phi: \Pi_{\BQ} \to \BQ$$
of even signature $(2p,2q)$ with $p+q=g$
such that $\mathcal{G}$ coincides with the corresponding special orthogonal group
$\mathrm{SO}(\Pi_{\BQ},\phi)$.
\end{itemize}
In both cases there exists $J_0 \in \mathfrak{g}_{\BR}$ with $J_0^2=-1$.
(Here $\mathfrak{g}\subset \End_{\BQ}(\Pi_{\BQ})$ is the Lie algebra of $\mathcal{g}$.)
In light of Theorem \ref{existH}, there exists a complex structure on the real vector space $\Pi_{\BR}$ such that
the Hodge group of the corresponding complex torus $T=\Pi_{\BR}/\Lambda$ coincides with $\mathcal{G}$.
In light of Example \ref{classicalH}, $T$ is $2$-simple.
\end{ex}

\section{The degree $g$ case}
\label{DegreeG}

In this section we discuss $g$-dimensional $2$-simple tori, whose endomorphism algebra is a number field of degree $g$.

\begin{thm}
\label{degreeg}
Let $T$ be a $2$-simple torus of dimension $g>2$. If $\End^0(T)$ is a number field $E$ of degree $g$
then
$$\mathrm{Hdg}(T)=\mathrm{Res}_{E/\BQ}\SL(\left(\mathrm{H}_1(T,\BQ)/E\right).$$
\end{thm}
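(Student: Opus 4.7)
The plan is to leverage Proposition \ref{simpleCom} as a black box: under our hypotheses it yields $\mathrm{hdg}_T\subset\sll(H/E)$ (where $H:=\mathrm{H}_1(T,\BQ)$ is viewed as a $2$-dimensional $E$-vector space) and the semisimplicity of $\mathrm{hdg}_T$. Complexifying, $\mathrm{hdg}_{T,\BC}$ sits as a semisimple Lie subalgebra of
\[
\sll(H/E)\otimes_{\BQ}\BC \;=\; \bigoplus_{\sigma\in\Sigma_E}\sll(V_\sigma),\qquad V_\sigma:=H\otimes_{E,\sigma}\BC,
\]
each $V_\sigma$ being two-dimensional over $\BC$. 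My goal is to upgrade this inclusion to an equality; descending back to $\BQ$ and then to the corresponding connected algebraic subgroup of $\GL(\Lambda_{\BQ})$ will yield the desired description of $\Hdg(T)$.

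First I would show that for every $\sigma$ the projection $\pi_\sigma(\mathrm{hdg}_{T,\BC})\subset\sll(V_\sigma)$ is surjective. Being the image of a semisimple Lie algebra, it is a semisimple subalgebra of $\sll_2(\BC)$, hence either $0$ or all of $\sll_2$. Because $\mathrm{hdg}_T$ is defined over $\BQ$, the subset of $\sigma\in\Sigma_E$ where the projection is nonzero is stable under $\Aut(\BC)$ and therefore, by transitivity of $\Aut(\BC)$ on $\Sigma_E$, is either empty or all of $\Sigma_E$. The empty case is impossible, because $\Hdg(T)$ is positive-dimensional. Consequently, decomposing $\mathrm{hdg}_{T,\BC}=\bigoplus_i\mathfrak{g}_i$ into simple ideals, for each $\sigma$ exactly one index $i(\sigma)$ yields a nonzero restriction $\pi_\sigma\colon\mathfrak{g}_{i(\sigma)}\xrightarrow{\;\sim\;}\sll(V_\sigma)\cong\sll_2$ (the others project to zero, since any nonzero homomorphism between simple Lie algebras is an isomorphism). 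This produces a partition $\Sigma_E=\bigsqcup_iS_i$ with $S_i=\{\sigma:i(\sigma)=i\}$, and each $\mathfrak{g}_i\cong\sll_2$ is diagonally embedded into $\bigoplus_{\sigma\in S_i}\sll(V_\sigma)$ via chosen isomorphisms.

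The key step, which I expect to do the real work, is the centralizer computation via Schur's lemma. On the block $V_{(i)}:=\bigoplus_{\sigma\in S_i}V_\sigma$ the ideal $\mathfrak{g}_i$ acts as the standard two-dimensional $\sll_2$-representation with multiplicity $|S_i|$, while every other $\mathfrak{g}_j$ acts trivially; intertwiners between different blocks necessarily vanish, because they would have to intertwine a standard $\sll_2$-representation with a trivial one. Hence
\[
\End_{\mathrm{hdg}_{T,\BC}}(V_\BC)\;\cong\;\bigoplus_i\mathrm{Mat}_{|S_i|}(\BC).
\]
By property (ii) of the Hodge group (Section \ref{HDG}), this centralizer coincides with $\End^0(T)\otimes_\BQ\BC=E_{\BC}\cong\BC^g$, a \emph{commutative} algebra of dimension $g$. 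Commutativity forces every $|S_i|=1$, so the partition is trivial and $\mathrm{hdg}_{T,\BC}=\bigoplus_\sigma\sll(V_\sigma)=\sll(H/E)\otimes_\BQ\BC$. Descending to $\BQ$ gives $\mathrm{hdg}_T=\sll(H/E)$, and since both $\Hdg(T)$ and $\mathrm{Res}_{E/\BQ}\SL(H/E)$ are connected algebraic subgroups of $\GL(\Lambda_{\BQ})$ with the same Lie algebra, they coincide. The real subtlety is thus packaged in Proposition \ref{simpleCom} (where $2$-simplicity is used to pin down $\sll(H/E)$ as the ambient Lie algebra and to rule out a central torus); once that is in hand, the centralizer argument is the decisive ingredient.
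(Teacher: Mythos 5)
Your proof is correct, and it follows essentially the same route as the paper: reduce to the Lie-algebra identity $\mathrm{hdg}_T=\sll\left(\mathrm{H}_1(T,\BQ)/E\right)$ using the inclusion and semisimplicity established in Proposition \ref{simpleCom} (where the $2$-simplicity is consumed), then extend scalars, decompose along $\Sigma_E$ into $2$-dimensional pieces, and exploit the fact that the centralizer of $\mathrm{hdg}_{T,\BC}$ is the \emph{commutative} algebra $E\otimes_{\BQ}\BC$. The paper packages this last step as a field-theoretic statement, Lemma \ref{keyg} (stated over an arbitrary field $k$ of characteristic $0$, so that it can be reused in Theorem \ref{existenceg}), and executes it slightly differently: surjectivity of each projection $\pi_\sigma$ is deduced from $\End_{\bar{\mathfrak{g}}}(\bar{W}_\sigma)=\bar{k}$ rather than from $\Aut(\BC)$-stability and transitivity on $\Sigma_E$ as you do; and the fact that distinct factors do not get identified is proved by examining pairwise projections $\bar{\mathfrak{g}}_{\sigma,\tau}$ and invoking a Goursat-type lemma from Ribet, whereas you decompose $\mathrm{hdg}_{T,\BC}$ into simple ideals and compute the full centralizer as $\bigoplus_i\Mat_{|S_i|}(\BC)$ directly. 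Both variants hinge on exactly the same input (commutativity of the centralizer), so the difference is organizational; your version is arguably a bit more self-contained, while the paper's is formulated so as to serve double duty later.
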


\begin{proof}
It suffices to check that 
\begin{equation}
\label{hdgSL}
\mathrm{hdg}_{T} = \sll\left(\mathrm{H}_1(T,\BQ)/E\right).
\end{equation}
In light of \eqref{slE},  the desired equality \eqref{hdgSL}
 is an immediate corollary of  the following  observation
 applied to 
 $$k=\BQ, \ K=E, W=\mathrm{H}_1(T,\BQ), \ \mathfrak{g}=\mathrm{hdg}_T.$$

\begin{lem}
\label{keyg}
Let $g$ be a positive integer, $W$ a $2g$-dimensional vector space over a field $k$ of characteristic $0$,  $\mathfrak{g} \subset \End_k(W)$ a linear  semisimple $k$-subalgebra such that the centralizer
$$K:=\End_{\mathfrak{g}}W\subset \End_k(W)$$
is an overfield of $K$ with $[K:k]=g$.  
Then $\mathfrak{g}$ coincides with the Lie algebra $\sll(W/K)$ of traceless $K$-linear operators in $W$. 
\end{lem}

\begin{proof}[Proof of Lemma \ref{keyg}]
The semisimplicity of $\mathfrak{g}$ implies that
\begin{equation}
\label{slEE}
\mathfrak{g}\subset \sll(W/K).
\end{equation}
In what follows we mimick the arguments of \cite[pp. 790--791, Proof of Th. 4.4.10]{Ribet} where $\ell$-adic Lie algebras are treated.

Let $\bar{k}$ be an algebraic closure of $k$, and $\Sigma_K$ the $g$-element set of field embeddings $\sigma: K \hookrightarrow \bar{k}$
that coincide with the identity map on $k$.
Let us consider the $2g$-dimensional $\bar{k}$-vector space $\bar{W}=W\otimes_k\bar{k}$ and the  $\bar{k}$-Lie algebra
\begin{equation}
\label{overBar}
\bar{\mathfrak{g}}=\mathfrak{g}\otimes_k \bar{k} \subset \End_k(W)\otimes_k \bar{k} =\End_{\bar{k}}(\bar{W}).
\end{equation}
The semisimplicity of the $k$-Lie algebra $\mathfrak{g}$ implies the {\sl semisimplicity} of 
the $\bar{k}$-Lie algebra $\bar{\mathfrak{g}}$.

Clearly, the {\sl centralizer}  $\End_{\bar{\mathfrak{g}}}(\bar{W})$
of  $\bar{\mathfrak{g}}$ in $\End_{\bar{k}}(\bar{W})$ equals
\begin{equation}
\label{central}
\End_{\mathfrak{g}}(W)\otimes_k\bar{k}=K\otimes_k\bar{k}
\end{equation}
and  $\bar{W}$ is a free $K\otimes_k\bar{k}$-module of rank $2$, because $W$ is a vector space over $K$ of dimension $2$.  We have
\begin{equation}
\label{EkK}
K\otimes_k\bar{k}=\oplus_{\sigma\in \Sigma_K}K\otimes_{K,\sigma}\bar{k}=\oplus_{\sigma\in \Sigma_K} \bar{k}_{\sigma}
\end{equation}
where
$$\bar{k}_{\sigma}=K\otimes_{K,\sigma}\bar{k}=\bar{k}.$$
We have
$$\bar{W}=\oplus_{\sigma\in \Sigma_K} \bar{W}_{\sigma} \ \text{ where } \bar{W}_{\sigma}=\bar{k}_{\sigma} \bar{W}\subset \bar{W}.$$
The freeness of the $K\otimes_k\bar{k}$-module $\bar{W}$ with rank $2$ implies that each $\bar{W}_{\sigma}$ is a $\bar{k}_{\sigma}$-vector space of dimension $2$.
Since 
$$\bar{k}_{\sigma}\subset K\otimes_k \bar{k}=\End_{\bar{\mathfrak{g}}}(\bar{W}),$$
each $\bar{W}_{\sigma}=\bar{k}_{\sigma} \bar{W}$ is a $\bar{\mathfrak{g}}$-invariant subspace of $\bar{W}$ and
the centralizer of $\bar{\mathfrak{g}}$ 
\begin{equation}
\label{centSigma}
\End_{\bar{\mathfrak{g}}}(\bar{W}_{\sigma})=\bar{k}_{\sigma}=\bar{k}.
\end{equation}

 Let 
 $$\bar{\mathfrak{g}}_{\sigma}\subset \End_{\bar{k}_{\sigma}}(\bar{W}_{\sigma})= \End_{\bar{k}}(\bar{W}_{\sigma})$$
 be the image of the natural $\bar{k}$-Lie algebra homomorphism 
 $$\bar{\mathfrak{g}} \to  \End_{\bar{k}}(\bar{W}_{\sigma}).$$
 The semisimplicity of $\bar{\mathfrak{g}}$ implies the semisimplicity of the Lie algebra $\bar{\mathfrak{g}}_{\sigma}$,
 because the latter is isomorphic to a quotient of the former. This implies that
 $$\bar{\mathfrak{g}}_{\sigma}\subset \sll(\bar{W}_{\sigma}) \cong \sll(2, \bar{k}_{\sigma})=\sll(2, \bar{k}).$$
 Taking into account \eqref{centSigma} and the semisimplicity of $\bar{\mathfrak{g}}_{\sigma}$, we conclude that
 \begin{equation}
 \label{sl2Sigma}
 \bar{\mathfrak{g}}_{\sigma}=\sll(\bar{W}_{\sigma})\cong  \sll(2, \bar{k}).
 \end{equation}
 This implies that
 \begin{equation}
 \label{directSigma}
 \bar{\mathfrak{g}}\subset \oplus_{\sigma\in \Sigma_K}\bar{\mathfrak{g}}_{\sigma}
 = \oplus_{\sigma\in \Sigma_K}\sll(\bar{W}_{\sigma})\subset
 \oplus_{\sigma\in \Sigma_K}\End_{\bar{k}}(\bar{W}_{\sigma}).
 \end{equation}
 Let $\sigma$ and $\tau$ be {\sl distinct} elements of $\Sigma_K$. Clearly, $\bar{W}_{\sigma}\oplus \bar{W}_{\tau}$ is a  $\bar{\mathfrak{g}}$-invariant
 subspace of $\bar{W}$.
 Let $\bar{\mathfrak{g}}_{\sigma,\tau}$ be the image
 of $\bar{\mathfrak{g}}$ in $\End_{\bar{k}}(\bar{W}_{\sigma}\oplus \bar{W}_{\tau})$. Since $\bar{\mathfrak{g}}_{\sigma,\tau}$ is isomorphic to a
 quotient of $\bar{\mathfrak{g}}$, it
  is a semisimple $\bar{k}$-Lie algebra such that
 $$\bar{\mathfrak{g}}_{\sigma,\tau}\subset \sll(\bar{W}_{\sigma})\oplus \sll(\bar{W}_{\tau})\subset 
 \End_{\bar{k}}(\bar{W}_{\sigma})\oplus \End_{\bar{k}}(\bar{W}_{\tau}) \subset \End_{\bar{k}}(\bar{W}_{\sigma}\oplus \bar{W}_{\tau}).$$
 Notice that $\bar{\mathfrak{g}}_{\sigma,\tau}$ projects {\sl surjectively} on both 
 $$\bar{\mathfrak{g}}_{\sigma}=\sll(\bar{W}_{\sigma}) \ \text{ and } \bar{\mathfrak{g}}_{\tau}=\sll(\bar{W}_{\tau}),$$
 because $\bar{\mathfrak{g}}$ does.
 The simplicity of both pairwise isomorphic Lie algebras $\sll(\bar{W}_{\sigma})$ and $\sll(\bar{W}_{\tau})$ and the semisimplicity
 of   $\bar{\mathfrak{g}}_{\sigma,\tau}$ implies that either
 \begin{equation}
 \label{gSigmaTau}
 \bar{\mathfrak{g}}_{\sigma,\tau}=\sll(\bar{W}_{\sigma})\oplus \sll(\bar{W}_{\tau})
 \end{equation}
 or
 $$\bar{\mathfrak{g}}_{\sigma,\tau}\cong \sll(\bar{W}_{\sigma})\cong  \sll(\bar{W}_{\tau})\cong \sll(2,\bar{k}).$$
 In the latter case the $\bar{\mathfrak{g}}_{\sigma,\tau}$-modules $\bar{W}_{\sigma}$ and $\bar{W}_{\tau}$ are isomorphic, because the Lie
 algebra $\sll(2,\bar{k})$ has precisely one nontrivial $2$-dimensional representation  over $\bar{k}$, up to an isomorphism. This implies that the
 $\bar{\mathfrak{g}}$-modules  $\bar{W}_{\sigma}$ and $\bar{W}_{\tau}$ are isomorphic  as well and therefore the centralizer
 $\End_{\bar{\mathfrak{g}}}(\bar{W})$ is noncommutative, which is not the case. The obtained contradiction proves that the equality \eqref{gSigmaTau}
 holds for any $\sigma,\tau$.  Now, it follows from Lemma on p. 790-791 of \cite{Ribet} that
 $$\bar{\mathfrak{g}}= \oplus_{\sigma\in \Sigma_K}\bar{\mathfrak{g}}_{\sigma}
 = \oplus_{\sigma\in \Sigma_K}\sll(\bar{W}_{\sigma}).$$
 This implies that
 $$\dim_{\bar{k}}(\bar{\mathfrak{g}})=3g=\dim_k\sll(W/K).$$
 By \eqref{slEE},  $\mathfrak{g}\subset \sll(W/K)$. Taking into account that
 $\dim_k(\mathfrak{g})=\dim_{\bar{k}}(\bar{\mathfrak{g}})$, we conclude that
 $\dim_k(\mathfrak{g})=\dim_k \sll(W/K).$  This implies that
 $\mathfrak{g}= \sll(W/K)$, which ends the proof.

\end{proof}

\end{proof}

\begin{thm}
\label{exteriorH}
Let $T$ a  simple complex torus of dimension $g>2$. Suppose that $\End^0(T)$ is a number field $E$ of degree $g$
and
$$\mathrm{Hdg}(T)=\mathrm{Res}_{E/\BQ}\SL(\left(\mathrm{H}_1(T,\BQ)/E\right).$$
Then the following conditions are equivalent.

\begin{itemize}
\item[(i)]
$T$ is $2$-simple.

\item[(ii)]
$E$ is almost doubly transitive.

\end{itemize}

\end{thm}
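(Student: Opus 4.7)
The plan is to translate 2-simplicity into a purely representation-theoretic question about the $\Hdg(T)$-module $\wedge^2_{\BQ}\Lambda_{\BQ}$ and then detect the answer via the Galois action on $\Sigma_E$. Since $T$ is simple of dimension $g>2$, Remark \ref{translation}(ii) says that $T$ is 2-simple if and only if $\HH^2(T,\BQ)$ splits as the direct sum of its Hodge invariants $\HH^{1,1}(T,\BQ)=\HH^2(T,\BQ)^{\Hdg(T)}$ and a single irreducible rational Hodge substructure. Since rational Hodge substructures are exactly $\Hdg(T)$-invariant $\BQ$-subspaces and $\HH^2(T,\BQ)=\Hom_{\BQ}(\wedge^2_{\BQ}\Lambda_{\BQ},\BQ)$, by duality this is equivalent to requiring that $\wedge^2_{\BQ}\Lambda_{\BQ}$ splits as a direct sum of its $\Hdg(T)$-invariants and a single irreducible $\Hdg(T)$-submodule.

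Next I would extend scalars to $\bar{\BQ}$. Since $\Lambda_{\BQ}$ is free of rank two over $E$, we obtain a decomposition $\Lambda_{\bar{\BQ}}:=\Lambda_{\BQ}\otimes_{\BQ}\bar{\BQ}=\bigoplus_{\sigma\in\Sigma_E}\Lambda_{\sigma}$, where each $\Lambda_{\sigma}$ is a two-dimensional $\bar{\BQ}$-vector space on which $E$ acts through $\sigma$. The hypothesis on the Hodge group yields $\Hdg(T)_{\bar{\BQ}}=\prod_{\sigma\in\Sigma_E}\SL(\Lambda_{\sigma})$ acting factor-wise, and the standard exterior-square formula gives
\[
\wedge^2_{\bar{\BQ}}\Lambda_{\bar{\BQ}}=\Bigl(\bigoplus_{\sigma\in\Sigma_E}\wedge^2_{\bar{\BQ}}\Lambda_{\sigma}\Bigr)\oplus\Bigl(\bigoplus_{\{\sigma,\tau\}\subset\Sigma_E}\Lambda_{\sigma}\otimes_{\bar{\BQ}}\Lambda_{\tau}\Bigr),
\]
where the first parenthesis is a $g$-dimensional space of $\Hdg(T)_{\bar{\BQ}}$-invariants (each $\SL(\Lambda_{\sigma})$ acts trivially on its own exterior square), and each summand $\Lambda_{\sigma}\otimes_{\bar{\BQ}}\Lambda_{\tau}$ (indexed by unordered pairs of distinct embeddings) is an irreducible four-dimensional $\Hdg(T)_{\bar{\BQ}}$-module. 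These non-trivial isotypic components are pairwise non-isomorphic, because the pair of factors of $\prod_{\sigma}\SL(\Lambda_{\sigma})$ that acts non-trivially differs for different pairs $\{\sigma,\tau\}$.

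To conclude, I would apply Galois descent. Complete reducibility holds since $\Hdg(T)$ is reductive in characteristic zero, and $\BQ$-rational $\Hdg(T)$-submodules of $\wedge^2_{\BQ}\Lambda_{\BQ}$ correspond to $\Gal(\BQ)$-stable unions of $\Hdg(T)_{\bar{\BQ}}$-isotypic components. The trivial isotypic component is automatically $\Gal(\BQ)$-stable and descends to a $g$-dimensional $\BQ$-subspace sitting inside the invariants; the complementary sum $\bigoplus_{\{\sigma,\tau\}}\Lambda_{\sigma}\otimes_{\bar{\BQ}}\Lambda_{\tau}$ descends to an irreducible $\BQ$-rational $\Hdg(T)$-module if and only if $\Gal(\BQ)$ permutes the two-element subsets of $\Sigma_E$ transitively, i.e.\ if and only if $E$ is almost doubly transitive. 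Combining this with the first paragraph gives the claimed equivalence. The main delicate point is the multiplicity-one shape of the isotypic decomposition (which is what makes the descent unambiguous); once one notices that distinct $\Lambda_{\sigma}\otimes\Lambda_{\tau}$ are non-isomorphic, using only the product structure of $\Hdg(T)_{\bar{\BQ}}$, the remainder is essentially bookkeeping.
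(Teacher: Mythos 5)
Your proposal is correct and follows essentially the same route as the paper: the paper reduces the theorem (via Remark \ref{translation}(ii)) to its Lemma \ref{keyG}, whose proof uses exactly your decomposition $\wedge^2_{\bar{\BQ}}\Lambda_{\bar{\BQ}}=\bigl(\oplus_{\sigma}\wedge^2\Lambda_{\sigma}\bigr)\oplus\bigl(\oplus_{\{\sigma,\tau\}}\Lambda_{\sigma}\otimes\Lambda_{\tau}\bigr)$, the pairwise non-isomorphy of the nontrivial summands, and Galois descent to match irreducibility of the nontrivial part over $\BQ$ with transitivity of $\Gal(\BQ)$ on two-element subsets of $\Sigma_E$. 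The only cosmetic difference is that the paper phrases the argument at the level of the Lie algebra $\sll(W/K)$ over a general field $k$ of characteristic zero, while you argue directly with the group over $\BQ$.
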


It follows from Remark \ref{translation}(ii) 
that Theorem \ref{exteriorH} is an immediate corollary of the following observation
 applied to 
$$k=\BQ, \  K=E, \ W=\mathrm{H}_1(T,\BQ).$$

\begin{lem}
\label{keyG}
Let $g$ be a positive integer $\ge 2$,  $W$ a $2g$-dimensional vector space over a field $k$ of characteristic $0$, and
 $\mathfrak{g} \subset \End_k(W)$ a linear  semisimple $k$-subalgebra such that the centralizer of $\mathfrak{g}$
$$K:=\End_{\mathfrak{g}}W\subset \End_k(W)$$
is an overfield of $k$ withbeta:= $[K:k]=g$, 
and $\mathfrak{g}=\sll(W/K)$  is the Lie algebra of traceless $K$-linear operators in $W$. 

Then:

\begin{itemize}
\item[(a1)]
The subspace 
$ \left(\wedge^2_k W\right)^{ \mathfrak{g}}$ of $\mathfrak{g}$-invariants 
does {\sl not} coincide with the whole  space $\wedge^2_k W$.
\item[(a2)]
The subspace 
$\Hom(\wedge^2_k W,k)^{ \mathfrak{g}}$ of $\mathfrak{g}$-invariants 
does {\sl not} coincide with the whole  space $\Hom(\wedge^2_k W,k)$.
\item[(b)]
 The following conditions are equivalent.
\begin{itemize}
\item[(b1)]
The $\mathfrak{g}$-module $\wedge^2_k W$ is a direct sum of its submodule 
$\left(\wedge^2_k W\right)^{ \mathfrak{g}}$ of $\mathfrak{g}$-invariants
and a simple $\mathfrak{g}$-module.
\item[(b2]
The $\mathfrak{g}$-module $\Hom(\wedge^2_k W,k)$ is a direct sum of its submodule 
$\Hom(\wedge^2_k W,k)^{ \mathfrak{g}}$ of $\mathfrak{g}$-invariants
and a simple $\mathfrak{g}$-module.  
\item[(b3)]
Let $\mathrm{Gal}(k)=\Aut(\bar{k}/k)$ be the absolute Galois group of $k$.  Let $\Sigma_K$ be the set
of $k$-linear field embeddings $K \hookrightarrow \bar{k}$.
Then the natural action
of $\mathrm{Gal}(k)$ on $\Sigma_K$ is almost doubly transitive.
\end{itemize}
\end{itemize}
\end{lem}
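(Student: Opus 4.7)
The plan is to pass to the algebraic closure $\bar k$ of $k$ and re-use the splitting established in the proof of Lemma \ref{keyg}: we have $\bar W = W\otimes_k\bar k = \bigoplus_{\sigma\in\Sigma_K}\bar W_\sigma$, with each $\bar W_\sigma$ a two-dimensional $\bar k$-vector space, and $\bar{\mathfrak{g}} := \mathfrak{g}\otimes_k\bar k = \bigoplus_{\sigma\in\Sigma_K}\sll(\bar W_\sigma)$, where the factor $\sll(\bar W_\sigma)$ acts via its tautological representation on $\bar W_\sigma$ and trivially on $\bar W_\tau$ for $\tau\ne\sigma$.

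First I would decompose $\wedge^2_{\bar k}\bar W$ as a $\bar{\mathfrak{g}}$-module:
$$\wedge^2_{\bar k}\bar W \;=\; \Bigl(\bigoplus_{\sigma\in\Sigma_K}\wedge^2_{\bar k}\bar W_\sigma\Bigr)\;\oplus\;\Bigl(\bigoplus_{\{\sigma,\tau\}} \bar W_\sigma\otimes_{\bar k}\bar W_\tau\Bigr),$$
where the second sum runs over the two-element subsets of $\Sigma_K$. Each $\wedge^2_{\bar k}\bar W_\sigma$ is a one-dimensional $\bar{\mathfrak{g}}$-trivial summand ($\sll(\bar W_\sigma)$ acts on its determinant line by the trace, which vanishes, and the other factors act trivially on $\bar W_\sigma$). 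For each pair $\{\sigma,\tau\}$, the summand $\bar W_\sigma\otimes_{\bar k}\bar W_\tau$ is a simple four-dimensional $\bar{\mathfrak{g}}$-module on which only the two factors $\sll(\bar W_\sigma)$ and $\sll(\bar W_\tau)$ act nontrivially, via the tensor product of their tautological representations. Summands attached to distinct pairs are pairwise non-isomorphic, being distinguished by the set of factors of $\bar{\mathfrak{g}}$ acting nontrivially. This already proves (a1): the second bracket has positive $\bar k$-dimension $2g(g-1)$ since $g\ge 2$, so the invariants form a proper subspace. Statement (a2) is equivalent to (a1) since, by semisimplicity, $\dim_k\Hom(V,k)^{\mathfrak{g}}=\dim_k V^{\mathfrak{g}}$ for any finite-dimensional $\mathfrak{g}$-module $V$; and the same observation yields (b1)$\Leftrightarrow$(b2), because a $\mathfrak{g}$-stable complement $M$ of $(\wedge^2_k W)^{\mathfrak{g}}$ is simple iff its dual $\Hom(M,k)$ is simple, and $\Hom(M,k)$ is precisely a $\mathfrak{g}$-stable complement of $\Hom(\wedge^2_k W,k)^{\mathfrak{g}}$ in $\Hom(\wedge^2_k W,k)$.

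It remains to establish (b1)$\Leftrightarrow$(b3). Let $M\subset\wedge^2_k W$ be a $\mathfrak{g}$-stable complement of $(\wedge^2_k W)^{\mathfrak{g}}$, so that (b1) asserts simplicity of $M$. Its base change $\bar M := M\otimes_k\bar k$ is the second bracket above, a direct sum of pairwise non-isomorphic simple $\bar{\mathfrak{g}}$-modules indexed by the two-element subsets of $\Sigma_K$. Since $\Gal(k)$ acts on $\bar W$ through the scalars $\bar k$, it permutes the $\bar W_\sigma$'s by $s(\bar W_\sigma)=\bar W_{s\circ\sigma}$, and hence permutes the summands $\bar W_\sigma\otimes\bar W_\tau$ according to the natural action of $\Gal(k)$ on the set of two-element subsets of $\Sigma_K$. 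By Galois descent, $\mathfrak{g}$-submodules of $M$ correspond bijectively to $\Gal(k)$-stable $\bar{\mathfrak{g}}$-submodules of $\bar M$; because the simple summands of $\bar M$ are pairwise non-isomorphic, such submodules are precisely the direct sums of $\Gal(k)$-stable subsets of those summands. Therefore $M$ is simple over $\mathfrak{g}$ if and only if $\Gal(k)$ acts transitively on the set of two-element subsets of $\Sigma_K$, which is exactly condition (b3).

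The main obstacle is this final Galois-descent step: one must verify that every $\bar{\mathfrak{g}}$-submodule of $\bar M$ is a sum of full simple summands (which uses the multiplicity-one property that those summands are pairwise non-isomorphic $\bar{\mathfrak{g}}$-modules), and that $\Gal(k)$-stable $\bar{\mathfrak{g}}$-submodules of $\bar M$ are exactly the base changes of $\mathfrak{g}$-submodules of $M$. Both steps are standard but carry the logical weight of the argument.
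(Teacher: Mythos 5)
Your proposal is correct and follows essentially the same route as the paper: decompose $\wedge^2_{\bar k}\bar W$ over $\bar k$ into the trivial part $\oplus_\sigma\wedge^2\bar W_\sigma$ plus the pairwise non-isomorphic simple summands indexed by two-element subsets of $\Sigma_K$, then use Galois descent and the multiplicity-one property to reduce simplicity of the complement to transitivity of $\Gal(k)$ on those subsets. The only (cosmetic) difference is that the paper reaches this decomposition by splitting the tensor square $\bar W^{\otimes 2}$ under the swap involution, whereas you invoke the standard decomposition of the exterior square of a direct sum directly.
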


\begin{rem}
\label{equiv}
The equivalences of properties  (a1) and (a2) and the equivalence of of conditions (b1) and (b2)  follow readily from the semisimplicity of  $\mathfrak{g}$.

\end{rem}

\begin{cor}
\label{g3}
Let $T$ be a  simple complex torus of dimension $3$. Suppose that $\End^0(T)$ is a cubic number field $E$ 
and
$$\mathrm{Hdg}(T)=\mathrm{Res}_{E/\BQ}\SL(\left(\mathrm{H}_1(T,\BQ)/E\right).$$
Then 
$T$ is $2$-simple.
\end{cor}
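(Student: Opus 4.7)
The plan is to deduce this as a direct corollary of Theorem \ref{exteriorH} applied to $g=3$. Under the given hypotheses, all assumptions of Theorem \ref{exteriorH} are in force, so it suffices to verify that the cubic number field $E=\End^0(T)$ is almost doubly transitive in the sense of the definition preceding Remark \ref{EisoF}.

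First, I would recall that the set $\Sigma_E$ of field embeddings $E \hookrightarrow \BC$ carries a canonical transitive action of $\Gal(\BQ)$ (via \eqref{GQEbar}); this transitivity holds for any number field. Since $[E:\BQ]=3$, the set $\Sigma_E$ consists of exactly three elements. At this point I would invoke Remark \ref{almostT}(3), which states that every transitive action on a three-element set is automatically almost doubly transitive. Applied to the $\Gal(\BQ)$-action on $\Sigma_E$, this shows that $E$ is an almost doubly transitive number field.

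Having established almost double transitivity of $E$, Theorem \ref{exteriorH} then gives the equivalence of (i) $T$ is $2$-simple and (ii) $E$ is almost doubly transitive. Since (ii) holds, we conclude that $T$ is $2$-simple, as required.

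The argument is essentially a one-line observation: for $g=3$ the set $\Sigma_E$ is too small to have any room for failure of almost double transitivity once transitivity (which is automatic) is present. There is no real obstacle here; the only thing to double-check is that the hypotheses of Theorem \ref{exteriorH} --- namely that $T$ is simple of dimension $>2$, that $\End^0(T)$ is a number field of degree $g$, and that $\mathrm{Hdg}(T)=\mathrm{Res}_{E/\BQ}\SL(\mathrm{H}_1(T,\BQ)/E)$ --- are exactly the hypotheses imposed in the statement of Corollary \ref{g3}.
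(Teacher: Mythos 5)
Your proposal is correct and follows exactly the paper's own argument: the paper's proof of Corollary \ref{g3} likewise cites Theorem \ref{exteriorH} together with the observation that every transitive action on the three-element set $\Sigma_E$ is almost doubly transitive. Nothing is missing.
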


\begin{proof}[Proof of Corollary \ref{g3}]
The result follows readily from Theorem \ref{exteriorH},
since every transitive action on the $3$-element set $\Sigma_E$ is almost doubly transitive.
\end{proof}

\begin{proof}[Proof of Lemma \ref{keyG}]
We use the notation of the Proof of Lemma \ref{keyg}.   In particular,
$$\bar{\mathfrak{g}}_{\sigma}=\sll(\bar{W}_{\sigma}), \ \bar{\mathfrak{g}}_{\sigma,\tau}=\sll(\bar{W}_{\sigma})\oplus \sll(\bar{W}_{\tau}) \
\forall \sigma, \tau \in \Sigma_K, \sigma \ne \tau;$$
$$\bar{W}=\oplus_{\sigma\in \Sigma_K}\bar{W}_{\sigma}, \ \bar{\mathfrak{g}}=\oplus_{\sigma\in \Sigma_K}\bar{\mathfrak{g}}_{\sigma}, \ .$$

Let us start with the $\mathfrak{g}$-module 
$$W^{\otimes 2}:=W\otimes_k W \to W.$$
There is an involution
$$\delta: W^{\otimes 2} \to W^{\otimes 2},  \ u\otimes v \mapsto v \otimes u,$$
whose subspace of invariants is the symmetric square $\mathbf{S}_k^2W$  of $W$ and the subspace of anti-invariants is the exterior square $\wedge_k^2W$.
Clearly, $\delta$ commutes with the action of $\mathfrak{g}$; in particular, both $\mathbf{S}_k^2W$ and  $\wedge_k^2W$ are $\mathfrak{g}$-invariant subspaces of the tensor square of $W$.

Let us consider 
the $\bar{\mathfrak{g}}$-module
$$\bar{W}^{\otimes 2}:=\bar{W}\otimes_{\bar{k}}\bar{W}.$$ 
Extending by $\bar{k}$-linearity  the involution $\delta$, we get the involution
$$\bar{\delta}: \bar{W}^{\otimes 2} \to \bar{W}^{\otimes 2},  \ u\otimes v \mapsto v \otimes u,$$
whose subspace of invariants is the symmetric square $\mathbf{S}_{\bar{k}}^2\bar{W}$  of $\bar{W}$ and 
the subspace of anti-invariants is the exterior square $\wedge_{\bar{k}}^2\bar{W}$.
Clearly, $\bar{\delta}$ commutes with the action of $\overline{\mathfrak{g}}$; in particular, both $\mathbf{S}_{\bar{k}}^2\bar{W}$  and  
 $\wedge_{\bar{k}}^2\bar{W}$  are $\overline{\mathfrak{g}}$-invariant subspaces of $\bar{W}^{\otimes 2}$.
 
 Let us choose an {\sl order} on $\Sigma_K$. Let $\Sigma_{K,2}$ be the set of all two-element subsets $B$ of $\Sigma_K$
 with 
 \begin{equation}
 \label{Bst}
 B=\{\sigma, \tau\};  \ \sigma,\tau \in \Sigma_K; \ \sigma<\tau.
 \end{equation}

Let us consider the following decomposition of
the $\bar{\mathfrak{g}}$-module  $\bar{W}^{\otimes 2}$ into a direct sum of $\bar{\delta}$-invariant $\bar{\mathfrak{g}}$-submodules
\begin{equation}
\label{splitting}
\bar{W}^{\otimes 2}=\Big(\oplus_{\sigma\in \Sigma_K}\bar{W}_{\sigma}\otimes_{\bar{k}}\bar{W}_{\sigma}\Big)\oplus
\Big(\oplus_{B=\{\sigma,\tau\}\in \Sigma_{K, 2}}(\bar{W}_{\sigma}\otimes_{\bar{k}}\bar{W}_{\tau})\oplus (\bar{W}_{\tau}\otimes_{\bar{k}}\bar{W}_{\sigma})\Big).
\end{equation}
Clearly, the action of the Lie algebra $\bar{\mathfrak{g}}$ on the tensor product $\bar{W}_{\sigma}\otimes_{\bar{k}}\bar{W}_{\sigma}$ factors through 
$$\bar{\mathfrak{g}}_{\sigma}= \sll(\bar{W}_{\sigma})$$
while the action of $\bar{\mathfrak{g}}$ on 
$$\bar{W}^{(B)}:=\left(\bar{W}_{\sigma}\otimes_{\bar{k}}\bar{W}_{\tau}\right)\oplus \left(\bar{W}_{\tau}\otimes_{\bar{k}}\bar{W}_{\sigma}\right)$$ factors
through 
$$\bar{\mathfrak{g}}_{(B)}:=\bar{\mathfrak{g}}_{\sigma,\tau}=\sll(\bar{W}_{\sigma})\oplus \sll(\bar{W}_{\tau}) \ \text{with } B=\{\sigma,\tau\}.$$
We have
$$\bar{W}_{\sigma}\otimes_{\bar{k}}\bar{W}_{\sigma}=\mathbf{S}_{\bar{k}}^2\bar{W}_{\sigma}\oplus \wedge_{\bar{k}}^2\bar{W}_{\sigma}$$
where first summand is a simple $\bar{\mathfrak{g}}_{\sigma}$-module that lies  in  $\mathbf{S}_{\bar{k}}^2\bar{W}$ while the action of $\bar{\mathfrak{g}}_{\sigma}$ (and therefore of $\bar{\mathfrak{g}}$)
on the second one is {\sl trivial}.

Both $\bar{\mathfrak{g}}_{(B)}=\bar{\mathfrak{g}}_{\sigma,\tau}$-modules $\bar{W}_{\sigma}\otimes_{\bar{k}}\bar{W}_{\tau}$ and  $\bar{W}_{\tau}\otimes_{\bar{k}}\bar{W}_{\sigma}$
are faithful simple; in addition, they are  isomorphic. Let us split $\bar{W}^{(B)}$ into a direct sum
$$\bar{W}^{(B)}=\bar{W}_{+}^{(B)}\oplus \bar{W}_{-}^{(B)}$$
of the subspaces $\bar{W}_{+}^{(B)}$ of $\bar{\delta}$-invariants  and $\bar{W}_{-}^{(B)}$ of $\bar{\delta}$-antiinvariants. Clearly, both subspaces
are nonzero  $\bar{\mathfrak{g}}_{\sigma,\tau}$-invariant subspaces and therefore are {\sl nontrivial simple} $\bar{\mathfrak{g}}_{\sigma,\tau}$-modules  that are isomorphic to
$$\bar{W}_{\sigma}\otimes_{\bar{k}}\bar{W}_{\tau} \cong \bar{W}_{\tau}\otimes_{\bar{k}}\bar{W}_{\sigma}.$$
The last sentence remains true if we replace ``$\bar{\mathfrak{g}}_{\sigma,\tau}$-modules'' by ``$\bar{\mathfrak{g}}$-modules''.
Obviously,
$$\bar{W}_{+}^{(B)}\subset \mathbf{S}_{\bar{k}}^2\bar{W}, \ \bar{W}_{-}^{(B)}\subset \wedge_{\bar{k}}^2\bar{W}.$$
It follows from \eqref{splitting} that the 
$\bar{\mathfrak{g}}$-module 
$\wedge_{\bar{k}}^2\bar{W}$ splits into a direct sum of the trivial 
$\bar{\mathfrak{g}}$-module $\oplus_{\sigma\in \Sigma_K}\wedge_{\bar{k}}^2\bar{W}_{\sigma}$ and a direct sum $\oplus_{B\in \Sigma_{K,2}} \bar{W}_{-}^{(B)}$ of nontrivial
mutually non-isomorphic simple $\bar{\mathfrak{g}}$-modules 
$\bar{W}_{-}^{(B)}$. So,
\begin{equation}
\label{S2}
\mathbf{S}_{\bar{k}}^2\bar{W}=\left(\oplus_{\sigma\in \Sigma_K} \mathbf{S}_{\bar{k}}^2\bar{W}_{\sigma}\right)\oplus 
\left(\oplus_{B\in \Sigma_{K,2}}\bar{W}_{+}^{(B)}\right);
\end{equation}
\begin{equation}
\label{Wedge2}
\wedge_{\bar{k}}^2\bar{W}=\left(\oplus_{\sigma\in \Sigma_K} \wedge_{\bar{k}}^2\bar{W}_{\sigma}\right)\oplus 
\left(\oplus_{B\in \Sigma_{K,2}}\bar{W}_{-}^{(B)}\right).
\end{equation}

Clearly,  
$$\bar{W}_{-}^{\{0\}}:=\oplus_{\sigma\in \Sigma_K} \wedge_{\bar{k}}^2\bar{W}_{\sigma}$$
coincides with the subspace of all $\bar{\mathfrak{g}}$-invariants in $\wedge_{\bar{k}}^2\bar{W}$.

Notice that   $\Gal(k)$ acts  naturally on both $\Sigma_K$ and $\Sigma_{K,2}$ in such a way that for all $s \in \Gal(k)$
\begin{equation}
\label{GaloisInv}
s\left(\wedge_{\bar{k}}^2\bar{W}_{\sigma}\right)= \wedge_{\bar{k}}^2\bar{W}_{s\sigma}, \ s\left(\bar{W}_{-}^{(B)}\right)=\bar{W}_{-}^{(sB)}.
\end{equation}
 It follows  that
$$s \bar{W}_{-}^{\{0\}}=\bar{W}_{-}^{\{0\}}.$$ It is also clear that if we put
\begin{equation}
\label{barU}
\bar{U}:=\oplus_{B\in \Sigma_{K,2}}\bar{W}_{-}^{(B)}\subset \wedge_k^2 W.
\end{equation}
then $s \bar{U}=\bar{U}$ for all $s \in \Gal(k)$.  This implies that
both $\bar{W}_{-}^{\{0\}}$  and $\bar{U}$ are {\sl nonzero} $\bar{k}$-vector subspaces that
 are defined over $k$, i.e.,
there are  {\sl nonzero} vector $k$-subspaces
$W_{-}^{\{0\}}$ and $U$ of $\wedge_k^2 W$ such that
$$\bar{W}_{-}^{\{0\}}=W_{-}^{\{0\}}\otimes_k \bar{k}, \ \bar{U}=U\otimes_k \bar{k}.$$
It follows from \eqref{Wedge2} and \eqref{barU} that
\begin{equation}
\label{directS}
\wedge_k^2 W=W_{-}^{\{0\}}\oplus U.
\end{equation}
The $\bar{\mathfrak{g}}$-invariance of both $\bar{k}$-vector subspaces $\bar{W}_{-}^{\{0\}}$ and $\bar{U}$ implies that both $k$-vector subspaces
$W_{-}^{\{0\}}$ and $U$ are $\mathfrak{g}$-submodules of $\wedge_k^2 W$. It is also clear that $W_{-}^{\{0\}}$ coincides with the subspace 
$\left(\wedge^2_k W\right)^{ \mathfrak{g}}$ of all
$\mathfrak{g}$-invariants in $\wedge_k^2 W$. 
Since $U \ne\{0\}$, it follows from \eqref{directS} that
\begin{equation}
\label{notWhole}
 \left(\wedge^2_k W\right)^{ \mathfrak{g}} \ne \wedge^2_k W,
 \end{equation}
which proves (a1) that, in turn, implies (a2), in light of Remark \ref{equiv}.

Combining the equality $\left(\wedge^2_k W\right)^{ \mathfrak{g}}=W_{-}^{\{0\}}$ with \eqref{directS}, we obtain that the property (b1) of our Lemma 
is equivalent to the simplicity of the $\mathfrak{g}$-module $U$.

Let $O$ be a $\Gal(k)$-{\sl orbit} in $\Sigma_{K,2}$. Let us consider the corresponding $\bar{\mathfrak{g}}$-{\sl submodule} of $\bar{U}$ defined by
\begin{equation}
\label{UO}
\bar{U}^{O}=\sum_{B\in O}\bar{W}_{-}^{(B)}.
\end{equation}
Clearly, $s \bar{U}^{O}=\bar{U}^{O}$ for all $s \in \Gal(k)$. This means that $\bar{U}^{O}$
is defined over $k$, i.e., there is a $\mathfrak{g}$-submodule $U^{O}$ of $U$ such that
$$\bar{U}^{O}=U^{O}\otimes_k \bar{k}.$$
 Since all the summands  in  the RHS of \eqref{UO} are pairwise non-isomorphic
simple $\bar{\mathfrak{g}}$-modules that (in light of \eqref{GaloisInv})  are permuted  transitively by $\Gal(k)$,
we conclude  that $U^{O}$ is a {\sl simple} $\mathfrak{g}$-submodule of $U$. Clearly, $U^{O}=U$ if and only if $O=\Sigma_{K,2}$,
i.e., if and only if the action of $\Gal(k)$ on $\Sigma_{K,2}$ is {\sl transitive}. This implies that the $\mathfrak{g}$-module $U$ is simple
if and only if  the action of $\Gal(k)$ on $\Sigma_{K,2}$ is {\sl transitive}. It follows that conditions (b1) and (b3) of our Lemma
are equivalent. By Remark \ref{equiv},  conditions  (b1) and (b2) are equivalent.  This ends the proof.

\end{proof}

\begin{thm}
\label{existenceg}
Let $E$ be a number field of degree $g>2$. Then there exists a simple $g$-dimensional complex torus $T=V/\Lambda$ such that
$$\End^0(T)=E, \ \mathrm{Hdg}(T)=\mathrm{Res}_{E/\BQ}\SL(\left(\mathrm{H}_1(T,\BQ)/E\right).$$
In particular, $T$ is $2$-simple if and only if $E$ is almost doubly transitive.

\end{thm}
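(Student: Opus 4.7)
The plan is to invoke Theorem \ref{existH} with an appropriately chosen target Hodge group. Let $\Pi_{\BQ}$ be any two-dimensional $E$-vector space, so that $\dim_{\BQ}\Pi_{\BQ}=2g$, and let $\mathcal{G}:=\mathrm{Res}_{E/\BQ}\SL(\Pi_{\BQ}/E)\subset \GL(\Pi_{\BQ})$, whose $\BQ$-Lie algebra is $\mathfrak{g}=\sll(\Pi_{\BQ}/E)$. Since $\SL(\Pi_{\BQ}/E)$ is $E$-simple, its Weil restriction $\mathcal{G}$ is $\BQ$-simple and $\mathfrak{g}$ is simple as a $\BQ$-Lie algebra. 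In order to apply Theorem \ref{existH}, I need to produce an element $J_0\in\mathfrak{g}_{\BR}$ with $J_0^2=-1$.

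Using the decomposition $E_{\BR}\cong\prod_{i=1}^{r_E}\BR\times \prod_{j=1}^{s_E}\BC$, write $\Pi_{\BR}=\bigoplus_i \Pi_{\BR,i}$, where each summand is a free module of rank $2$ over the corresponding factor of $E_{\BR}$. Since $\mathfrak{g}_{\BR}=\bigoplus_i \sll(\Pi_{\BR,i}/E_{\BR,i})$, it suffices to build $J_0$ factor by factor; on each $\Pi_{\BR,i}$ choose a basis and take $J_0|_i=\bigl(\begin{smallmatrix}0&-1\\1&0\end{smallmatrix}\bigr)$, which is traceless over $\BR$ (resp.\ over $\BC$) and satisfies $J_0|_i^2=-1$ as an $\BR$-linear operator. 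Assembling these gives the required $J_0\in\mathfrak{g}_{\BR}$. Then Theorem \ref{existH} furnishes $J\in\mathfrak{g}_{\BR}$ with $J^2=-1$ such that, for the complex structure on $\Pi_{\BR}$ defined by $J$ and any full-rank discrete subgroup $\Lambda\subset\Pi_{\BQ}$ (e.g.\ the $\Oc_E$-span of an $E$-basis of $\Pi_{\BQ}$), the resulting complex torus $T=\Pi_{\BR}/\Lambda$ has $\Hdg(T)=\mathcal{G}$.

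It remains to check that $\End^0(T)=E$ and that $T$ is simple. By property (ii) of Hodge groups recalled in Section \ref{HDG}, $\End^0(T)$ is the centralizer of $\mathcal{G}$ in $\End_{\BQ}(\Pi_{\BQ})$. Base-changing to $\bar{\BQ}$ gives $\Pi_{\bar{\BQ}}=\bigoplus_{\sigma\in\Sigma_E}\bar{\BQ}^2_{\sigma}$ with $\mathcal{G}\times_{\BQ}\bar{\BQ}=\prod_{\sigma\in\Sigma_E}\SL_2$, each factor acting in its own summand as the standard representation; hence the commutant of $\mathcal{G}$ in $\End_{\bar{\BQ}}(\Pi_{\bar{\BQ}})$ is $\bigoplus_{\sigma}\bar{\BQ}=E\otimes_{\BQ}\bar{\BQ}$, and by Galois descent the commutant over $\BQ$ is $E$. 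So $\End^0(T)=E$. Simplicity of $T$ (equivalently, irreducibility of the $\mathcal{G}$-module $\Pi_{\BQ}$ as a rational Hodge structure, cf.\ Remark \ref{translation}(i)) follows because any $\mathcal{G}$-stable $\BQ$-subspace $W\subset\Pi_{\BQ}$ base-changes to a $\bar{\BQ}$-subspace of the form $\bigoplus_{\sigma\in S}\bar{\BQ}^2_{\sigma}$ for some subset $S\subset\Sigma_E$; Galois invariance of $W$ forces $S$ to be $\Gal(\BQ)$-stable, and transitivity of $\Gal(\BQ)$ on $\Sigma_E$ gives $S=\emptyset$ or $S=\Sigma_E$. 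The final ``if and only if'' is then Theorem \ref{exteriorH}. The main technical nuisance is the first step, namely checking that the block-diagonal $J_0$ constructed above genuinely lies in $\mathfrak{g}_{\BR}$, i.e.\ that on the complex factors one can find an $E_{\BR,j}$-linear traceless square root of $-1$; this reduces to the observation that $\bigl(\begin{smallmatrix}0&-1\\1&0\end{smallmatrix}\bigr)\in\sll_2(\BC)$ squares to $-I$ as an $\BR$-linear map, which is immediate.
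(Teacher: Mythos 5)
Your proposal is correct, but it routes the key existence step differently from the paper. The paper also starts from $J_0=\bigl(\begin{smallmatrix}0&1\\-1&0\end{smallmatrix}\bigr)\in\Mat_2(E_{\BR})$, but it then invokes \cite[Prop.~2.8]{OZ} to produce a conjugate $J=\exp(u)J_0\exp(-u)$ with the property that the \emph{associative} $\BQ$-subalgebra of $\Mat_2(E)$ whose realification contains $J$ must be all of $\Mat_2(E)$; from this it deduces that the smallest $\BQ$-Lie algebra $\mathfrak{g}$ with $J\in\mathfrak{g}_{\BR}$ is semisimple with centralizer $E$, and then Lemma \ref{keyg} forces $\mathfrak{g}=\sll_2(E)$, after which Theorem \ref{minLie} identifies $\mathfrak{g}$ with $\mathrm{hdg}_T$. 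You instead apply Theorem \ref{existH} directly to $\mathcal{G}=\mathrm{Res}_{E/\BQ}\SL(\Pi_{\BQ}/E)$, whose $\BQ$-Lie algebra $\sll(\Pi_{\BQ}/E)$ is indeed $\BQ$-simple, so the genericity argument happens at the level of Lie subalgebras (via \cite[Lemma 2]{ZarhinSh}) rather than associative subalgebras. Your route is self-contained within the paper and avoids both the external reference to \cite{OZ} and a second use of Lemma \ref{keyg}; the price is that $\End^0(T)=E$ and the simplicity of $T$ no longer fall out of the double-centralizer property of $\Mat_2(E)$ and must be checked separately, which you do correctly by base change to $\bar{\BQ}$ and Galois descent (the weight spaces $\bar{\BQ}^2_{\sigma}$ are pairwise non-isomorphic irreducibles permuted transitively by $\Gal(\BQ)$). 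The block-diagonal construction of $J_0$ over $E_{\BR}\cong\BR^{r_E}\times\BC^{s_E}$ is more elaborate than necessary --- the single matrix $\bigl(\begin{smallmatrix}0&-1\\1&0\end{smallmatrix}\bigr)$ already lies in $\sll_2(E_{\BR})$ and squares to $-1$ --- but it is not wrong. The concluding appeal to Theorem \ref{exteriorH} matches the paper.
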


\begin{proof} Let us consider the matrix
$$J_0=
\begin{bmatrix}
0 & 1\\
-1& 0
\end{bmatrix}
\in \Mat_2(\BQ) \subset \Mat_2(E)\subset \Mat_2(E_{\BR})$$
where $E_{\BR}:=E\otimes_{\BQ}\BR$ is the realification of $E$. By \cite[Prop. 2.8 on p. 19]{OZ},
there is $u \in \Mat_2(E_{\BR})$ such that
$$J=\exp(u) J_0 \exp(-u) =\exp(u) J_0 \left(\exp(u)\right)^{-1} \in  \Mat_2(E_{\BR})$$
enjoys the following property.  If $D$ is a $\BQ$-subalgebra of $\Mat_2(E)$ such that
$D_{\BR}=D\otimes_{\BQ}\BR$ contains $J$ then $D= \Mat_2(E)$.
Notice that 
$$J_0^2=-1, \quad J_0 \in \sll_2(E_{\BR})\subset  \Mat_2(E_{\BR})$$
It follows that
$$J^2=-1, \quad J \in \sll_2(E_{\BR})\subset \Mat_2(E_{\BR}).$$

Let $\mathfrak{g}$  be the smallest $\BQ$-Lie subalgebra of  $\Mat_2(E)$ such that its realification
$$\mathfrak{g}_{\BR}=\mathfrak{g}\otimes_{\BQ}\BR \subset \Mat_2(E)\otimes_{\BQ}\BR=\Mat_2(E_{\BR})$$
contains $J$. Clearly,
\begin{equation}
\label{centerE}
\mathfrak{g}\subset  \sll_2(E)
\end{equation}
 and the $\BQ$-subalgebra of $\Mat_2(E)$  generated by $\mathfrak{g}$
coincides with $\Mat_2(E)$. It makes the $2g$-dimensional  $\BQ$-vector space 
$$E^2=E\oplus E$$  a faithful simple  $\mathfrak{g}$-module
such that the centralizer of  $\mathfrak{g}$ in $\End_{\BQ}(E^2)$ coincides with $E$. This implies that $\mathfrak{g}$ is a reductive $\BQ$-Lie algebra
and its center lies in $E$. In light of \eqref{centerE}, this center is $\{0\}$, i.e.,  $\mathfrak{g}$ is a semisimple $\BQ$-Lie algebra. Applying
Lemma \ref{keyg} to 
$$k=\BQ, \ K=E, \  W=E^2,$$
 we conclude that
\begin{equation}
\label{slGequal}
\mathfrak{g}=  \sll_2(E).
\end{equation}
Now we are ready  to construct the desired complex torus $T$.  The operator $J$ provides the structure of a complex vector space on
$$V:=E^2\otimes_{\BQ}\BR=E_{\BR}^2=E_{\BR}\oplus E_{\BR}$$
such that $J \in \End_{\BR}(V)$ defines multiplication by $\mathbf{i}$. Pick any $\BZ$-lattice of rank $2g$ in $E^2$ and put
$T:=V/\Lambda$.  One may naturally identify $\Lambda\otimes \BQ$ with $E^2$.  In light of Theorem \ref{minLie}, the $\BQ$-Lie algebra $\mathrm{hdg}_T$ coincides
with $\mathfrak{g}$, i.e., $\mathrm{hdg}_T=\sll_2(E)$. It follows that $\mathrm{Hdg}(T)=\mathrm{Res}_{E/\BQ}\SL(\left(\mathrm{H}_1(T,\BQ)/E\right)$,
which ends the proof of the forst assertion of our Theorem. Now the second one follows from Theorem \ref{exteriorH}.

\end{proof}
 
\begin{proof}[Proof of Theorem \ref{existRealT}]
The first assertion follows readily from Theorem \ref{existenceg} combined with Theorem \ref{existReal} applied to $n=g$.

In order to prove the second assertion, one should take
$$\mathbf{s}=g-d-1\ge 0, \quad \mathbf{r}=g-2 \mathbf{s}=g-2(g-d-1)=2(d+1)-g\ge 0.$$

\end{proof}

\section{Semi-linear algebra}
\label{semiLin}
This section contains auxiliary results that will be used for the study of Hodge groups of $2$-simple tori without nontrivial endomorphisms.
In what follows $k$ stands for a field of characteristic $0$ and $K$ for an overfield of $k$ such that the automorphism group $\Aut(K/k)$ of $k$-linear
automorphisms of $K$ enjoys the following property.

{\sl The subfield $K^{\Aut(K/k)}$ of $\Aut(K/k)$-invariants coincides with $k$}. (This property holds if $K$ is an algebraically closed field, see Proposition \ref{AutKk} below.)

\begin{defn}
\label{Vsigma}
Let $\mathcal{V}$ be a finite-dimensional vector space over $K$ and $\sigma \in \Aut(K/k)$. Then the finite-dimensional vector space $^{\sigma}V$ over $K$ 
is defined as follows. Viewed as an additive group, $^{\sigma}V$  coincides with $V$ but multiplication by elements  $a \in K$ is defined in $^{\sigma}V$ by the formula
$$a, v \mapsto \sigma^{-1}(a)v.$$
\end{defn}
Clearly,
$$\dim_K(\mathcal{V})=\dim_K(^{\sigma}\mathcal{V}).$$
\begin{rem}
\label{sigmaM}
\begin{itemize}
\item[(1)]
If $x \in \End_K(\mathcal{V})$ is a $K$-linear operator in $\mathcal{V}$ then
$$x(\sigma^{-1}(a)v)=\sigma^{-1}(a)x(v) \quad \forall a \in K, v \in \mathcal{V}.$$
In other words, one may view $x$ as a $K$-linear operator in $^{\sigma}\mathcal{V}$ that we denote by 
${^{\sigma}\mathrm{id}}(x) \in  \End_K(^{\sigma}\mathcal{V})$. 
\item[(2)]
Let $m:=\dim_K(\mathcal{V})>0$, and $\{e_1, \dots, e_m\}$ be a basis of $\mathcal{V}$. Then one may view
$\{e_1, \dots, e_m\}$ as a basis of ${^{\sigma}\mathcal{V}}$.

If $A=(a_{ij})_{i,j=1}^m$ is the matrix of $x \in \End_K(\mathcal{V})$ with respect to $\{e_1, \dots, e_m\}$
then  obviously $\sigma(A)=(\sigma(a_{ij}))_{i,j=1}^m$ is the matrix of ${^{\sigma}x} \in \End_K(^{\sigma}\mathcal{V})$ with respect to $\{e_1, \dots, e_m\}$.
\end{itemize}
\end{rem}
\begin{lem}
\label{EndVsigma}
The formula
$$^{\sigma}\mathrm{id}: \End_K(\mathcal{V}) \to \End_K({^{\sigma}\mathcal{V}}),  \quad x \mapsto 
^{\sigma}\mathrm{id}(x)=
\{v \mapsto x(v)\} \ \forall x \in \End_K(\mathcal{V}),
 \ v \in {^{\sigma}\mathcal{V}}=\mathcal{V}$$
defines a ring isomorphism that
enjoys the following properties.

\begin{itemize}
\item[(i)]
${^{\sigma}\mathrm{id}}(ax) =\sigma(a)\cdot  {^{\sigma}\mathrm{id}}(x) \quad \forall a \in K, x \in \End_K(\mathcal{V}).$
\item[(ii)]
Let
$$\mathcal{P}_{x, \min}(t), \ \mathcal{P}_{x, \fchar}(t)\in K[t]$$
be the minimal and characteristic polynomials of $x$ respectively.  

Then  the minimal and characteristic polynomials of ${^{\sigma}\mathrm{id}}(x)$ coincide with
$\sigma(\mathcal{P}_{x, \min}(t))$ and $\sigma(\mathcal{P}_{x, \fchar}(t))$ respectively.
\item[(iii)]
If $a\in K$ is the trace of $x \in \End_K(\mathcal{V}) $ then $\sigma(a)$ is  the trace of 
${^{\sigma}\mathrm{id}}(x) \in  \End_K({^{\sigma}\mathcal{V}})$.
\end{itemize}
\end{lem}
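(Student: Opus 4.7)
The plan is to unwind the definition of $^{\sigma}\mathcal{V}$ and verify each claim by a short direct computation, leaning on Remark \ref{sigmaM}.

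First, I would check that $^{\sigma}\mathrm{id}$ is well-defined as a map into $\End_K({^{\sigma}\mathcal{V}})$: the identity $x(\sigma^{-1}(a)v)=\sigma^{-1}(a)x(v)$ for $x \in \End_K(\mathcal{V})$, which is exactly the content of Remark \ref{sigmaM}(1), says that $x$ commutes with each operation ``multiplication by $a$'' in $^{\sigma}\mathcal{V}$ (this operation being multiplication by $\sigma^{-1}(a)$ in $\mathcal{V}$). Additivity and multiplicativity of $^{\sigma}\mathrm{id}$ are then immediate because the underlying set-theoretic map is the identity on $\mathcal{V}$. To see that $^{\sigma}\mathrm{id}$ is a bijection, I would apply the same construction with $\sigma^{-1}$ in place of $\sigma$: since $^{\sigma^{-1}}\!\big({^{\sigma}\mathcal{V}}\big)=\mathcal{V}$, this produces a two-sided inverse.

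For (i), for any $v \in {^{\sigma}\mathcal{V}}=\mathcal{V}$,
$$
{^{\sigma}\mathrm{id}}(ax)(v) \;=\; (ax)(v) \;=\; a\cdot_{\mathcal{V}} x(v) \;=\; \sigma(a)\cdot_{^{\sigma}\mathcal{V}} x(v) \;=\; \sigma(a)\cdot{^{\sigma}\mathrm{id}}(x)(v),
$$
using the definition of scalar multiplication in $^{\sigma}\mathcal{V}$ (so that $a\cdot_{\mathcal{V}}w = \sigma(a)\cdot_{^{\sigma}\mathcal{V}}w$ for all $w$).

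For (ii) and (iii), I would pick a basis $\{e_1,\dots,e_m\}$ of $\mathcal{V}$; by Remark \ref{sigmaM}(2) this is simultaneously a basis of $^{\sigma}\mathcal{V}$, and if $A=(a_{ij})$ is the matrix of $x$ in this basis, then $\sigma(A)=(\sigma(a_{ij}))$ is the matrix of ${^{\sigma}\mathrm{id}}(x)$. Since $\sigma$ extends to a ring automorphism $K[t]\to K[t]$ fixing $t$, we have
$$
\det(tI-\sigma(A)) \;=\; \sigma\!\big(\det(tI-A)\big) \;=\; \sigma\!\big(\mathcal{P}_{x,\fchar}(t)\big),
$$
and the same automorphism sends the minimal polynomial of $A$ to that of $\sigma(A)$, giving (ii); the trace identity $\mathrm{tr}(\sigma(A))=\sum_i \sigma(a_{ii})=\sigma(\mathrm{tr}\,A)$ gives (iii). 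There is no genuine obstacle here; the only thing to keep careful track of is the distinction between the two scalar multiplications $a\cdot_{\mathcal{V}}$ and $a\cdot_{^{\sigma}\mathcal{V}}$.
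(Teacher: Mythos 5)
Your proposal is correct and follows essentially the same route as the paper, which simply declares (i) obvious and derives (ii) and (iii) from Remark \ref{sigmaM}; you have merely written out the details (the identity of underlying additive maps, the two scalar multiplications, and the matrix $\sigma(A)$ computation) that the paper leaves implicit.
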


\begin{proof}
(i)  is obvious.
Both assertions (ii) and (iii) follow from Remark \ref{sigmaM}.
\end{proof}

Let $\mathcal{V}_0$ be a finite-dimensional $k$-vector space and 
$$\mathcal{V}:=\mathbf{T}_{k,K}(\mathcal{V})=\mathcal{V}_0\otimes_k K$$ the corresponding $K$-vector space endowed by the following semi-linear action of $\Aut(K/k)$.
$$\sigma(v_0 \otimes a)=v_0 \otimes \sigma(a) \quad \forall a \in K, v_0 \in \mathcal{V}_0.$$
We will identify $\mathcal{V}_0$ with the $k$-vector subspace
$$\mathcal{V}_0 \otimes 1=\{v_0\otimes 1\mid v_0 \in \mathcal{V}_0\}\subset
\mathcal{V}_0\otimes_k K=\mathcal{V} .$$
Clearly, the $k$-vector subspace  $\mathcal{V}_0=\mathcal{V}_0 \otimes 1$ coincides with the $k$-vector subspace $\mathcal{V}^{\Aut(K/k)}$ of $\Aut(K/k)$-invariants.

The next asertion is probably known but I was unable to find a reference.
\begin{lem}
\label{subspace}
Let $\mathcal{W}$ be a $K$-vector subspace of $\mathcal{V}$. Then the following conditions are equivalent.
\begin{itemize}
\item[(i)]
$\mathcal{W}$ is $\Aut(K/k)$-invariant.
\item[(ii)]
There exists a $k$-vector subspace $\mathcal{W}_0$ of $\mathcal{V}_0$ such that
$$\mathcal{W}=\mathbf{T}_{k,K}(\mathcal{W}_0)=\mathcal{W}_0\otimes_k K
\subset\mathcal{V}_0\otimes_k K=\mathbf{T}_{k,K}(\mathcal{V}_0).$$
\end{itemize}
If this is the case then  $\mathcal{W}_0=\mathcal{W} \cap\mathcal{V}_0$.
\end{lem}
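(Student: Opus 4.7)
The implication (ii) $\Rightarrow$ (i) is immediate, since if $\mathcal{W}=\mathcal{W}_0\otimes_k K$ then any $\sigma\in\Aut(K/k)$ sends a generator $w_0\otimes a$ to $w_0\otimes\sigma(a)$, still in $\mathcal{W}_0\otimes_k K$. For the interesting direction (i) $\Rightarrow$ (ii), the plan is to set $\mathcal{W}_0:=\mathcal{W}\cap\mathcal{V}_0$, a $k$-subspace of $\mathcal{V}_0$, and verify $\mathcal{W}=\mathcal{W}_0\otimes_k K$; the inclusion $\supseteq$ is clear since $\mathcal{W}$ is a $K$-subspace containing $\mathcal{W}_0$, so only $\subseteq$ requires work.

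The heart of the argument will be the following key sublemma: \emph{if $\mathcal{W}\neq 0$ is $\Aut(K/k)$-invariant, then $\mathcal{W}\cap\mathcal{V}_0\neq 0$.} To prove it I would fix a $k$-basis $\{e_i\}$ of $\mathcal{V}_0$ (which is simultaneously a $K$-basis of $\mathcal{V}$) and define the support of any $v\in\mathcal{V}$ relative to this basis. Among nonzero elements of $\mathcal{W}$, choose one, $w=\sum_{i=1}^n e_i\otimes a_i$ with all $a_i\neq 0$, of minimum support size $n\geq 1$, and rescale to $w':=a_1^{-1}w = e_1+\sum_{i\geq 2}e_i\otimes b_i\in\mathcal{W}$. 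For any $\sigma\in\Aut(K/k)$, the element $\sigma(w')-w' = \sum_{i\geq 2}e_i\otimes(\sigma(b_i)-b_i)$ lies in $\mathcal{W}$ and has support of size at most $n-1$, so by minimality of $n$ it must vanish. Hence $w'$ is $\Aut(K/k)$-invariant, giving $b_i\in K^{\Aut(K/k)}=k$ for every $i$; thus $w'\in\mathcal{V}_0$, yielding a nonzero element of $\mathcal{W}\cap\mathcal{V}_0$.

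With the sublemma secured, I would conclude by induction on $\dim_K\mathcal{W}$. Given $\mathcal{W}\neq 0$, pick $u\in\mathcal{W}_0\setminus\{0\}$ by the sublemma, extend $u$ to a $k$-basis of $\mathcal{V}_0$, and split off $\mathcal{V}_0=ku\oplus\mathcal{V}_0'$ and $\mathcal{V}=Ku\oplus(\mathcal{V}_0'\otimes_k K)$. Since $Ku\subset\mathcal{W}$, restricting this direct sum to $\mathcal{W}$ yields $\mathcal{W}=Ku\oplus\mathcal{W}''$, where $\mathcal{W}'':=\mathcal{W}\cap(\mathcal{V}_0'\otimes_k K)$ is an $\Aut(K/k)$-invariant $K$-subspace of strictly smaller $K$-dimension sitting inside $\mathcal{V}_0'\otimes_k K$. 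Applying the inductive hypothesis to $\mathcal{W}''$ gives $\mathcal{W}''=(\mathcal{W}\cap\mathcal{V}_0')\otimes_k K$, and combining with the obvious decomposition $\mathcal{W}_0=ku\oplus(\mathcal{W}\cap\mathcal{V}_0')$ assembles into $\mathcal{W}=\mathcal{W}_0\otimes_k K$, as required; the identity $\mathcal{W}_0=\mathcal{W}\cap\mathcal{V}_0$ holds by construction.

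The main obstacle is really the sublemma: one needs to extract a nonzero fixed vector from a nonzero invariant subspace, which is a Speiser/Hilbert-90-style statement. The minimal-support trick will do the job, and the blanket hypothesis $K^{\Aut(K/k)}=k$ from Section~\ref{semiLin} enters exactly at the step where one concludes $b_i\in k$; without this fixed-field condition the argument produces no vector inside $\mathcal{V}_0$ and the whole scheme collapses.
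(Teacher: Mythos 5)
Your proof is correct, and while its overall skeleton matches the paper's (both reduce everything to showing that a nonzero $\Aut(K/k)$-invariant subspace meets $\mathcal{V}_0$ nontrivially, and then bootstrap), the two key steps are carried out differently. The paper first settles the case $\dim_K\mathcal{W}=1$ by the coordinate-normalization trick, then proves $\mathcal{W}\cap\mathcal{V}_0\neq\{0\}$ in general by a double induction on $\dim_K\mathcal{W}$ and $\dim_K\mathcal{V}$ using hyperplane sections, and finally obtains the full equality $\mathcal{W}=(\mathcal{W}\cap\mathcal{V}_0)\otimes_k K$ by choosing a $k$-complement to $\mathcal{W}\cap\mathcal{V}_0$ in $\mathcal{V}_0$ and deriving a contradiction from a second application of the nonvanishing statement. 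Your minimal-support argument proves the key sublemma in one stroke: it is exactly the classical Galois-descent argument, of which the paper's Step~1 is the special case $n=1$, and it dispenses with the auxiliary induction on the ambient dimension. Your concluding induction on $\dim_K\mathcal{W}$, peeling off one $k$-rational line $Ku$ at a time, is also a cleaner packaging of the paper's complement-plus-contradiction in Step~3 (the direct-sum decompositions $\mathcal{W}=Ku\oplus\mathcal{W}''$ and $\mathcal{W}\cap\mathcal{V}_0=ku\oplus(\mathcal{W}\cap\mathcal{V}_0')$ that you use are both immediate). You correctly identify that the hypothesis $K^{\Aut(K/k)}=k$ enters precisely when concluding that the coefficients $b_i$ lie in $k$; this is the same point at which the paper uses it. In short: same theorem, same strategy at the top level, but a more economical and more standard execution of the crucial descent step.
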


\begin{proof}
Let us put
$$m:=\dim_k(\mathcal{V}_0)=\dim_K(\mathcal{V});  \quad n:=\dim_K(\mathcal{W}) \le m.$$
If either $n=0$ or $n=m$  then the desired result is obvious. So, we may and will assume that
$0<n<m$, i.e.,
$$1 \le n \le m-1; \quad m \ge 2.$$

Let us fix a $k$-basis $\{e_1, \dots, e_m\}$ of $\mathcal{V}_0$, which we will also view as a $K$-basis of $\mathcal{V}$.

{\bf Step 1}. Assume that $n=1$. 
Take a nonzero vector $w \in \mathcal{W}$. Then  at least one of its coefficients with respect to our basis is not $0$, i.e.,
$$w=\sum_{i=1}^n a_i e_i,  \ a_i\in K$$
and $\exists j \in \{1, \dots,n\}$ such that $a_j \ne 0$.   Replacing $w$ by $a_j^{-1}w \in \mathcal{W}$, we may and will assume that $a_j=1$.
Then
$$K\cdot w=\mathcal{W}\ni \sigma(w)=\sum_{i=1}^n \sigma(a_i) e_i \quad \forall \sigma \in \Aut(K/k).$$
We have
$$\sigma(a_j)=\sigma(1)=1=a_j$$
and 
$$\sigma(w) \in \mathcal{W}=K \cdot w.$$
Since both $w$ and $\sigma(w)$ have the same (non-zero) $j$th coordinate,
we conclude that $\sigma(w)=w$ for all $\sigma$, i.e., all the coefficients $a_i \in k$ and therefore $w\in \mathcal{V}_0$ and $\mathcal{W}=\mathcal{W}_0\otimes_k K$ with
$$\mathcal{W}_0=k \cdot w \subset \mathcal{V}_0.$$
So, we have proven our assertion in the case of $n=1$. 

{\bf Step 2}. Let us prove that the $k$-vector subspace of $\Aut(K)$-invariants
\begin{equation}
\label{Winv}
\tilde{\mathcal{W}}_0:=\mathcal{W}^{\Aut(K/k)}=\mathcal{W}\cap V_0
\end{equation}
is {\sl not} $\{0\}$. Let us use induction by $n$ and $m$. By Step 1, our assertion is true for $n=1$. This implies its validity for for $m=2$. So, we may assume that
$$1<n<m, \quad m>2.$$
Let us consider the hyperplanes 
$$\mathcal{H}_0=\sum_{i=1}^{m-1}k\cdot  e_i\subset V_0, \quad \mathcal{H}=\mathbf{T}_{k,K}(\mathcal{H}_0)=\mathcal{H}_0\otimes_k K=
\sum_{i=1}^{m-1}K\cdot  e_i\subset \mathcal{V}.$$
Clearly, the intersection 
$$\mathcal{W}_H=\mathcal{W}\cap \mathcal{H}\subset \mathcal{H}$$
is an $\Aut(K/k)$-invariant subspace of both $\mathcal{W}$ and $\mathcal{H}$.
Clearly, either $\mathcal{W}_H=\mathcal{W}$ or $\dim_K(\mathcal{W}_H)=n-1>0$.
In the former case $\mathcal{W}_H$  is $\Aut(K/k)$-invariant subspace of the $(m-1)$-dimensional
$K$-vector space $\mathcal{H}=\mathcal{H}_0\otimes_k K$. Now the induction assumption for $m$ (applied to $\mathcal{H}$  instead of $\mathcal{V}$) implies that
$$\tilde{\mathcal{W}}_0=\mathcal{W}^{\Aut(K/k)}=(\mathcal{W}_H)^{\Aut(K/k)} \ne 0.$$
In the latter case, the induction assumption for $n$ applied to  $\mathcal{W}_H$  implies that
 $(\mathcal{W}_H)^{\Aut(K/k)} \ne 0$. Since $\mathcal{W}\supset \mathcal{W}_H$,
 we get $\mathcal{W}^{\Aut(K/k)}\ne 0$, which ends the proof.

{\bf Step 3} We have
$$\mathbf{T}_{k,K}(\tilde{\mathcal{W}}_0)=\tilde{\mathcal{W}}_0\otimes_k K\subset \mathcal{W}.$$
This implies that
$$n_0=\dim_k(\tilde{\mathcal{W}}_0) \le \dim_K(W)=n.$$
The assertion of our Lemma actually means that the equality holds.  By Step 2, $n_0>0$.
Suppose that $n_0<n$ and choose in $\mathcal{V}_0$ a $(m-n_0)$-dimensional 
$k$-vector subspace $\mathcal{U}_0$ such that $\mathcal{U}_0\cap \tilde{\mathcal{W}}_0=\{0\}$ (i.e., $\mathcal{V}_0=\mathcal{W}_0\oplus \mathcal{U}_0$). Let us consider the $(m-n_0)$-dimensional $K$-vector subspace
$$\mathcal{U}=\mathbf{T}_{k,K}(\mathcal{U}_0)=\mathcal{U}_0\otimes_k K\subset \mathcal{V}.$$
Clearly,
$$\mathcal{V}=\mathbf{T}_{k,K}(\tilde{\mathcal{W}}_0)\oplus \mathbf{T}_{k,K}(\tilde{\mathcal{U}}_0)=
\mathbf{T}_{k,K}(\tilde{\mathcal{W}}_0)\oplus\mathcal{U},$$
and therefore
$$\mathcal{U}\cap \mathbf{T}_{k,K}(\tilde{\mathcal{W}}_0)=\{0\}.$$
Dimension arguments imply that
$\mathcal{U}_1:=\mathcal{U}\cap  \mathcal{W}$ is a nonzero $\Aut(K/k)$-invariant $K$-vector subspace of $\mathcal{V}$.
By Step 2,  the subspace $\tilde{\mathcal{U}}_1:=  \mathcal{U}_1^{\Aut(K/k)} \ne \{0\}$; on the other hand, $\tilde{\mathcal{U}}_1$ obviously lies in $\mathcal{W}^{\Aut(K/k)}$
but meets the latter only at $\{0\}$. The obtained contradiction proves that $n_0=n$, which ends the proof.
\end{proof}

\begin{rem}
\label{dualKk}
Let us consider the dual vector spaces
$$\mathcal{V}_0^{*}=\Hom_k(\mathcal{V}_0,k), \quad \mathcal{V}^{*}=\Hom_K(\mathcal{V},K).$$
 Obviously, the restriction map
$$\mathrm{res}_{K,k}:\mathcal{V}^{*}=\Hom_K(\mathcal{V},K)=\Hom_K(\mathcal{V}_0\otimes_k K,K)  \to \Hom_k(\mathcal{V}_0,K),  \phi \mapsto \{v_0 \mapsto \phi(v_0\otimes 1)\}$$
is a $\Aut(K/k)$-equivariant  isomorphism of $K$-vector spaces where the actions of   $\Aut(K/k)$ are defined as follows.
$$\sigma: \phi \mapsto \sigma \circ \phi \circ \sigma^{-1} \quad \forall \phi \in \Hom_K(\mathcal{V},K),$$
$$\sigma: \phi_0 \mapsto \{v_0 \mapsto \sigma( \phi_0(v_0)) \quad \forall \phi_0 \in \Hom_k(\mathcal{V}_0,K)$$
for all $\sigma \in \Aut(K/k)$. As usual, we have
$$\sigma(\phi)(\sigma(v))=\sigma(\phi(v)) \quad \forall v\in \mathcal{V}, \phi \in \mathcal{V}^{*}, \sigma \in \Aut(K/k).$$
\end{rem}

\begin{sect}
\label{LieMod}
What is discussed in this section (and in Theorem \ref{mainLie} below) is pretty well known in the case of $k=\BR$ and $K=\BC$, see \cite{On}.

Let $\mathfrak{u}$ be a Lie $k$-algebra of finite dimension and
$$\bar{\mathfrak{u}}:=\mathfrak{u}\otimes_k K$$
the corresponding  finite-dimensional Lie $K$-algebra. Let
$$\rho: \mathfrak{u} \to  \End_K(\mathcal{V})$$
be a homomorphism of Lie $k$-algebras. Extending $\rho$ by $K$-linearity, we get the homomorphism of Lie $K$-algebras
$$\bar{\rho}: \bar{\mathfrak{u}} \to  \End_K(\mathcal{V}),$$
which coincides with $\rho$ on 
$$\mathfrak{u}=\mathfrak{u}\otimes 1\subset \mathfrak{u}\otimes_k K=\bar{\mathfrak{u}}.$$
  Thus $\bar{\rho}$ endows $\mathcal{V}$ with the structure 
of a  $\bar{\mathfrak{u}}$-module.

If $\sigma \in \Aut(K/k)$ then we may define the composition
$${^{\sigma}\rho}: \mathfrak{u}\overset{\rho}{\to} \End_K(\mathcal{V}) \overset{^{\sigma}\mathrm{id}}{\to}  \End_K({^{\sigma}\mathcal{V}}),$$
which is a  homomorphism of $k$-Lie algebras.  Then the corresponding homomorphism of Lie  $K$-algebras
$$\overline{^{\sigma}\rho}: \bar{\mathfrak{u}} \to  \End_K({^{\sigma}\mathcal{V}}),$$
provides ${^{\sigma}\mathcal{V}}$ with the  structure
of a   $\bar{\mathfrak{u}}$-module.

\begin{rem}
\label{invarW}
Let $\mathcal{W}$ be a $K$-vector subspace  of $\mathcal{V}$. 
Clearly,   $\mathcal{W}$ is $\mathfrak{u}$-invariant if and only if it is $\bar{\mathfrak{u}}$-invariant. It follows easily that  $\mathcal{W}$
is a $\bar{\mathfrak{u}}$-submodule of $\mathcal{V}$  if and only if it is a $\bar{\mathfrak{u}}$-submodule of ${^{\sigma}\mathcal{V}}$.
This implies that
the  $\bar{\mathfrak{u}}$-module $\mathcal{V}$ is {\sl simple} if and only if  the  $\bar{\mathfrak{u}}$-module ${^{\sigma}\mathcal{V}}$ is {\sl simple}.
\end{rem}
\end{sect}

\begin{sect}
\label{conjAut}
Let $\mathcal{V}_0$ be a finite dimensional $k$-vector space endowed with a homomorphism of $k$-Lie algebras
$$\rho_0: \mathfrak{u} \to \End_k(\mathcal{V}_0)$$
that endowed $\mathcal{V}_0$  with the structure of a $\mathfrak{u}$-module. Let us consider the $K$-vector space
$\mathcal{V}:=\mathcal{V}_0\otimes_k K$ and the obvious homomorphism of $k$-Lie algebras
$$\rho_0\otimes 1:\mathfrak{u}= \mathfrak{u}  \otimes 1 \to \End_k(\mathcal{V}_0)\otimes_k K =\End_K(\mathcal{V}_0\otimes K)=\End_K(\mathcal{V}).$$
obtained from $\rho_0$ by extension of scalars.

Let $\mathcal{W}$ be a  $\mathfrak{u}$-invariant $K$-vector subspace of $\mathcal{V}$. If $\sigma \in \Aut(K/k)$ then obviously $\sigma(\mathcal{W})$ is also a  
$\mathfrak{u}$-invariant $K$-vector subspace of $\mathcal{V}$. Clearly, both $\mathcal{W}$ and $\sigma(\mathcal{W})$ carry the natural structure of modules over the Lie $K$-algebra
 $$\bar{\mathfrak{u}}= \mathfrak{u}\otimes_k K.$$
 We will need the following assertion.
\end{sect}

\begin{prop}
\label{sigmaW}
The $\bar{\mathfrak{u}}$-modules $\sigma(\mathcal{W})$  and ${^{\sigma}\mathcal{W}}$  are isomorphic.
\end{prop}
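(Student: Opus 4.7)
The plan is to produce an explicit isomorphism $\phi \colon {^{\sigma}\mathcal{W}} \to \sigma(\mathcal{W})$, namely $\phi(w) := \sigma(w)$, and to verify that it is a well-defined $\bar{\mathfrak{u}}$-module isomorphism. The key observation that makes everything work is that since the $\bar{\mathfrak{u}}$-module structure on $\mathcal{V}$ was obtained by extension of scalars from a $\mathfrak{u}$-module $\mathcal{V}_0$, every operator $\rho(x) = \rho_0(x)\otimes 1$ ($x \in \mathfrak{u}$) acts only on the first tensor factor and therefore commutes with the semi-linear action
$$\sigma \colon \mathcal{V}=\mathcal{V}_0\otimes_k K \to \mathcal{V}, \quad v_0\otimes a \mapsto v_0\otimes \sigma(a).$$

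Given this, I would proceed in the following steps. First, I would point out that $\sigma \colon \mathcal{V}\to \mathcal{V}$ is a $k$-linear additive bijection satisfying $\sigma(av)=\sigma(a)\sigma(v)$ for all $a\in K$, $v\in \mathcal{V}$, and that it commutes with each $\rho(x)$ by the observation above. Consequently, $\sigma$ restricts to a $k$-linear bijection $\mathcal{W}\to \sigma(\mathcal{W})$, and this restriction is $\mathfrak{u}$-equivariant. Second, viewing this restriction as a map $\phi\colon {^{\sigma}\mathcal{W}}\to \sigma(\mathcal{W})$, I would check $K$-linearity using the twisted scalar action on the source: for $a\in K$ and $w \in {^{\sigma}\mathcal{W}}$,
$$\phi(a\cdot_{^{\sigma}}w)=\phi(\sigma^{-1}(a)w)=\sigma(\sigma^{-1}(a)w)=\sigma(\sigma^{-1}(a))\sigma(w)=a\sigma(w)=a\phi(w),$$
so $\phi$ is a $K$-linear bijection.

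Third, I would combine $\mathfrak{u}$-equivariance with $K$-linearity: since $\bar{\mathfrak{u}}=\mathfrak{u}\otimes_k K$ acts on both modules by $K$-linear extension of the $\mathfrak{u}$-action, and $\phi$ is both $\mathfrak{u}$-equivariant and $K$-linear with respect to the two module structures in play, $\phi$ is automatically $\bar{\mathfrak{u}}$-equivariant. Finally I would record that $\phi$ is bijective because $\sigma$ is, concluding that $\phi$ is the required isomorphism of $\bar{\mathfrak{u}}$-modules.

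There is essentially no hard step here; the only potential pitfall is the bookkeeping around two different $K$-vector space structures on the same underlying additive group, and in particular remembering that the $\bar{\mathfrak{u}}$-action on ${^{\sigma}\mathcal{W}}$ is, at the level of $\mathfrak{u}$, literally the same as on $\mathcal{W}$, whereas the $K$-scalar multiplication has been twisted. The main conceptual point worth highlighting is that the ``change of $K$-structure'' on the source is exactly counterbalanced by the semi-linearity of $\sigma$ on the target, which is why the naive map $w\mapsto\sigma(w)$ turns out to be $K$-linear.
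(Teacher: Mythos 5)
Your proof is correct and is essentially the paper's own argument: the paper writes down the inverse map $\Pi\colon \sigma(\mathcal{W})\to {^{\sigma}\mathcal{W}}$, $\sigma(w)\mapsto w$, and verifies the same two points you do, namely that the semilinearity of $\sigma$ cancels the twisted scalar action (giving $K$-linearity) and that the $\mathfrak{u}$-action commutes with the $\Aut(K/k)$-action (giving equivariance, which upgrades to $\bar{\mathfrak{u}}$-equivariance by $K$-linearity). The only difference is the direction in which the isomorphism is written.
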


\begin{proof}

It suffices to check that the $\mathfrak{u}$-modules $\sigma(\mathcal{W})$  and ${^{\sigma}\mathcal{W}}$  are isomorphic.
Let us consider the $k$-linear isomorphism
$$\Pi: \sigma(\mathcal{W}) \to   {^{\sigma}\mathcal{W}}, \quad \sigma(w) \mapsto w=\sigma^{-1}( \sigma(w)) \ \forall w \in \mathcal{W}.$$
Actually, $\Pi$ is $K$-linear, because  for all $a \in K, w\in \mathcal{W}$ the vector
$$\sigma^{-1}(a \sigma(w))=\sigma^{-1}(a)w \in \mathcal{W}$$
(recall that in ${^{\sigma}\mathcal{W}}$ multiplication by $a$ is defined as multiplication by $\sigma^{-1}(a)$).
Clearly, the actions of  $\mathfrak{u}$ and $\Aut(K/k)$ on $\mathcal{V}$ do commute.  This implies that
$$\Pi\circ \sigma=\sigma\circ \Pi \quad \forall \sigma \in \Aut(K/k).$$
It follows that $\Pi$ is an isomorphism of $\mathfrak{u}$-modules, which ends the proof.

\end{proof}

\section{Representations of semisimple Lie algebras and highest weights}
\label{RepsKk}
Throughout this section, $K$ is an {\sl algebraically closed} field of characteristic $0$ that contains a subfield $k$. We will need the following assertion.
\begin{thm}
\label{Conrad319}
Let $F$ be a subfield of $K$. Then every automorphism of $F$ can be extended to an automorphism of $K$.
\end{thm}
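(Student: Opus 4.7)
The plan is to combine the standard extension principle for algebraic closures with the existence of a transcendence basis. Let $\sigma\in \Aut(F)$ be the given automorphism of $F$.

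First I would choose a transcendence basis $B\subset K$ of the extension $K/F$. Because $K$ is algebraically closed, $K$ is then an algebraic closure of the purely transcendental extension $F(B)$.

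Second I would extend $\sigma$ to an automorphism $\tilde\sigma\colon F(B)\to F(B)$ by declaring $\tilde\sigma|_F=\sigma$ and $\tilde\sigma(b)=b$ for all $b\in B$. Since the elements of $B$ are algebraically independent over $F$, the assignment extends uniquely from $F$ and $B$ to the polynomial ring $F[B]$ and then to its field of fractions $F(B)$; and since $\sigma$ is a bijection on $F$ while $\tilde\sigma$ fixes $B$ pointwise, the resulting map $\tilde\sigma$ is a field automorphism of $F(B)$.

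Third I would invoke the classical Zorn's lemma argument asserting that any isomorphism $\varphi\colon L_1\to L_2$ of fields extends to an isomorphism of algebraic closures $\bar L_1\to\bar L_2$: one considers the poset of pairs $(M,\hat\varphi)$ with $F(B)\subset M\subset K$ and $\hat\varphi\colon M\to K$ extending $\tilde\sigma$, ordered by extension; Zorn produces a maximal such $(M,\hat\varphi)$, and maximality combined with the fact that $K$ is an algebraic closure of $F(B)$ forces $M=K$ and $\hat\varphi(M)=K$. Applied to $\tilde\sigma\colon F(B)\to F(B)$ with both algebraic closures equal to $K$, this yields the desired automorphism of $K$ whose restriction to $F$ is $\sigma$. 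There is no genuine obstacle here; the statement is entirely standard and rests only on the axiom of choice (used both for the existence of a transcendence basis and for the Zorn step), so the main ``work'' is just organising these two invocations correctly.
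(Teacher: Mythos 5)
Your proof is correct. The paper gives no argument of its own for this statement --- it simply cites Theorem 3.19 of Conrad's notes on Zorn's lemma --- and your argument (pick a transcendence basis $B$ of $K/F$, extend $\sigma$ coefficientwise to the purely transcendental extension $F(B)$ while fixing $B$, then apply the Zorn-based isomorphism extension theorem to the algebraic closure $K$ of $F(B)$, noting that the image of the resulting embedding is an algebraically closed subfield over which $K$ is algebraic and hence is all of $K$) is exactly the standard proof of the cited result.
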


\begin{proof}
This is a special case (in characteristic 0) of Theorem 3.19  in \cite{ConradK}.
\end{proof}

 In order to be able to apply results of Section \ref{semiLin} to $K/k$, we need the following  assertion.

\begin{prop}
\label{AutKk}
$K^{\Aut(K/k)}=k$.
\end{prop}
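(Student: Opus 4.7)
The plan is to prove the contrapositive: for any $\alpha \in K \setminus k$, I will construct an element $\sigma \in \Aut(K/k)$ with $\sigma(\alpha) \neq \alpha$. The strategy is to first produce a $k$-automorphism of some intermediate subfield of $K$ containing $\alpha$ that moves $\alpha$, and then invoke Theorem~\ref{Conrad319} to extend it to an automorphism of the whole field $K$. Note that the extension automatically remains $k$-linear, since any extension of a $k$-fixing automorphism of a subfield containing $k$ must fix $k$.

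There are two cases, depending on whether $\alpha$ is algebraic or transcendental over $k$. Suppose first that $\alpha$ is algebraic over $k$. Let $\bar{k}$ denote the algebraic closure of $k$ inside $K$ (which contains $\alpha$). Since $\fchar(k)=0$, the minimal polynomial of $\alpha$ over $k$ is separable, and since $\alpha \notin k$, it has degree $\geq 2$, hence possesses a root $\alpha' \neq \alpha$ in $\bar{k}$. The $k$-isomorphism $k(\alpha) \to k(\alpha')$ sending $\alpha \mapsto \alpha'$ then extends to a $k$-automorphism of $\bar{k}$ by the usual isomorphism extension theorem for algebraic closures.

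Now suppose $\alpha$ is transcendental over $k$. Then the $k$-algebra homomorphism $k[x] \to k[x]$ sending $x \mapsto x+1$ induces a $k$-automorphism of $k(\alpha)$ sending $\alpha \mapsto \alpha+1 \neq \alpha$.

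In either case I have produced a $k$-automorphism $\tau$ of a subfield $F \subset K$ (either $F = \bar{k}$ or $F = k(\alpha)$) with $\tau(\alpha) \neq \alpha$. Applying Theorem~\ref{Conrad319} to $\tau$ yields an automorphism $\sigma$ of $K$ extending $\tau$; since $\sigma|_F = \tau$ fixes $k \subset F$ pointwise, $\sigma \in \Aut(K/k)$, and $\sigma(\alpha) = \tau(\alpha) \neq \alpha$, proving $\alpha \notin K^{\Aut(K/k)}$. The reverse inclusion $k \subset K^{\Aut(K/k)}$ is trivial. The main (and only real) obstacle here is the extension step in the transcendental case, where Theorem~\ref{Conrad319} does the heavy lifting for us; without that result one would need an explicit Zorn's-lemma-plus-isomorphism-extension argument based on choosing compatible transcendence bases.
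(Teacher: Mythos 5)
Your proof is correct and follows essentially the same route as the paper: the contrapositive, a case split on whether $\alpha$ is algebraic or transcendental over $k$, production of a $k$-automorphism of an intermediate field moving $\alpha$, and extension to all of $K$ via Theorem~\ref{Conrad319}. The only (immaterial) difference is in the algebraic case, where the paper works inside the finite Galois splitting field of the minimal polynomial and quotes Galois theory, while you work inside the algebraic closure of $k$ in $K$ and quote the isomorphism extension theorem for algebraic closures.
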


\begin{proof}
Let $c \in K\setminus k$.  Thanks to Theorem \ref{Conrad319}, in order to prove that $c$ is {\sl not}  $\Aut(K/k)$-invariant,
it suffices to find an intermediate field $F$ such that
$$k \subset F \subset K, \quad c \in F,$$
and an automorphism $\sigma \in \Aut(F/k)$ such that 
$$\sigma(c) \ne c.$$

If $c$ is an algebraic over $k$ then it is a root of a certain monic irreducible polynomial $f(t) \in k[t]$. Let $F\subset K$ be the splitting field of $f(t)$ over $k$. Then 
$c \in F$ and $F/k$ 
is a finite {\sl Galois extension}. By Galois theory, 
$$F^{\Aut(F/k)}=k.$$
  Since $c \not\in k$,  there exists an automorphism $\sigma \in \Aut(F/k)$ such that 
$\sigma(c) \ne c$.

If $c$ is transcendental over $k$ then  the subfield $k(c)$ of $K$ generated by $c$ over $k$ is canonically isomorphic to the field $k(t)$ of rational  functions in an independent variable $t$
over the field of constants $k$.  Namely, there is a $k$-linear field isomorphism
$$\mathrm{ev}_c: k(t) \to k(c), \  \frac{u(t)}{v(t)} \mapsto  \frac{u(c)}{v(c)}$$
for any polynomials $u(t),v(t) \in k[t]$ with $v(t) \ne 0$. For example, $\mathrm{ev}_c$ sends the rational functions $t=\frac{t}{1}$ and 
 $t+1=\frac{t+1}{1}$ to $c$ and $c+1$ respectively.
 
 Now let us put
$F=k(c)$ and define its $k$-linear automorphism $\sigma$ by
$$\sigma: \frac{u(c)}{v(c)} \mapsto \frac{u(c+1)}{v(c+1)}.$$
Then $\sigma(c)=c+1 \ne c$.

This ends the proof.

\end{proof}

Recall that $K$ is {\sl algebraically closed} (e.g., $K$ is an algebraic closure of $k$).
Let $\mathfrak{g}$ be a nonzero semisimple finite-dimensional Lie algebra over $k$  of rank $l$ and consider the corresponding 
semisimple finite-dimensional Lie algebra 
$$\bar{\mathfrak{g}}:=\mathfrak{g}\otimes_k K$$
over $K$. If $\mathfrak{h}$ is a Cartan subalgebra of $\mathfrak{g}$ then 
$$\dim_k(\mathfrak{h})=l.$$
We write
$$\bar{\mathfrak{h}}:=\mathfrak{h}\otimes_k K\subset \mathfrak{g}\otimes_k K=\bar{\mathfrak{g}}$$
for the corresponding Cartan subalgebra of $\bar{\mathfrak{g}}$; we have
$$\dim_K(\bar{\mathfrak{h}})=l.$$
As usual, let us consider the dual $K$-vector space
$$\bar{\mathfrak{h}}^{*}:=\Hom_K(\bar{\mathfrak{h}},K)$$
of $K$-dimension $l$ endowed by the action of $\Aut(K/k)$  defined by the formula
$$\sigma \mapsto \{\phi: \mapsto \sigma\circ \phi \circ \sigma^{-1} \} \quad \forall \phi: \bar{\mathfrak{h}} \to K$$
and $\sigma \in \Aut(K/k)$.
As above, the restriction map
$$\mathrm{res}_{K,k}: \bar{\mathfrak{h}}^{*}:=\Hom_K(\bar{\mathfrak{h}},K) \to \Hom_k(\mathfrak{h},K)$$
is an isomorphism of $K$-vector spaces. (Here as above we identify $\mathfrak{h}$ with
$$\mathfrak{h}\otimes 1\subset \mathfrak{h}\otimes_k K=\bar{\mathfrak{h}}.)$$
The inverse map
$$\mathrm{res}_{K,k}^{-1}:\Hom_k(\mathfrak{h},K) \to  \Hom_K(\bar{\mathfrak{h}},K)=\Hom_K(\mathfrak{h}\otimes_k K,K),$$
is described explicitly by the formula
$$\mu \mapsto \{h \otimes a \mapsto a\cdot \mu(h)\} \quad \forall h \in \mathfrak{h}, a \in K.$$

Let $R\subset \bar{\mathfrak{h}}^{*}$ be the {\sl root system} of $(\bar{\mathfrak{g}},\bar{\mathfrak{h}})$
 \cite{Bourbaki78}. By definition, $R$ consists of all {\sl nonzero} $\alpha \in \bar{\mathfrak{h}}^{*}$ such that
 $$\bar{\mathfrak{g}}_{\alpha}:=\{x \in \bar{\mathfrak{g}}\mid [H,x]=\alpha(H)x \ \forall H \in 
 \bar{\mathfrak{h}}\} \ne \{0\}.$$
 Clearly,
 $$\bar{\mathfrak{g}}_{\alpha}=\{x \in \bar{\mathfrak{g}}\mid [H,x]=\alpha(H)x \ \forall H \in 
 \mathfrak{h}\}$$
 and therefore
 $$\sigma(\bar{\mathfrak{g}}_{\alpha})=\bar{\mathfrak{g}}_{\sigma(\alpha)} \quad \forall \sigma \in \Aut(K/k).$$
  It follows  that the subset $R$ of $\bar{\mathfrak{h}}^{*}$
 is $\Aut(K/k)$-invariant.
We write 
$$\mathrm{W}(R)\subset \Aut_K(\bar{\mathfrak{h}}^{*})$$ for the {\sl Weyl group} of the root system $R$. Notice that $\mathrm{W}(R)$ permutes elements of $R$.

Let us choose a {\sl basis} (a {\sl simple root system}) $B$ of $R$. The $l$-element  set $B$ is a basis of the $K$-vector space $\bar{\mathfrak{h}}^{*}$.
Every root $\alpha \in R$ is a linear combination of elements of $B$ with integer coefficients; in addition, the nonzero coefficients are either all positive
or all negative. (Actually, these properties characterize a {\sl basis} of $R$.) This implies the equality of abelian subgroups
\begin{equation}
\label{RlB}
\BZ\cdot R:=\sum_{\alpha\in R}\BZ\cdot \alpha=\sum_{\beta\in B}\BZ\cdot \beta;
\end{equation}
$\BZ\cdot R$ is a free abelian group of rank $l$ that is a $\mathrm{W}(R)$-invariant subgroup of $\bar{\mathfrak{h}}^{*}$.

The set $B$ does {\sl not} have to be $\Aut(K/k)$-invariant.  
However, if $\sigma \in \Aut(K/k)$ then 
$\sigma(B)$ is a basis of $R$ as well.  
Since the Weyl group $\mathrm{W}(R)$ acts transitively on the set of all simple root systems of $R$, there is $w_{\sigma}\in \mathrm{W}(R)$ such that
$$w_{\sigma}(\sigma(B))=B$$
(compare with \cite[p. 203]{Tits}). In particular,  
$$s_{\sigma}:=w_{\sigma}\circ \sigma \in \Aut_K(\bar{\mathfrak{h}}^{*})$$
permutes elements of $B$. It is also clear that $s_{\sigma}$ permutes elements of $R$. Hence,
$\BZ\cdot R$ is $s_{\sigma}$-invariant.
\begin{sect}
\label{highest}
Throughout this subsection we use the notation and constructions of  Subsection \ref{LieMod} applied to
$$\mathfrak{u}=\mathfrak{g},  \quad \bar{\mathfrak{u}}=\bar{\mathfrak{g}}.$$
Let $\mathcal{V}$ be a nonzero finite-dimensional vector space over $K$ endowed by the homomorphism of  Lie $K$-algebras
\begin{equation}
\label{rhoK}
\bar{\mathfrak{g}} \to  \End_K(\mathcal{V}),
\end{equation}
which may be viewed (in the notation of Subsection \ref{LieMod}) as $\bar{\rho}$ where
$$\rho: \mathfrak{g} \to \End_K(\mathcal{V})$$
is the restriction of the homomorphism \eqref{rhoK} to $\mathfrak{g}=\mathfrak{g}\otimes 1$.
The homomorphism $\bar{\rho}$ that appeared in \eqref{rhoK} provides $\mathcal{V}$ with the structure of a $\bar{\mathfrak{g}}$-module.
Let us assume that this module is {\sl simple}.

Let us consider the set
$$\mathrm{Supp}(\mathcal{V})\subset \bar{\mathfrak{h}}^{*}$$
 of weights of the  $\bar{\mathfrak{g}}$-module $\mathcal{V}$, i.e.,
$\mu \in \bar{\mathfrak{h}}^{*}$ lies in $\mathrm{Supp}(\mathcal{V})$ if and only if the {\sl weight subspace}
$$\mathcal{V}_{\mu}:=\{v \in \mathcal{V}\mid \rho(H)(x)=\mu(H)v \quad \forall H \in \bar{\mathfrak{h}}\} \ne \{0\}.$$
Then (see \cite{Bourbaki78})
\begin{equation}
\label{SuppQR}
\mathrm{Supp}(\mathcal{V})\subset \BQ\cdot R:=\sum_{\alpha\in R}\BQ\cdot \alpha=:\sum_{\beta\in B}\BQ\cdot \beta \subset \bar{\mathfrak{h}}^{*},
\end{equation}
and  there exists the {\sl highest weigh}t $\lambda$ of the $\bar{\mathfrak{g}}$-module $\mathcal{V}$ that enjoys the following properties.

\begin{itemize}
\item[(i)] $\lambda \in \mathrm{Supp}(\mathcal{V})$.

\item[(ii)]
If $\mu \in \mathrm{Supp}(\mathcal{V})$ then $\lambda-\mu$ is a linear combination of elements of $B$ with nonnegative integer coefficients.
\end{itemize}
\begin{rem}
It is well known that:
\begin{itemize}
\item[(i)]
$$\mathrm{Supp}(\mathcal{V})\subset \sum_{\beta\in B}\BQ\cdot \beta=\BQ\cdot R.$$
\item[(ii)]
The subset $\mathrm{Supp}(\mathcal{V})$ is $\mathrm{W}(R)$-invariant.
\end{itemize}
\end{rem}

\begin{rem}
\label{BRQ}
It follows from the $\mathrm{W}(R)$-invariance of $\BZ\cdot R$ (see \eqref{RlB}) that the $l$-dimensional $\BQ$-vector (sub)space 
$\BQ\cdot R$ is $\mathrm{W}(R)$-invariant.
\end{rem}
\end{sect}
Recall (Subsection \ref{LieMod})  that one may attach to each $\sigma \in \Aut(K/k)$ the  homomorphism of Lie $K$-algebras 
$$\overline{^{\sigma}\rho}: \bar{\mathfrak{g}} \to  \End_K({^{\sigma}\mathcal{V}}),$$
and the corresponding  $\bar{\mathfrak{g}}$-module ${^{\sigma}\mathcal{V}}$ is simple.
\begin{thm}
\label{mainLie}
Suppose that  $\lambda$ is the dominant weight of a simple $\bar{\mathfrak{g}}$-module $\mathcal{V}$ of finite dimension.
If $\sigma \in \Aut(K/k)$ then $s_{\sigma^{-1}}(\lambda)$ is  the dominant weight of the simple $\bar{\mathfrak{g}}$-module ${^{\sigma}\mathcal{V}}$.
\end{thm}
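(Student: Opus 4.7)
The plan is to describe explicitly the weights of the simple $\bar{\mathfrak{g}}$-module $^{\sigma}\mathcal{V}$ with respect to $\bar{\mathfrak{h}}$ and then to single out the one that is dominant with respect to the basis $B$. First I would unpack the twisted Cartan action: starting from $^{\sigma}\rho={^{\sigma}\mathrm{id}}\circ\rho$ and using the semi-linearity relation ${^{\sigma}\mathrm{id}}(ax)=\sigma(a)\cdot{^{\sigma}\mathrm{id}}(x)$ from Lemma \ref{EndVsigma}(i), a direct check gives that the $K$-linear extension $\overline{^{\sigma}\rho}$ agrees, as an additive endomorphism of the underlying set $\mathcal{V}={^{\sigma}\mathcal{V}}$, with the pullback of $\bar{\rho}$ along the semi-linear automorphism $1_{\mathfrak{h}}\otimes\sigma^{-1}$ of $\bar{\mathfrak{h}}$; explicitly, $\overline{^{\sigma}\rho}(h\otimes a)=\bar{\rho}(h\otimes\sigma^{-1}(a))$ for $h\in\mathfrak{h}$ and $a\in K$.

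Next, for $v\in\mathcal{V}_{\mu}$ viewed in $^{\sigma}\mathcal{V}$, this formula gives $\overline{^{\sigma}\rho}(H)(v)=\mu((1\otimes\sigma^{-1})(H))\cdot v$, with scalar multiplication taken in $\mathcal{V}$. Re-expressing the right-hand side via the twisted multiplication $a\cdot_{\sigma}w=\sigma^{-1}(a)w$ of Definition \ref{Vsigma} and matching against the $\Aut(K/k)$-action on $\bar{\mathfrak{h}}^{*}$ from Remark \ref{dualKk} identifies $v$ as a weight vector in $^{\sigma}\mathcal{V}$ with weight $\sigma^{-1}(\mu)$; hence $\mathrm{Supp}({^{\sigma}\mathcal{V}})=\sigma^{-1}(\mathrm{Supp}(\mathcal{V}))$.

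To detect which element of this weight set is dominant with respect to $B$, I would use that $\mathrm{Supp}(\mathcal{V})$ is $\mathrm{W}(R)$-invariant and that $\Aut(K/k)$ permutes $R$ and hence normalizes $\mathrm{W}(R)$; consequently $\sigma^{-1}(\mathrm{Supp}(\mathcal{V}))$ is itself $\mathrm{W}(R)$-invariant. In particular, applying the Weyl element $w_{\sigma^{-1}}\in\mathrm{W}(R)$ (which by definition sends $\sigma^{-1}(B)$ to $B$) leaves the weight set unchanged, so that
$$\mathrm{Supp}({^{\sigma}\mathcal{V}})=w_{\sigma^{-1}}\sigma^{-1}\bigl(\mathrm{Supp}(\mathcal{V})\bigr)=s_{\sigma^{-1}}\bigl(\mathrm{Supp}(\mathcal{V})\bigr).$$
Since $s_{\sigma^{-1}}$ permutes $B$ and is $\mathbb{Z}$-linear on the root lattice $\mathbb{Z}\cdot R$, it preserves the monoid $\sum_{\beta\in B}\mathbb{Z}_{\geq 0}\beta$; applying it to the inequality $\lambda-\mu\in\sum\mathbb{Z}_{\geq 0}\beta$ characterizing the highest weight of $\mathcal{V}$ yields $s_{\sigma^{-1}}(\lambda)-s_{\sigma^{-1}}(\mu)\in\sum\mathbb{Z}_{\geq 0}\beta$ for every weight $s_{\sigma^{-1}}(\mu)$ of $^{\sigma}\mathcal{V}$. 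Uniqueness of the highest weight of a finite-dimensional simple $\bar{\mathfrak{g}}$-module (together with simplicity of $^{\sigma}\mathcal{V}$ by Remark \ref{invarW}) then forces $s_{\sigma^{-1}}(\lambda)$ to be the dominant weight of $^{\sigma}\mathcal{V}$.

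The delicate part is the weight identification in the second paragraph. Two independent semi-linear twists enter the setup — the twisted scalar multiplication $a\cdot_{\sigma}w=\sigma^{-1}(a)w$ on $^{\sigma}\mathcal{V}$ contributes one factor of $\sigma^{-1}$, and the relation ${^{\sigma}\mathrm{id}}(ax)=\sigma(a)\cdot{^{\sigma}\mathrm{id}}(x)$ of Lemma \ref{EndVsigma}(i) contributes another — and reconciling them against the convention $\phi\mapsto\sigma\circ\phi\circ\sigma^{-1}$ for the action on $\bar{\mathfrak{h}}^{*}$ is precisely what forces $\sigma^{-1}$ rather than $\sigma$ to act on $\mu$, and therefore $s_{\sigma^{-1}}$ rather than $s_{\sigma}$ to appear in the final answer. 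Once this direction is nailed down, the Weyl-group renormalization and the dominance check are formal.
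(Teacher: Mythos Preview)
Your overall strategy is exactly the paper's: compute $\mathrm{Supp}({^{\sigma}\mathcal{V}})$ from $\mathrm{Supp}(\mathcal{V})$, use $\mathrm{W}(R)$-invariance to replace the Galois twist by the $B$-preserving map $s_{\bullet}$, and then read off the highest weight. The only substantive difference is the direction of the automorphism in the weight identification---and here you slip.

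From your own correct formula $\overline{{}^{\sigma}\rho}(H)(v)=\mu\bigl((1\otimes\sigma^{-1})(H)\bigr)\cdot v$ (product in $\mathcal{V}$), rewriting the scalar in the ${}^{\sigma}\mathcal{V}$-multiplication gives
\[
\overline{{}^{\sigma}\rho}(H)(v)=\sigma\bigl(\mu(\sigma^{-1}H)\bigr)\cdot_{\sigma} v,
\]
so the weight $\nu$ of $v$ in ${}^{\sigma}\mathcal{V}$ satisfies $\nu(H)=\sigma\bigl(\mu(\sigma^{-1}H)\bigr)=(\sigma\circ\mu\circ\sigma^{-1})(H)$. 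Under the convention $\tau\cdot\phi=\tau\circ\phi\circ\tau^{-1}$ of Remark~\ref{dualKk} this is $\nu=\sigma(\mu)$, \emph{not} $\sigma^{-1}(\mu)$. Hence $\mathrm{Supp}({}^{\sigma}\mathcal{V})=\sigma\bigl(\mathrm{Supp}(\mathcal{V})\bigr)$ and, after the Weyl renormalization, the highest weight is $s_{\sigma}(\lambda)$ rather than $s_{\sigma^{-1}}(\lambda)$. Your ``two twists, each contributing a $\sigma^{-1}$'' heuristic is precisely what went wrong: the twist on the scalar multiplication in ${}^{\sigma}\mathcal{V}$ contributes a $\sigma$, not a $\sigma^{-1}$, when you pass back to the ${}^{\sigma}\mathcal{V}$-structure.

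This is in fact what the paper's own proof computes (it concludes with $s_{\sigma}(\lambda)$), so the $\sigma^{-1}$ in the theorem statement is a typo in the paper. For the only application (Theorem~\ref{SlSoSp}) the discrepancy is harmless, since all that is used is that the highest weight of ${}^{\sigma}\mathcal{V}$ is obtained from $\lambda$ by a permutation of $B$, and the relevant weights $\bar{\omega}_{m+i}+\bar{\omega}_{m-i}$ are fixed by every such permutation; but as a stand-alone proof of the statement as written, your computation needs to be corrected.
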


\begin{proof}
First, notice that
$$\mathrm{Supp}({^{\sigma}\mathcal{V}})=\sigma(\mathrm{Supp}(\mathcal{V})).$$
Indeed, for any $$H \in \mathfrak{h}\subset \bar{\mathfrak{h}},$$
 the spectrum of the diagonalizable operator $\rho(H)$ in $\mathcal{V}$
is the collection 
$$\{\mu(H)\mid \mu \in \mathrm{Supp}(\mathcal{V})\} \ \text{ (with multiplicities)}.$$ In light of  
Lemma \ref{EndVsigma},
the spectrum of the  diagonalizable operator   ${^{\sigma}\mathrm{id}}\circ \rho(H)$ in ${^{\sigma}\mathcal{V}}$
is the collection 
$$\{\sigma(\mu(H))\mid \mu \in \mathrm{Supp}(\mathcal{V})\} \ \text{(with multiplicities)}.$$
More precisely, let $n=\dim(\mathcal{V})$ and $\{e_1, \dots,  e_n\}$ be a common (weight) eigenbasis of all elements of $\bar{\mathfrak{h}}$ in $\mathcal{V}$, i.e.,
for each index $i \in \{1, \dots, n\}$ there is a weight 
$$\mu_i \in \mathrm{Supp}(\mathcal{V})\in \bar{\mathfrak{h}}^{*}$$
such that
$$\rho(H)(e_i)=\mu_i(H)e_i \ \forall H \in \bar{\mathfrak{h}}.$$
(Clearly, the collection $\{\mu_1, \dots, \mu_n\}$ coincides with $\mathrm{Supp}(\mathcal{V})$.)
In light of  Remark \ref{sigmaM},
$\{e_1, \dots,  e_n\}$ is a basis of ${^{\sigma}\mathcal{V}}$, and 
if $H \in \mathfrak{h}$, then $H=\sigma^{-1}H$ and
\begin{equation}
\label{weightSigma}
{^{\sigma}\mathrm{id}}\circ \rho(H)(e_i)=\sigma(\mu_i(H)) e_i=\sigma(\mu_i(\sigma^{-1}H))e_i =(\sigma(\mu_i))(H)e_i.
\end{equation}
Since $\bar{\mathfrak{h}}=\mathfrak{h}\otimes_k K$, we conclude that
\begin{equation}
\label{weightSigmaBar}
{\overline{^{\sigma}\rho}}(H)(e_i)=(\sigma(\mu_i))(H)e_i \ \forall H \in \bar{\mathfrak{h}}.
\end{equation}
In other words,
$$\mathrm{Supp}({^{\sigma}\mathcal{V}})=\{\sigma\circ \mu_i \mid i=1, \dots, n\}=\{\sigma\circ \mu\mid \mu \in \mathrm{Supp}(\mathcal{V})\}.$$

Second, the $\mathrm{W}(R)$-invariance of $\mathrm{Supp}({^{\sigma}\mathcal{V}})$  implies that
$$\mathrm{Supp}({^{\sigma}\mathcal{V}})=w_{\sigma}\circ \sigma(\mathrm{Supp}(\mathcal{V}))=(w_{\sigma}\circ \sigma)(\mathrm{Supp}(\mathcal{V}))
=s_{\sigma}(\mathrm{Supp}(\mathcal{V})).$$
It follows that $\mathrm{Supp}({^{\sigma}\mathcal{V}})$ contains $s_{\sigma}(\lambda)$, and all the other weights in 
$\mathrm{Supp}({^{\sigma}\mathcal{V}})$ are of the form $s_{\sigma}(\mu)$ where $\lambda-\mu$ is 
is a linear combination of elements of $B$ with nonnegative integer coefficients. Since $s_{\sigma}$ permutes elements of $B$, the difference
$s_{\sigma}(\lambda)-s_{\sigma}(\mu)$ is also a linear combination of elements of $B$ with nonnegative integer coefficients. It follows that
$s_{\sigma}(\lambda)$ is  the dominant weight of the simple $\mathfrak{g}$-module ${^{\sigma}\mathcal{V}}$.

\end{proof}

\section{$2$-Simple complex tori without nontrivial endomorphisms}
\label{noEnd}

\begin{thm}
\label{SlSoSp}
Suppose that $T$ is a $2$-simple complex torus of dimension $g\ge 3$ with $\End^0(T)=\BQ$.  Assume also
that $g \ne 10$ and $2g$ is not a power (e.g., $g$ is odd) .
 Then $\mathrm{Hdg}(T)$ enjoys one of the following properties.

\begin{itemize}
\item[(i)]
$\mathrm{Hdg}(T)=\SL(\Lambda_{\BQ})$;

\item[(ii)] There exists a nondegenerate  symmetric $\BQ$-bilinear form
$$\Lambda_{\BQ}\times \Lambda_{\BQ} \to \BQ$$
such that $\mathrm{Hdg}(T)$ coincides with the corresponding special orthoginal group
$\mathrm{SO}(\Lambda_{\BQ})$.
\item[(iii)] There exists a nondegenerate alternating $\BQ$-bilinear form
$$\Lambda_{\BQ}\times \Lambda_{\BQ} \to \BQ$$
such that $\mathrm{Hdg}(T)$ coincides with the corresponding symplectic group
$\mathrm{Sp}(\Lambda_{\BQ})$.
\end{itemize}
\end{thm}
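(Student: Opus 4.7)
The plan combines the structural results of Sections \ref{HDG}--\ref{noEnd} with an eigenvalue analysis of the complex structure on an exterior power and a plethysm computation. First, $\End^0(T)=\BQ$ together with Lemma \ref{center}, Theorem \ref{BorT}, and Corollary \ref{BorTabs} (invoking the hypothesis that $2g$ is not a power) makes $\mathrm{Hdg}(T)$ absolutely $\BQ$-simple. By Theorem \ref{deligneT}, the $\mathrm{hdg}_{T,\BC}$-module $\Lambda_\BC$ is minuscule, so a case-analysis of classical types of dimension $2g$ leaves exactly four possibilities: (a) $\mathsf{A}_{2g-1}$ on the standard module; (b) $\mathsf{C}_g$ symplectic; (c) $\mathsf{D}_g$ orthogonal; (d) $\mathsf{A}_r$ with $\Lambda_\BC\cong\wedge^j W_\BC$, $W_\BC$ the $(r+1)$-dimensional standard module, $2g=\binom{r+1}{j}$, $2\le j\le r-1$. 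In cases (a), (b), (c), the fact that $\Lambda_\BQ$ is absolutely irreducible with centralizer $\End^0(T)=\BQ$, together with the uniqueness up to scalar of the invariant alternating or symmetric form, pins $\mathrm{Hdg}(T)$ down as $\SL(\Lambda_\BQ)$, $\Sp(\Lambda_\BQ,\omega)$, or $\mathrm{SO}(\Lambda_\BQ,q)$, giving conclusions (i), (iii), (ii) of the theorem.

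It remains to exclude case (d). Put $n=r+1$ and let $\tilde J\in \mathrm{hdg}_{T,\BR}\subset \mathfrak{sl}(W_\BR)$ be the element whose derivation extension on $\Lambda_\BR=\wedge^j W_\BR$ is the complex structure $J$. The $\BC$-eigenvalues $\mu_1,\dots,\mu_n$ of $\tilde J$ on $W_\BC$ sum to $0$, and every $j$-fold sum $\mu_{i_1}+\cdots+\mu_{i_j}$ must lie in $\{+\ib,-\ib\}$. Comparing two such sums that differ in a single index shows $\mu_l-\mu_k\in\{0,\pm 2\ib\}$, so the $\mu_k$ take at most two values $\{\alpha,\alpha+2\ib\}$ with multiplicities $m$ and $n-m$. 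The range $[\max(0,j-(n-m)),\min(j,m)]$ of admissible counts of $\alpha$'s in a $j$-fold sum must then have at most two elements, and a short case analysis (combined with $\sum\mu_k=0$) forces $m\in\{1,n-1\}$ and $n=2j$.

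With $n=2j$, invoke the classical plethysm identity
\begin{equation*}
\wedge^2(\wedge^j W)\ =\ \bigoplus_{\substack{1\le i\le j\\ i\ \mathrm{odd}}} S^{(2^{j-i},1^{2i})}W.
\end{equation*}
Each summand has a distinct $\mathfrak{sl}_{2j}$-highest weight $\omega_{j-i}+\omega_{j+i}$, symmetric under the diagram involution $\omega_k\mapsto\omega_{2j-k}$ and therefore Galois-invariant for any $\BQ$-form of type $\mathsf{A}_{2j-1}$; by Lemma \ref{subspace} the decomposition descends to $\BQ$ as pairwise non-isomorphic simple $\mathrm{Hdg}(T)$-submodules of $\wedge^2\Lambda_\BQ$. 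The summand $S^{(2^{j-i},1^{2i})}\BC^{2j}$ is $\SL_{2j}$-trivial iff $i=j$ (in which case it is $\wedge^{2j}\BC^{2j}=\det$), so there are $(j-1)/2$ non-trivial simple summands when $j$ is odd and $j/2$ when $j$ is even. Remark \ref{translation}(ii) converts 2-simplicity into the requirement that $\wedge^2\Lambda_\BQ$ carry at most one non-trivial simple $\mathrm{Hdg}(T)$-summand, forcing $j\in\{2,3\}$. The case $j=2$ yields $g=3$ and, via the exceptional isomorphism $\mathsf{A}_3=\mathsf{D}_3$ (under which $\SL_4$ on $\wedge^2\BC^4$ becomes $\mathrm{SO}_6$ on its standard representation), reduces to option (ii); the case $j=3$ yields $g=10$, excluded by hypothesis. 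I expect the main obstacle to be the coupling of the plethysm calculation with the $\BQ$-descent via Lemma \ref{subspace}, so that the 2-simplicity criterion can be applied component-by-component.
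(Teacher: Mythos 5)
Your proposal is correct and follows essentially the same route as the paper: the reduction via Corollary \ref{BorTabs} and Theorem \ref{deligneT} to classical minuscule representations, the eigenvalue analysis of the complex structure on $\wedge^j W$ forcing two eigenvalues with multiplicities $1$ and $n-1$ and $n=2j$, the plethysm decomposition into $\mathbf{V}(\bar\omega_{j-i}+\bar\omega_{j+i})$, the descent to $\BQ$ via invariance of these weights under the diagram involution (Theorem \ref{mainLie}, Proposition \ref{sigmaW}, Lemma \ref{subspace}), and the count of nontrivial summands forcing $j\in\{2,3\}$. The only (harmless) deviations are organizational: you treat $j=2$ uniformly inside the plethysm count and then invoke $\mathsf{A}_3=\mathsf{D}_3$, where the paper disposes of $l\le 3$ separately before the eigenvalue argument, and you extract $n=2j$ from the trace-zero condition rather than from equality of the multiplicities of $\pm\mathbf{i}$.
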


\begin{proof}

It follows from  Corollary \ref{BorTabs} that 
$\mathrm{hdg}_{T,\BC}$ is a complex {\sl simple} classical Lie algebra, whose natural  faithful representation in $\Lambda_{\BC}$ has a minuscule weight as the highest weight, thanks to Theorem \ref{deligneT}.  Since $2g=\dim_{\BC}(\Lambda_{\BC})$ is not a power of $2$, one should exclude the cases when either $\mathrm{hdg}_{T,\BC}$ is of type ${\sf{B}}_l$, or $\mathrm{hdg}_{T,\BC}$ is of type ${\sf{D}}_l$ and $\Lambda_{\BC}$ is one of its two semi-spinorial representations. Let us list the remaining cases.

\begin{itemize}
\item[(i)]
$\mathrm{hdg}_{T,\BC}$ is of type ${\sf{C}}_g$  or ${\sf{D}}_g$, and there is a nondegenerate alternating or symmetric bilinear form on $\Lambda_{\BC}$ such that $\mathrm{hdg}_{T,\BC}$ coincides with the corresponding symplectic Lie algebra $\mathfrak{sp}(\Lambda_{\BC})$ or the corresponding orthogonal Lie algebra $\mathfrak{so}(\Lambda_{\BC})$.
\item[(ii)]
$\mathrm{hdg}_{T,\BC}$ is of type ${\sf{A}}_l$, i.e., $\mathrm{hdg}_{T,\BC}$ may be identified with the Lie algebra $\mathfrak{sl}(W)$ of a $(l+1)$-dimensional complex vector space $W$ in such a way that the $\mathfrak{sl}(W)$-module $\Lambda_{\BC}$ is isomorphic to the $j$th exterior power
$\wedge_{\BC}^j(W)$ of $W$ for some integer $j$ with $1 \le j \le l$.  We may  assume that $1<j<l$.  
\end{itemize}

Let us handle the case (i).  In this situation the $\mathrm{hdg}_{T,\BC}$-module $\Lambda_{\BC}$ is self-dual, which implies that there is a non-zero homomorphism
between the $\mathrm{hdg}_{T,\BC}$-module $\Lambda_{\BC}$ and its dual. This, in turn, implies that there is a non-zero homomorphism
between the $\mathrm{hdg}_{T,\BQ}$-module $\Lambda_{\BQ}$ and its dual. Now the  simplicity of the  $\mathrm{hdg}_{T}$-module $\Lambda_{\BQ}$ implies
that $\Lambda_{\BQ}$ and its dual are isomorphic, i.e., there is a nondegenerate $\mathrm{hdg}_{T}$-invariant bilinear form
$$\Lambda_{\BQ} \times \Lambda_{\BQ} \to \BQ.$$
The absolutely simplicity  of the $\mathrm{hdg}_{T,\BQ}$-module $\Lambda_{\BQ}$ implies that this form is unique (up to multiplication by a non-zero rational number)
and, therefore, is 
 alternating if $\mathrm{hdg}_{T}$ is of type $\sf{C}_g$ or symmetric if $\mathrm{hdg}_{T}$ is of type $\sf{D}_g$. Now the dimension arguments imply that
$\mathrm{hdg}_{T}=\mathfrak{sp}(\Lambda_{\BC})$ in the former case and   $\mathrm{hdg}_{T}=\mathrm{so}(\Lambda_{\BC})$ in the latter case.

Let us handle the case (ii).    
We know that $T=V/\Lambda$ where the $\mathrm{hdg}_{T,\BC}=\sll(W)$-module $V_{\BC}$ is isomorphic to 
$\wedge_{\BC}^j(W)$. 

If $l=1$ then the inequality  $1<j<l=1$ implies that this case does not occur.

If $l=2$ then the inequality  $1<j<l=2$ implies that $j=1$ and $V_{\BC}$ is isomorphic to $W$, which is a $3$-dimensional  complex vector space.
Since $3$ is an odd integer and the $\BC$-dimension of  $V_{\BC}$ is even, this case also does not occur.

If $l=3$ then the inequality  $1<j<l=3$ implies that $j=2$ and $V_{\BC}$ is isomorphic to $\wedge^2_{\BC}W$ where $W$ is a $4$-dimensional complex vector space
and $\wedge^2_{\BC}W$ is an irreducible  $6$-dimensional orthogonal representation of the Lie algebra $\sll(W)$.  This implies that  the representation
of $\mathrm{hdg}_{T,\BC}$ in the $6$-dimensional complex vector space $V_{\BC}$ is orthogonal and irreducible. It follows that 
$$\dim_{\BQ}(\Lambda_{\BQ})=\dim_{\BC}(W)=6$$ and 
$\mathrm{hdg}_{T}$ is a $\BQ$-Lie subalgebra $\so(\Lambda_{\BQ})$ of the corresponding  special orthogonal group $\mathrm{SO}(\Lambda_{\BQ})$.
It follows  that
the representation
of $\mathrm{hdg}_{T}$ in the $6$-dimensional $\BQ$-vector space $\Lambda_{\BQ}$ is orthogonal and irreducible, and therefore 
$$\dim_{\BQ}(\mathrm{hdg}_{T}) \le \dim_{\BQ}(\so(\Lambda_{\BQ})=15.$$
However,
$$\dim_{\BQ}(\mathrm{hdg}_{T}) =\dim_{\BC}(\mathrm{hdg}_{T,\BC}) =\dim_{\BC}(\sll(W))=15.$$
This implies that $\mathrm{hdg}_{T}$ coincides with $\so(\Lambda_{\BQ})$, i.e., $\mathrm{Hdg}_{T}$
coincides with  $\mathrm{SO}(\Lambda_{\BQ})$.

So, we may and will assume that $l>3$.
Then there is an element $u\in \sll(W)$ that acts on $V_{\BC}$ as $J$.  Since $J$ is a nonzero  semisimple linear operator in $V_{\BC}$,  the element $u$ is also a semisimple (i.e., diagonalizable) 
nonzero linear operator
in $W$.  Let $\{e_1, \dots, e_{l+1}\}$ be an eigenbasis of $W$ and   $\{z_1, \dots, z_{l+1}\}\subset \BC$ be the corresponding eigenvalues of $u$, i.e,
$$u( e_{i})=z_{i}e_{i} \ i=1, \dots, l+1$$
and the trace of $u$ is
$$\sum_{i=1}^{l+1} z_{i}=0.$$
This implies that $u$ has at least two distinct eigenvalues.

 Then the collections of eigenvalues of $J$ in $V_{\BC}\cong \wedge^j_{\BC}(W)$ listed with
multiplicities coincides with 
$$\{t_A:=\sum_{i\in A}z_{i}\}_A$$
 where $A$ runs through all $j$-element subsets $A$ of $\{1, \dots, l+1\}$. On the other hand, we know that
the spectrum of $J$ in $V_{\BC}$ consists of two eigenvalues $\mathbf{i}$ and $-\mathbf{i}$, whose multiplicities coincide. It follows almost immediately that 
$u$ has precisely two (distinct) eigenvalues, say, $\mathbf{a}$ and $\mathbf{b}$, and none of them is $0$. 
Indeed, suppose that  the spectrum of $u$ contains (at least) three eigenvalues, say, $\mathbf{a},\mathbf{b},\mathbf{c}$. 

Reordering the eigenbasis if necessary, we may assume that
$$z_1=\mathbf{a}, \ z_2=\mathbf{b}, \ z_3=\mathbf{c}.$$
Let $B$ be any $(j-2)$-element subset of $\{4, \dots, l+1\}$
Let us consider three  distinct $j$-element subsets
$$A_1=\{2,3\}\cup B, \ A_2=\{1,3\}\cup B, \ A_3=\{1,2\}\cup B$$
of $\{1, \dots, l+1\}$. If we put 
$$C:=\{1,2,3\}\cup B\subset \{1, \dots, l+1\}, \quad c:=\sum_{i\in C}z_{i}\in \BC$$
then we get  three distinct eigenvalues
$$t_{A_1}=c-\mathbf{a}, \ t_{A_2}=c-\mathbf{b}, \ t_{A_3}=c-\mathbf{c}$$
of $J$, which do not exist. This proves that the spectrum of $u$ consists of precisely two eigenvalues, say,
$\mathbf{a}, \mathbf{b} \in \BC$. Since the trace of {\sl nonzero} $u$ is $0$, both $\mathbf{a}$ and $\mathbf{b}$ are {\sl not} zero.

Let $p$ be the multiplicity of the eigenvalue $\mathbf{a}$ and $q$ the multiplicity of the eigenvalue $\mathbf{b}$.
Both $p$ and $q$ are positive integers, whose sum 
$$p+q=l+1>3+1=4.$$
 Since $u$ is traceless,
$$p \mathbf{a}+q \mathbf{b}=0.$$
I claim that either $p=1$ or $q=1$. Indeed, suppose that 
$$p\ge 2, \ q\ge 2.$$
Since $p+q>4$, we may assume that $p \ge 3$.
Notice also that all three complex numbers
$$2\mathbf{a}, \ 2\mathbf{b},\  \mathbf{a}+\mathbf{b}$$
are distinct.
Reordering the eigenbasis if necessary, we may assume that
$$z_1=\mathbf{a}, \ z_2=\mathbf{a}, z_3=\mathbf{a}, \ z_l=\mathbf{b}, \ z_{l+1}=\mathbf{b}$$
(recall that $l+1>4$). 

  Let $\mathbf{B}$ be a $(j-2)$-element subset of the $(l-3)$-element subset  $\{3,4, \dots, l-1\}$ and $b:=\sum_{i\in \mathbf{B}} z_{i}$.
  Let us consider three  distinct $j$-element subsets
$$A_1=\{1,2\}\cup B, \ A_2=\{l,l+1\}\cup B, \ A_3=\{1,l\}\cup B$$
of $\{1, \dots, l+1\}$. 
Then we get  three {\sl distinct} eigenvalues
$$t_{A_1}=b+2\mathbf{a}, \ t_{A_2}=b+2\mathbf{b}, \ t_{A_3}=b+(\mathbf{a}+\mathbf{b})$$
of $J$, which could not be the case. The obtained contradiction proves that either $p=1$ or $q=1$.

Without loss of generality we may assume that $p=1$. Reordering the eigenbasis if necessary, we may assume that
$z_1=\mathbf{a}$ and all other $z_{i}=\mathbf{b}$ (for all $i>1$). It follows easily that the spectrum of $J$ consists of two eigenvalues,
namely, $j\mathbf{b}$ of multiplicity $\binom{l}{j}$ and $\mathbf{a}+(j-1)\mathbf{b}$ of multiplicity $\binom{l}{j-1}$. It follows that
$$\binom{l}{j}=\binom{l}{j-1},$$
i.e.,
$$\frac{l-j+1}{j}=1, \quad l-j+1=j, \quad l=2j-1.$$
It remains to put $m=j$ and we get that 
$$l=2m-1, \ j=m, \ 2g=\binom{2m}{m}.$$
Since $2m-1=l>3$, we  get
\begin{equation}
\label{mB3}
m \ge 3.
\end{equation}

Now it is natural to look at   the  structure of the $\sll(W)$-module $\wedge^2_{\BC}\left(\wedge_{\BC}^m(W)\right)$. We are going to apply results of Section \ref{semiLin} with
$$k=\BQ, \ K=\BC,\  \Aut(K/k)=\Aut(\BC),$$
$$\mathfrak{g}=\mathrm{hdg}_T, \ \bar{\mathfrak{g}}=\mathrm{hdg}_{T,\BC}, \
\mathcal{V}=\wedge_{\BQ}^2(\wedge_{\BQ}^m \Lambda_{\BQ}), \
\bar{\mathcal{V}}=\wedge_{\BC}^2(\wedge_{\BC}^m \Lambda_{\BC}).$$

Let us fix a Cartan subalgebra $\mathfrak{h}$ of the simple Lie $\BQ$-algebra $\mathrm{hdg}_T$, which is an $l$-dimensional $\BQ$-vector space. Then
$$\bar{\mathfrak{h}}=\mathfrak{h}\otimes_{\BQ}\otimes\BC$$
is a Cartan subalgebra of  the complex simple Lie algebra $\mathrm{hdg}_{T,\BC}$ that is an $l$-dimensional complex vector space endowed with the natural semi-linear $\Aut(\BC)$-action;
its subalgebra of invariants coincides with $\mathfrak{h}\otimes 1=\mathfrak{h}$.

 As in Section \ref{semiLin}, let  us consider the 
 dual complex vector space
 $$\bar{\mathfrak{h}}^{*}=\Hom_{\BC}(\bar{\mathfrak{h}},\BC).$$ 

 Let $$R\subset \overline{\mathfrak{h}}^{*}$$
 be the  root system of $(\mathrm{hdg}_{T,\BC}, \overline{\mathfrak{h}})$.

 Let us choose a simple root system $B$ of simple roots (basis) of $R$ and let
 $ P_{++}(R)\subset \overline{\mathfrak{h}}^{*}$ be the corresponding semigroup of dominant weights  \cite{BourbakiR}.

If $\mu \in P_{++}(R)$ then we write $\mathbf{V}(\mu)$ for the simple $\mathrm{hdg}_{T,\BC}$-module with highest weight $\mu$ \cite{Bourbaki78}.
In particular, $\mathbf{V}(0)$ stands for the one-dimensional $\bar{\BQ}$-vector space $\bar{\BQ}$ with trivial (zero) action of  $\mathrm{hdg}_{T,\BC}$.
Then the $\binom{2m}{m}$-dimensional $\bar{\BQ}$-vector space
$$\bar{\Lambda}=\Lambda_{\BQ}\otimes_{\BQ}\bar{\BQ}$$
becomes a simple $\mathrm{hdg}_{T,\BC}$-module that is isomorphic to $\mathbf{V}(\bar{\omega}_m)$. Hereafter we use  the notation of Bourbaki (\cite[Tables]{BourbakiR},
\cite[Tables]{Bourbaki78}).
In particular, 
$$B=\{\alpha_1, \dots,\alpha_l\}=\{\alpha_1, \dots,\alpha_{2m-1}\}$$
 (see Root systems of type ${\sf{A}}_l$ in \cite[Tables]{BourbakiR}), and 
$\bar{\omega}_{i}$ is the dominant weight of a fundamental representation of dimension $\binom{2m}{i}$ (when $1 \le i \le l=2m-1$),
see \cite[Table 2]{Bourbaki78}.
In addition, we put 
$$\bar{\omega}_0:=0=:\bar{\omega}_{2m}.$$
Notice that the only nontrivial automorphism of $(R, B)$ is the involution
$$\alpha_i \to \alpha_{2m-i} \quad \forall i=1, \dots, 2m-1=l.$$
Hence, each dominant weight $\bar{\omega}_{m+i}+\bar{\omega}_{m-i}$ is  $\Aut(R, \mathbf{B})$-invariant for all $i=0, \dots, 2m$.

It follows from results of \cite[p. 140, Example 9a, last displayed formula]{Mc} (see also
\cite[Exercises 6.16 on p. 81 and 15.32 on p. 226]{FH}) that the $\bar{\mathfrak{g}}=\mathrm{hdg}_{T,\BC}$-module
$$\bar{\mathcal{V}}=\wedge^2_{\BC}(\mathbf{V}(\bar{\omega}_m))$$ is isomorphic to a direct sum
\begin{equation}
\label{partition}
\oplus_{i \ \text{odd}, \ 1 \le i\le m} \mathbf{V}(\bar{\omega}_{m+i}+\bar{\omega}_{m-i}).
\end{equation}
This implies that  the $\bar{\mathfrak{g}}$-module $\bar{\mathcal{V}}$
splits into  a direct sum of mutually non-isomorphic  simple $\bar{\mathfrak{g}}$-modules;
one of them is trivial if and only if $m$ is odd (one should take $i=m$ in order to get the summand $\mathbf{V}(0)$.)

Let $\mathcal{W}$ be a simple $\bar{\mathfrak{g}}$-submodule of $\bar{\mathcal{V}}$. Let $\lambda_{\mathcal{W}}$ be the highest weight of $\mathcal{W}$. We know that $\lambda_{\mathcal{W}}$ is $\Aut(R,B)$-invariant.  It follows from 
Theorem \ref{mainLie}  combined with Proposition \ref{sigmaW} that the simple $\bar{\mathfrak{g}}$-submodules 
$\mathcal{W}$ and $\sigma(\mathcal{W})$ have the same highest weight and therefore are isomorphic. This implies that
$$\sigma{\mathcal{W}}=\mathcal{W} \ \forall \sigma \in \Aut(\BC).$$
By Lemma \ref{subspace}, $\mathcal{W}$ is defined over $\BQ$, i.e., there is a $\BQ$-vector subspace $\mathbf{W}$ of $\mathcal{V}$ such that
$$\mathcal{W}=\mathbf{W} \otimes_{\BQ}\BC.$$
Clearly, such $\mathbf{W}$ is a simple $\mathrm{hdg}_T$-submodule of $\mathcal{V}$.  
It follows from \eqref{partition}  that the $\mathrm{hdg}_T$-module $\mathcal{V}$ splits into a direct sum
\begin{equation}
\label{partition0}
\oplus_{i \ \text{odd}, \ 1 \le i\le m} \mathbf{W}_i.
\end{equation}
of $\mathrm{hdg}_T$-modules such that 
$$\mathbf{V}(\bar{\omega}_{m+i}+\bar{\omega}_{m-i})\cong \mathbf{T}_{\BQ,\BC}(\mathbf{W}_i)=\mathbf{W}_i\otimes_{\BQ}\BC.$$
This implies that all $\mathbf{W}_i$ are mutually non-isomorphic simple  $\mathrm{hdg}_T$-modules. In adddition, one of them is trivial if and only if $m$ is odd. (Namely, if $m$ is odd then 
$\mathbf{W}_m$ is a trivial  $\mathrm{hdg}_T$-module of $\BQ$-dimension $1$.)

Thus, if $m$ is {\sl even},  then the $\mathrm{hdg}_T$-module $\mathcal{V}$ splits into a direct sum of $(m/2)$ simple modules, none of which is trivial. If $m$ is {\sl odd},  then the $\mathrm{hdg}_T$-module $\mathcal{V}$ splits into a direct sum of $(m+1)/2$ simple modules, and precisely one of them is trivial.  It follows that $\mathrm{hdg}_T$-module $\mathcal{V}$ is simple if and only if $m=2$.    Since  $m \ge 3$ \eqref{mB3}, we conclude that 
$\mathcal{V}$ is never simple. On the other hand, it's clear that 
$\mathcal{V}$ is a direct sum of a simple $\mathrm{hdg}_T$-module and a trivial one if and only if $m=3$.

Recall that we are actually interested in the dual $\mathrm{hdg}_T$-module
$$\mathrm{H}^2(T,\BQ)=\Hom_{\BQ}(\mathcal{V},\BQ).$$
By duality, the $\mathrm{hdg}_T$-module is never simple; it is a direct sum of a simple $\mathrm{hdg}_T$-module and a trivial one if and only if $m=3$. Now the 2-simplicity of $T$ implies that $m=3$ and therefore 
$$2g=\binom{2\cdot 3}{3}=20,$$
i.e., $g=10$.
\end{proof}

\end{document}